\DeclareMathOperator{\Aut}{Aut}
\DeclareMathOperator{\Ann}{Ann}
\DeclareMathOperator{\Fix}{Fix}
\DeclareMathOperator{\Ker}{Ker}
\DeclareMathOperator{\Soc}{Soc}
\DeclareMathOperator{\op}{op}
\newcommand{\PS}{\textnormal{(}$\operatorname{S}$\textnormal{)}\xspace}
\newcommand{\ps}{\textnormal{(}$\operatorname{s}$\textnormal{)}\xspace}
\newcommand{\PBS}{\textnormal{(}$\operatorname{BS}$\textnormal{)}\xspace}
\numberwithin{equation}{section}
\numberwithin{figure}{section}
\numberwithin{table}{section}
\newtheorem{thm}{Theorem}[section]
\newtheorem*{thm*}{Theorem}
\newtheorem{lem}[thm]{Lemma}
\newtheorem{cor}[thm]{Corollary}
\newtheorem{pro}[thm]{Proposition}
\theoremstyle{definition} 
\newtheorem{defn}[thm]{Definition}
\newtheorem{notation}[thm]{Notation}
\newtheorem{question}[thm]{Question}
\newtheorem{rem}[thm]{Remark}
\newtheorem{exa}[thm]{Example}
\definecolor{orcidlogocol}{HTML}{A6CE39}
\title[Derived-indecomposable solutions of the YBE]{On derived-indecomposable solutions of the Yang--Baxter equation}
\author{I. Colazzo}
\author{M. Ferrara}
\author{M. Trombetti}
\address{Department of Mathematics, College of Engineering, Mathematics and Physical Sciences, University of Exeter, Exeter EX4 4QF, UK\\
ORCID: 0000-0002-2713-0409}
\email{I.Colazzo@exeter.ac.uk}
\address{Dipartimento di Matematica e Fisica, Università degli Studi della Campania “Luigi Vanvitelli”, viale Lincoln 5, Caserta, Italy\\
ORCID: 0000-0001-6002-327X}
\email{maria.ferrara1@unicampania.it}
\address{Dipartimento di Matematica e Applicazioni, Università degli Studi di Napoli Federico II, Complesso Universitario Monte S. Angelo, Via Cintia, 80126, Naples, Italy
\\
ORCID: 0000-0003-4532-3690}
\email{marco.trombetti@unina.it}
\begin{document}

\maketitle

\begin{abstract}
If $(X,r)$ is a finite non-degenerate set-theoretic solution of the Yang--Baxter equation, the additive group of the structure skew brace $G(X,r)$ is an $FC$-group, \hbox{i.e.} a group whose elements have finitely many conjugates. Moreover, its multiplicative group is virtually abelian, so it is also close to being an $FC$-group itself. If one additionally assumes that the derived solution of $(X,r)$ is indecomposable, then for every element $b$ of $G(X,r)$ there are finitely many elements of the form $b*c$ and $c*b$, with $c\in G(X,r)$. 
This naturally leads to the study of a brace-theoretic analogue of the class of $FC$-groups. For this class of skew braces, the fundamental results and their connections with the solutions of the YBE are described: we prove that they have good torsion and radical theories, and that they behave well with respect to certain nilpotency concepts and finite generation. 
\end{abstract}

\medskip
\medskip

\noindent 2020 {\it Mathematics Subject Classification}. Primary:16T25; Secondary: 16N99,\\ 81R50, 20F24, 08A05.\\
{\it Key words and phrases.} Yang--Baxter equation, indecomposable solution, skew brace, $FC$-group

\section{Introduction}

\noindent The Yang--Baxter equation (YBE) is one of the fundamental equations of physics. It takes its name from the independent works of the physicists Chen-Ning Yang \cite{Yang} and Rodney Baxter \cite{Baxter}. It turns out that this equation plays a relevant role in many different subjects such as knot theory, braid theory, operator theory, Hopf algebras, quantum groups, $3$-manifolds and the monodromy of differential equations. 

A {\it solution} of the YBE is a pair $(V,R)$, where $V$ is a vector space and $R$ is a linear map $R:\, V\otimes V\longrightarrow V\otimes V$ such that $$(R\otimes\operatorname{id})(\operatorname{id}\otimes R)(R\otimes\operatorname{id})=(\operatorname{id}\otimes R)(R\otimes\operatorname{id})(\operatorname{id}\otimes R).$$ At the present time, we are far from being able to provide a full classification of the solutions of the YBE. However, in recent years, there has been an increasing interest in the so-called {\it set-theoretic} (or {\it combinatorial}) solutions of the YBE, \hbox{i.e.} those solutions $(V,R)$ such that $R$ is induced by linear extensions of a bijective map $$r:\, X\times X\longrightarrow X\times X,$$ where $X$ is a basis of $V$ (see \cite{Drinfeld}); in this case, also the pair $(X,r)$ is called a {\it set-theoretic} (or {\it combinatorial}) solution.

Finding all solutions of the YBE is hard, but one strategy to tackle this problem is to study solutions that cannot be further reduced to smaller ones (see \cite{ESG01}). A solution $(X,r)$  is \emph{decomposable} if there is a partition of $X$ into two disjoint non-empty subsets $X_1$ and $X_2$ such that
$r(X_1 \times X_1)\subseteq X_1 \times X_1$, and $r(X_2 \times X_2)\subseteq X_2 \times X_2$; otherwise $(X,r)$ is \emph{indecomposable}.
Many authors have focused on studying and classifying such solutions with a special emphasis on those which are also {\it involutive} and {\it non-degenerate}, i.e. solutions $(X,r)$ such that $r^2=\operatorname{id}$, and if we write $$r(x,y)=(\sigma_x(y),\tau_y(x))$$ then the maps $\sigma_x$ and $\tau_y$ are bijective, for all $x,y\in X$; see \cite{camp2021criterion, castelli2021classification,CaMaSt22,CaPiRu20,ESG01,JePiZD22,LeRaVe2022x,ramirez2021decomposition,Ru05,Ru20,Ru22,SmSm18}. On the other hand, almost nothing is known about indecomposable solutions which are non-degenerate and non-involutive (see \cite{Cedo,LeVe19,Ra22x}). A particular family of non-degenerate and non-involutive indecomposable solutions is that given by those whose derived solution is indecomposable; in the following, we refer to these as {\it derived-indecomposable solutions} (see \cref{defderivedindecomposable}).

The aim of this paper is to provide algebraic tools to study derived-in\-de\-com\-po\-sable solutions. This is accomplished through the study of the structure skew brace $G(X,r)$ associated with a solution $(X,r)$ of the YBE.

A skew brace is essentially a set endowed with two group structures, linked together with a ``distributivity-like'' relation (see \cite{Okninski,OriginalBrace,Rump2} and next section for precise definitions). If $(X,r)$ is a non-degenerate solution, the skew brace controlling its structure is the {\it structure skew brace} $G(X,r)$ \cite{DopoRump,OriginalBrace,Soloviev}.
Understanding the structure of $G(X,r)$ makes it possible to better understand the structure of the corresponding solution, and this is the reason why the basic theory of skew braces has recently been the subject of many papers (see, for example, \cite{Bonatto,MR4457900,Cedo,MR4256133,JeVAV22x} and the references therein). Of course, if $(X,r)$ is a solution and $X$ is finite, then the multiplicative group and the additive group of $G(X,r)$ are finitely generated, but in general they are both infinite groups. However, in this case, it also turns out that the multiplicative group is virtually abelian (see, for instance, \cite[Corollary 7.2]{LeVe19}), while the additive group is finite over its centre (see \cite[Theorem 2.7]{structureMonoid}). Central-by-finite groups  are very special cases of an intensively studied class of groups: the class of {\it $FC$-groups}, \hbox{i.e.} the class of groups whose elements have finitely many conjugates (see the monograph \cite{Tomkinson}). This class of groups generalises the class of abelian groups and that of finite groups, and in fact it shares many good properties with these two classes of groups. Due to its nice behaviour, the class of $FC$-groups has often been employed to study very difficult problems in various contexts. For example, Seghal, Zassenahaus \cite{Zassenhaus} and Polcino Milies \cite{pulcino,pulcino2,pulcino1} gave a detailed description of the group rings whose group of invertible elements is an $FC$-group.

Starting from the above remarks, one naturally defines classes of skew braces that behave like $FC$-groups (see Definitions~\ref{deffc} and \ref{defbfc}) and it turns out they control derived-indecomposable solutions of the YBE (see Theorems~\ref{indecomposableprima} and \ref{finalthmBrace}). In addition, 
this class of skew braces shares similarities with the classes of finite and trivial skew braces (see Theorems~\ref{periodicreduction} and \ref{indecomposableprima}), and although it is much bigger, it can still be described quite well (compare with, for instance, our Theorem~\ref{BaerThm} and \cite[Theorem 5.4]{MR4256133}). Moreover, here one can deal with finite generation (see Theorem~\ref{theoremfg}), torsion concepts (see Theorem~\ref{theoperiodicfc}), radical theory (see Theorem \ref{radical}) and (under some mild additional hypotheses) with certain nilpotency concepts (see, for instance, Theorem~\ref{LN}). 

The layout of the paper is the following. In Section~\ref{secpre}, we give basic results concerning skew braces and $FC$-groups. In Section~\ref{fcgroups}, we introduce and describe skew braces with property \PS and their connections with the YBE. In Section~\ref{bfc}, we prove an analogue of a theorem of Bernard Neumann for skew braces having the property \PBS and show how these skew braces naturally arise in the context of derived-indecomposable solutions of the YBE.

\section{Definitions and preliminaries}\label{secpre}

\noindent In this section we give the necessary background on skew braces and on $FC$-groups.

\subsection{Skew braces}

\noindent Let $B$ be a set. If $(B,+)$ and $(B,\circ)$ are (not necessarily abelian) groups, then the triple $(B,+,\circ)$ is a {\it skew \textnormal(left\textnormal) brace} if the {\it skew \textnormal(left\textnormal) distributive law} $$a\circ(b+c)=a\circ b-a+a\circ c$$ holds for all $a,b,c\in B$. Now, let $(B,+,\circ)$ be a skew brace. We refer to $(B,+)$ as the \emph{additive group} of $B$ and to $(B,\circ)$ as the \emph{multiplicative group} of $B$. We denote by $0$ the identity of $(B,+)$, by $1$ the identity of $(B,\circ)$,  by $-a$ and $a^{-1}$ the inverses of $a$ in $(B,+)$ and $(B,\circ)$, respectively, and, if $n \in \mathbb{N}$, then $na = a+a+ \cdots + a$ ($n$ times) and $a^{n}=a\circ a \circ \cdots \circ a$ ($n$~times). The skew distributive law easily implies $0=1$. It should be also noticed that the map $$\lambda:\, a\in B\mapsto \left(\lambda_a:\, b\mapsto \lambda_a(b)=-a+a\circ b\right)\in\Aut(B,+)$$ is a group homomorphism from $(B,\circ)$ to $\Aut(B,+)$ and the following relations hold $$a+b=a\circ\lambda_a^{-1}(b),\quad a\circ b=a+\lambda_a(b),\quad -a=\lambda_a\left(a^{-1}\right).$$ 

\indent In analogy with ring theory, a third relevant (not necessarily associative) operation in skew braces is defined as follows $$a\ast b=\lambda_a(b)-b=-a+a\circ b-b$$ and one can easily check that it satisfies the relations 
\[
\begin{array}{c}
a \ast (b + c) = a \ast b + b + a \ast c - b,\\[0.2cm]
(a \circ b) \ast c=a \ast (b \ast c) + b \ast c + a \ast c,
\end{array}
\] 
for all $a,b,c\in B$. If the additive group of $B$ is abelian, we simply call $B$  a {\it \textnormal{(}left\textnormal{)} brace}, or, a {\it skew \textnormal{(}left\textnormal{)} brace of abelian type}. If $(G,\cdot)$ is any group, then $(G,\cdot,\cdot)$ is a skew brace called a {\it trivial skew brace} and $(G,\cdot^{\op},\cdot)$ is a skew brace called an \emph{almost trivial skew brace}; if $(G,\cdot)$ is abelian, then the trivial skew brace and the almost trivial skew brace coincide and we simply speak of a {\it trivial brace}.

If we consider the natural semidirect product $G=(B,+)\rtimes(B,\circ)$, where \[
\begin{array}{c}
(a,b)(c,d)=(a + \lambda_b(c),b\circ d)
\end{array}
\] for all $a,b,c,d\in B$, then an easy computation shows that the operation $\ast$ corresponds to a commutator of type \[
\begin{array}{c}\label{equationfinale}
\left[(0,a),(b,0)\right]=(a\ast b,0),\tag{$\star$}
\end{array}
\] for all $a,b\in B$ (note that our convention for commutators in a group $(G,\cdotp)$ is $[x,y]=xyx^{-1}y^{-1}$).

A {\it left ideal} of a skew brace $B$ is a subgroup $I$ of $(B,+)$ such that $\lambda_a(I)\subseteq I$ for all $a\in B$; this is equivalent to $B\ast I\subseteq I$, so $I$ is also a subgroup of $(B,\circ)$. An {\it ideal} is a left ideal that is normal in $(B,+)$ and $(B,\circ)$ (note that the last condition is equivalent to demanding that $I\ast B\subseteq I$); in this case, it is known that $B/I$ is a skew brace and $a+I=a\circ I$ for all $a\in B$. A skew brace is {\it simple} if it has no proper non-zero ideals.

The {\it socle} of $B$ is defined as $\Soc(B)=\operatorname{Ker}(\lambda)\cap Z(B,+)$ and the {\it annihilator} (see \cite{catino}) of $B$ is defined as $\operatorname{Ann}(B)=\operatorname{Soc}(B)\cap Z(B,\circ)$, where $Z(B,+)$ and $Z(B,\circ)$ are the centre of $(B,+)$ and $(B,\circ)$, respectively. Moreover, we let $B^{(2)}=B\ast B$ be the subgroup of $(B,+)$ generated by all elements of the form $a\ast b$ for all $a,b\in B$. It can be proved that $\operatorname{Soc}(B)$, $\operatorname{Ann}(B)$ and $B^{(2)}$ are ideals. In connection with $B^{(2)}$, we further observe that $B$ is a trivial skew brace if and only if $B^{(2)}=\{0\}$.


Finally, we illustrate the connection between skew braces and solutions of the YBE. Let $B$ be a skew brace and let $$r_B:\, (a,b)\in B\times B\mapsto \left(\lambda_a(b),\,\lambda_a(b)^{-1}\circ a\circ b\right)\in B\times B.$$ Then $(B,r_B)$ is a non-degenerate solution of the YBE. Conversely, if $(X,r)$ is a non-degenerate solution of the YBE, then there is a unique skew brace structure over the structure group $$G(X,r)=\langle X\, |\, xy=\sigma_x(y)\tau_y(x),\; x,y\in X\rangle$$ such that $r_{G(X,r)}(\iota\times\iota)=(\iota\times\iota)r$, where $\iota:\, X\longrightarrow G(X,r)$ is the canonical map. The multiplicative group of this skew brace is $G(X,r)$ and the additive is $$A(X,r)=\langle X\,|\, x+\sigma_x(y)=\sigma_x(y)+\sigma_{\sigma_x(y)}\big(\tau_y(x)\big),\;\textnormal{for all}\; x,y\in X\rangle$$ (see \cite{Zhu,Soloviev}). We refer to this skew brace as the {\it structure skew brace} of $(X,r)$ \cite[Theorem 3.9]{OriginalBrace}.

\subsection{{\it FC}-groups}

Let $G$ be a group. An element $x$ of $G$ is an {\it $FC$-element} of $G$ if it has finitely many conjugates in $G$, or, equivalently, if $|G:C_G(x)|$ is finite; the latter condition is also equivalent to requiring that the normal core $\left(C_G(x)\right)_G$ of the centralizer $C_G(x)$ of $x$ in $G$ has finite index in $G$. The set of all $FC$-elements of $G$ is a (characteristic) subgroup of $G$ which is called the {\it $FC$-centre} of $G$ and is usually denoted by $FC(G)$. The group $G$ is an {\it $FC$-group} if $G=FC(G)$. Natural examples of $FC$-groups are restricted direct products of finite groups (unrestricted direct products of infinitely many non-abelian finite groups are not $FC$-groups); groups with a finite commutator subgroup (or, by a well-known theorem of Schur, groups which are finite over their centre); abelian groups. $FC$-groups have been introduced by Reinhold Baer, and they have been studied by many authors, including Gor\v cakov, Hall, Neumann and, more recently, Kurdachenko and Tomkinson. In this subsection we summarise some of the main results concerning the class of $FC$-groups; we refer to \cite{Tomkinson} as a general reference on the subject.

A finitely generated $FC$-group is finite over its centre. Actually, in an arbitrary $FC$-group $G$, the factor group $G/Z(G)$ is locally finite, \hbox{i.e.} every finite subset is contained in a finite subgroup; thus, it follows from a well-known theorem of Schur that $G'$ is locally finite and hence the subset of all periodic elements of $G$ is a subgroup. It turns out that every $FC$-group can be embedded in a direct product of a torsion-free abelian group and a locally finite $FC$-group, so, in order to prove many properties of $FC$-groups, it is often sufficient to prove them in the locally finite case. Furthermore, the locally finite case is a very good case, because {\it Dietzmann's lemma} holds for $FC$-groups: every finite subset of a periodic $FC$-group is contained in a finite normal subgroup.

\section{Skew braces with property (S)}\label{fcgroups}

\noindent In this section, we introduce property (S) for skew braces, and we study the basic theory of skew braces with such a property. The main results describe the finite generation (see \cref{theoremfg}) and the torsion theory (see \cref{theoperiodicfc}) of skew braces satisfying (S). Exploiting these results we highlight relations between this class of skew braces and solutions of the YBE. At the end of the section, nilpotency concepts and radical theory are both studied.

\smallskip

Let $B$ be a skew brace and let $x\in B$. Let $\Fix^r_B(x)=\{b\in B\,:\, x\ast b=0\}$; if there is no ambiguity, we simply write $\Fix^r(x)$. Clearly, $\Fix^r(x)$ is a subgroup of $(B,+)$ and its index is the cardinal number of the set $\{x \ast b\,:\, b\in B\}$ since the assignment $$b+\Fix^r(x)\mapsto x \ast b$$ defines a bijective correspondence between the set of all right cosets of $\Fix^r(x)$ in $(B,+)$ and the set $\{x \ast b\,:\, b\in B\}$. 

Similarly, we define $\Fix^l_B(x)=\Fix^l(x)=\{b\in B\,:\,b\ast x=0\}$. It is easy to see that $\Fix^l(x)$ is a subgroup of $(B,\circ)$ and its index is the cardinal number of the set $\{b\ast x\,:\, b\in B\}$ since the assignment $$b\circ\Fix^l(x)\mapsto b\ast x$$ defines a bijective correspondence between the set of all right cosets of $\operatorname{Fix}^l(x)$ in~$(B,\circ)$ and the set  $\{b\ast x\,:\, b\in B\}$.

Moreover, let $C_x^{(B,+)}=C_x^+$ and $C_x^{(B,\circ)}=C_x^\circ$ be the centralisers of $x$ in $(B,+)$ and $(B,\circ)$, respectively. 
Then $C_x^+\cap\Fix^r(x)$ is a subgroup of $(B,+)$ and $C_x^\circ\cap\Fix^l(x)$ is a subgroup of $(B,\circ)$. Finally, we define the {\it annihilator} $\Ann_B(x)$ of \emph{an element}~$x$ in $B$ as $$\Ann_B(x)=\Ann(x)=\Fix^r(x)\cap C_x^+\cap C_x^\circ\cap\Fix^l(x).$$ 

\begin{rem}\label{remref}
Note that if $c\in \Fix^l(x)\cap\Fix^r(x)$, then $c\in C_x^+$ if and only if $c\in C_x^\circ$. Thus, $\Ann(x)$ could have been defined as $\Fix^l(x)\cap\Fix^r(x)\cap C_x^+$ or as $\Fix^l(x)\cap\Fix^r(x)\cap C_x^\circ$.
\end{rem}

\begin{lem}\label{remref2}
Let $B$ be a skew brace and let $x\in B$. If $b\in C_x^\circ\cap\Fix^r(x)$, then $b\circ a\in\Fix^r(x)$ if and only if $a\in\Fix^r(x)$.
\end{lem}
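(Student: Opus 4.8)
The plan is to reformulate membership in $\Fix^r(x)$ as a fixed-point condition for $\lambda_x$ and then exploit the multiplicativity of $\lambda$. Observe first that, since $x \ast b = \lambda_x(b) - b$, one has $\Fix^r(x) = \{b \in B : \lambda_x(b) = b\}$; that is, $\Fix^r(x)$ is precisely the set of fixed points of the additive automorphism $\lambda_x$. Under this reading the assertion to be proved becomes: if $b$ is fixed by $\lambda_x$ and commutes with $x$ in $(B,\circ)$, then $b \circ a$ is fixed by $\lambda_x$ if and only if $a$ is.

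The key computation I would carry out is an expansion of $\lambda_x(b \circ a)$. Using $b \circ a = b + \lambda_b(a)$ together with the additivity of $\lambda_x$ and the fact that $\lambda$ is a homomorphism from $(B,\circ)$ into $\Aut(B,+)$ (so that $\lambda_x\lambda_b = \lambda_{x \circ b}$), I obtain
\[
\lambda_x(b \circ a) = \lambda_x(b) + \lambda_{x \circ b}(a).
\]
Now I would feed in the two hypotheses on $b$: since $b \in \Fix^r(x)$ we may replace $\lambda_x(b)$ by $b$, and since $b \in C_x^\circ$ we have $x \circ b = b \circ x$, whence $\lambda_{x\circ b} = \lambda_{b \circ x} = \lambda_b\lambda_x$. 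This yields $\lambda_x(b \circ a) = b + \lambda_b\big(\lambda_x(a)\big)$.

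Finally, I would compare this with the plain expansion $b \circ a = b + \lambda_b(a)$. The membership $b \circ a \in \Fix^r(x)$ is exactly the equality $b + \lambda_b\big(\lambda_x(a)\big) = b + \lambda_b(a)$; cancelling $b$ on the left in the group $(B,+)$ and applying the injectivity of the automorphism $\lambda_b$, this is equivalent to $\lambda_x(a) = a$, i.e. to $a \in \Fix^r(x)$, which completes the argument. The only point requiring care is that $(B,+)$ need not be abelian, so every cancellation must be performed on the correct side; since the surviving term $b$ appears as a left summand in both expressions and $\lambda_b$ is a bijection, left cancellation and injectivity apply cleanly, and I do not expect any genuine obstacle here.
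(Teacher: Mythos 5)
Your proof is correct, and it takes a genuinely different route from the paper's. The paper works entirely at the level of the $\ast$ operation: it expands $x\circ b\circ a$ in two ways (once directly, once after swapping $x\circ b = b\circ x$) using the relation $u\circ v = u + u\ast v + v$ and the distributive identities for $\ast$, arriving after a longish chain of additive manipulations at $x + x\ast(b\circ a) + b\circ a = x + b\circ(x\ast a + a)$, from which the equivalence $x\ast(b\circ a)=0 \iff x\ast a = 0$ is read off by cancellation in $(B,\circ)$ and $(B,+)$. You instead reformulate $\Fix^r(x)$ as the fixed-point set of the automorphism $\lambda_x$ and push everything through the homomorphism property of $\lambda$: the identity $\lambda_x(b\circ a) = \lambda_x(b) + \lambda_{x\circ b}(a)$, combined with $\lambda_x(b)=b$ and $\lambda_{x\circ b} = \lambda_{b\circ x} = \lambda_b\lambda_x$, reduces the claim to injectivity of $\lambda_b$ plus left cancellation in $(B,+)$. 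The two arguments use the same underlying ingredients (commutativity of $x$ and $b$ in $(B,\circ)$, and the multiplicativity of $\lambda$, which is equivalent to the second $\ast$-identity), but yours is shorter and more transparent: the equivalence drops out in one line rather than being extracted from a chain of eight equalities, and each hypothesis enters at a visibly labelled point. What the paper's version buys in exchange is that it never leaves the language of $\ast$, the operation in which the lemma and its later applications (e.g.\ Theorem~\ref{annsubgroup}) are phrased. Your care about one-sided cancellation in the possibly non-abelian group $(B,+)$ is exactly right and is the only subtle point; it is handled correctly.
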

\begin{proof}
Note that $$
\begin{array}{c}
x+x\ast (b\circ a)+b\circ a=x\circ b\circ a=b\circ x\circ a=b\circ x+(b\circ x)\ast a+a\\[0.2cm]
=x\circ b+(b\circ x)\ast a+a
=x+b+b\ast(x\ast a)+x\ast a+b\ast a+a\\[0.2cm]
=x+b+b\ast(x\ast a)+x\ast a-b+b\circ a
=x+b\circ(x\ast a)-b+b\circ a\\[0.2cm]
=x+b\circ (x\ast a+a).
\end{array}
$$ Thus, $x\ast a=0$ if and only if $x\ast(b\circ a)=0$ and the statement is proved.
\end{proof}

\begin{thm}\label{annsubgroup}
Let $B$ be a skew brace. If $x\in B$, then $\Ann(x)$ is a subgroup of~$(B,\circ)$.
\end{thm}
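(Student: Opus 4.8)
The plan is to collapse the four-fold intersection into something manifestly closed under $\circ$ and then deal by hand with the single remaining troublesome factor, $\Fix^r(x)$. The key starting observation is that two of the four constituents, namely $\Fix^l(x)$ and $C_x^\circ$, are already subgroups of $(B,\circ)$, so their intersection $\Fix^l(x)\cap C_x^\circ$ is a subgroup of $(B,\circ)$ with no further work. Invoking \cref{remref}, I would rewrite
\[
\Ann(x)=\Fix^l(x)\cap\Fix^r(x)\cap C_x^+\cap C_x^\circ=\bigl(\Fix^l(x)\cap C_x^\circ\bigr)\cap\Fix^r(x),
\]
so that the additive centralizer $C_x^+$ disappears from the bookkeeping entirely. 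The whole problem then reduces to showing that intersecting the $\circ$-subgroup $\Fix^l(x)\cap C_x^\circ$ with the additive subgroup $\Fix^r(x)$ still yields a subgroup of $(B,\circ)$.

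Next I would verify the subgroup criterion directly, with \cref{remref2} as the single nontrivial ingredient. First note $0=1\in\Ann(x)$ (since $x\ast 0=0$, and $0$ centralizes and fixes $x$ on both sides), so the set is non-empty. For closure, take $a,b\in\Ann(x)$; because $\Fix^l(x)\cap C_x^\circ$ is a $\circ$-subgroup, $a\circ b$ automatically lies in it, so only $a\circ b\in\Fix^r(x)$ needs checking. Applying \cref{remref2} with $a$ in the role of the element of $C_x^\circ\cap\Fix^r(x)$ and $b$ as the free element gives $a\circ b\in\Fix^r(x)$ if and only if $b\in\Fix^r(x)$, which holds. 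For inverses, take $a\in\Ann(x)$; then $a^{-1}\in\Fix^l(x)\cap C_x^\circ$ automatically, and since $a\circ a^{-1}=1=0\in\Fix^r(x)$, another application of \cref{remref2} (again with $a$ in the distinguished role and now $a^{-1}$ as the free element) forces $a^{-1}\in\Fix^r(x)$. Hence both $a\circ b$ and $a^{-1}$ lie back in $\Ann(x)$, and the criterion is met.

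The only genuine difficulty here is conceptual rather than computational: $\Fix^r(x)$ and $C_x^+$ are defined as subgroups of the additive group $(B,+)$, and there is no a priori reason for them to behave well under the operation $\circ$. \cref{remref2} is precisely the bridge that controls how $\Fix^r(x)$ transforms under $\circ$-multiplication by elements of $C_x^\circ\cap\Fix^r(x)$, while \cref{remref} is what lets me trade the awkward additive centralizer $C_x^+$ for the multiplicative $C_x^\circ$ at no cost. With these two facts in hand the argument is essentially immediate; without them one would have to unwind the skew distributive law directly, exactly as in the proof of \cref{remref2} itself.
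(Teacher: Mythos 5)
Your proof is correct and follows essentially the same route as the paper: both use \cref{remref} to eliminate $C_x^+$ in favour of $C_x^\circ$ and then invoke \cref{remref2} to get closure of $\Fix^r(x)$ under $\circ$-products and inverses (the paper merely groups the intersection as $\Fix^l(x)\cap\bigl(\Fix^r(x)\cap C_x^\circ\bigr)$ instead of $\bigl(\Fix^l(x)\cap C_x^\circ\bigr)\cap\Fix^r(x)$, which is an immaterial difference). No gaps.
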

\begin{proof}
By Remark \ref{remref}, we have $$\Ann(x)=\Fix^l(x)\cap\Fix^r(x)\cap C_x^\circ.$$ Since we already know that $\Fix^l(x)$ is a subgroup of $(B,\circ)$, we only need to show that $A(x)=\Fix^r(x)\cap C_x^\circ$ is a subgroup of $(B,\circ)$. Let $b\in A(x)$. Since $b\circ b^{-1}=0\in\Fix^r(x)$, then $b^{-1}\in\Fix^r(B)$ by Lemma \ref{remref2}. Thus, $b^{-1}\in A$. Moreover, if $b,c\in A(x)$, then a further application of Lemma \ref{remref2} shows that $b\circ c\in\Fix^r(x)$. Therefore $b\circ c\in A(x)$, which shows that $A(x)$ is a subgroup of $(B,\circ)$ and completes the proof.
\end{proof}

\medskip

Note that in the case of a two-sided brace, the annihilator of an element is a subgroup of both the underlying additive and multiplicative structures.

The following three easy examples show that, in general, the annihilator of an element is not a normal subgroup of $(B,\circ)$ nor a subgroup of $(B,+)$.

\begin{exa}
    Let $(B,+)=\langle a,b,c\ | \ 3a = 2b=2c=c+b+a=0\rangle$ be the symmetric group on three elements and the
    multiplication be defined in such a way that $(B,\circ)=N\rtimes X$, where $N=\{0,a,2a\}$, $X=\{0,b\}$, $a\circ b=c$ and $2a\circ b=a+b$ (see \cite[Theorem 3.12]{AcBo20}). It is easy to check that $(B,+,\circ)$ is a skew brace. Moreover, note that also $(B,\circ)$ is the symmetric group on three elements, however $(B,+,\circ)$ is not the trivial skew brace (for example, $a+b\neq c = a\circ b$). Now, $\Ann(b)=\left\{0,b\right\}$ is not a left ideal, since it is not $\lambda$-invariant: $\lambda_a(b) = - a + a\circ b = a + a + c = a + b\notin \Ann(b)$. It is also clear that $\Ann(b)$ is a subgroup with respect both operations but it is not normal: $-a+b+a = a+b$ and $a^{-1}\circ b \circ a = c$.
\end{exa}

\begin{exa}
    Let $B$ be the trivial skew brace with additive and multiplicative groups isomorphic to the symmetric group on three elements $\langle a,b,c\ | \ 3a = 2b=2c=c+b+a=0\rangle$. One can check that $\Ann(b)=\left\{0,b\right\}$ is a subgroup of $(B,+)$ and $(B,\circ)$ and it is clearly $\lambda$-invariant, hence $\Ann(b)$ is a left ideal of $B$. On the other hand, $\Ann(b)$ is not a normal subgroup of $(B,+)$, which implies that it is not an ideal of $B$.
\end{exa}

\begin{exa}
    Let $B$ be the brace whose underlying additive group is isomorphic to \hbox{$\mathbb{Z}/2\mathbb{Z}\times \mathbb{Z}/4\mathbb{Z}$} and whose multiplication is defined by 
    \begin{align*}
        \begin{pmatrix}
            y_1\\
            z_1+2x_1 
        \end{pmatrix} \circ \begin{pmatrix} y_2\\ z_2+2x_2\end{pmatrix} = \begin{pmatrix}
        y_1 +y_2 +(x_1 +y_1 +z_1 +y_1z_1)z_2\\ 
        z_1 +2x_1 +2z_1y_2 +2(y_1 +x_1z_1)z_2 +z_2 +2x_2
        \end{pmatrix}
    \end{align*}
    for any $x_1,x_2,y_1,y_2,z_1,z_2\in\{0,1\}$ (see \cite[Theorem 3.1]{Ba18} for further detail).
    Then $\Ann(0,3) =\{(0,0), (1,1)\}$ is not a subgroup of $(B,+)$ since $(1,1)+(1,1) =(0,2)$, but is a subgroup $(B,\circ)$.
\end{exa}

\medskip

We chose to define $\Ann(x)$ as above because the annihilator of a skew brace behaves similarly to the centre of a group (see for instance \cite{Bonatto,MR4256133,JeVAV22x}). Indeed, if $g$ is any element of $\Ann(x)$, then $b+g=g+b=g\circ b=b\circ g$, for all $b\in B$, so that $b\ast g=0$ and $g\in\Fix^l(x)$; thus, \hbox{$\Ann(B)=\bigcap_{x\in B}\Ann(x)$.}

\begin{defn}\label{deffc}
An element $x\in B$ has the {\it property \ps} if 
$\big|(B,+):\operatorname{Fix}^r(x)\cap C_x^+\big|$ and $\big|(B,\circ):\operatorname{Fix}^l(x)\cap C_x^\circ\big|$ are finite.  A skew brace $B$ has the {\it property \PS} if all its elements have property \ps. 
\end{defn}

We chose to denote our property by ‘‘(S)’’ to prevent confusion between the group theoretical concept of $FC$-group and skew braces with the property (S). By doing so, a quick glance enables readers to distinguish whether we are focusing on the attributes of $FC$-groups or (S)-braces. 

Special cases of skew braces with property \PS are those skew braces in which~$B^{(2)}$ and $[B,B]_+$ are finite, since then also $[B,B]_\circ$ is finite. Every skew brace $B$ such that $|B/\Ann(B)|<\infty$ has property \PS by \cite[Theorem 5.4]{MR4256133}. Other natural examples of skew braces with property \PS can be constructed by considering the restricted direct product of (possibly infinitely many) finite skew braces. It is also straightforward to show that the class of skew braces with property \PS is closed with respect to forming sub-skew braces, homomorphic images and restricted direct products. On the other hand, not every skew brace has property \PS. In fact, if $B$ is any finite skew brace such that $\operatorname{Ann}(B)<B$ (such as, for instance, the skew brace in Example~\ref{annihilatornontrivial}), then the unrestricted direct product $C=\operatorname{Cr}_{i\in\mathbb{N}}B_i$ of infinitely many copies of $B$ (that is, $B_i\simeq B$, for every $i\in\mathbb{N}$) does not have property \PS: if $x\in B\setminus\operatorname{Ann}(B)$, then $(x)_{i\in\mathbb{N}}\in C$ has infinitely many conjugates. 

\begin{exa}\label{annihilatornontrivial}
    Let $B$ be the brace whose additive group is isomorphic to $\mathbb{Z}/4\mathbb{Z}$ and whose multiplication is defined by $x_1\circ x_2 = x_1+x_2+2x_1x_2$ (see \cite[Proposition 2.4]{Ba18}). Then the annihilator of $B$ is strictly contained in $B$, and is given by $\Ann(B)=\left\{0, 2\right\}$.
\end{exa}

\begin{notation}
    Let $B$ be a skew brace. If $x\in B$, any element of type $g\ast x$, $x\ast g$, $g\circ x\circ g^{-1}$, $g+x-g$, for some $g\in B$, will be referred to as a {\it conjugate} of $x$. 
\end{notation}

This terminology is justified by the fact that the operation $\ast$ corresponds to commutation in the semidirect product \hbox{$(B,+)\rtimes_\lambda(B,\circ)$}. Under this terminology, an \ps-element of a skew brace is just an element having finitely many conjugates.

It is a straightforward (but long) exercise to show that in a two-sided brace $B$, the set of all \ps-elements is an ideal. We state this fact as follows, only sketching the proof. 

\begin{pro}
Let $B$ be a two-sided brace. Then the set $S$ of all \ps-elements of~$B$ is an ideal of $B$.
\end{pro}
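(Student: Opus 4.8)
The plan is to exploit the fact that a two-sided brace is the same thing as a (Jacobson) radical ring: writing $a\cdot b=a\ast b$, the operation $\ast$ is associative and additive in each variable, and $(B,+)$ is abelian with $a\circ b=a+b+a\ast b$. Since $(B,+)$ is abelian, the centraliser $C_x^+$ equals $B$ and every additive conjugate $g+x-g$ equals $x$, so the only genuine conditions in property \ps are the finiteness of $\big|(B,+):\Fix^r(x)\big|$ and of $\big|(B,\circ):\Fix^l(x)\cap C_x^\circ\big|$. Using bi-additivity, the maps $b\mapsto x\ast b$ and $b\mapsto b\ast x$ are additive endomorphisms, so $x\ast B$ and $B\ast x$ are subgroups of $(B,+)$ whose orders equal $\big|(B,+):\Fix^r(x)\big|$ and $\big|(B,\circ):\Fix^l(x)\big|$, respectively. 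Thus I would first reduce the whole statement to the single characterisation: $x\in S$ if and only if $x\ast B$ and $B\ast x$ are both finite.

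The main point — and the step where two-sidedness is essential — is that the multiplicative centraliser condition is then automatic. Recalling that $\big|(B,\circ):\Fix^l(x)\cap C_x^\circ\big|$ is finite precisely when both $\Fix^l(x)$ and $C_x^\circ$ have finite index, I would compute, using $a\circ b=a+b+a\ast b$ together with associativity and bi-additivity of $\ast$, the identity
\[
g\circ x\circ g^{-1}=x+g\ast x+x\ast g^{-1}+g\ast\big(x\ast g^{-1}\big),
\]
valid for all $g,x\in B$ (one cancels the $g$-terms via $g+g^{-1}+g\ast g^{-1}=0$). If $x\ast B$ and $B\ast x$ are finite, the first two summands lie in the finite sets $B\ast x$ and $x\ast B$; writing $x\ast B=\{w_1,\dots,w_m\}$ with $w_i=x\ast b_i$, the third summand $g\ast\big(x\ast g^{-1}\big)$ lies in $\bigcup_i B\ast w_i=\bigcup_i (B\ast x)\ast b_i$ by associativity, again a finite set. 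Hence $g\circ x\circ g^{-1}$ takes only finitely many values, so $\big|(B,\circ):C_x^\circ\big|$ is finite. This establishes the characterisation. I expect this reduction to be the only non-routine part of the argument.

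It then remains to verify the ideal axioms for $S$, and each follows from associativity and bi-additivity of $\ast$ applied to the characterisation. For closure under addition I would use $(x+y)\ast B\subseteq x\ast B+B\ast y$… more precisely $(x+y)\ast B\subseteq x\ast B+y\ast B$ and $B\ast(x+y)\subseteq B\ast x+B\ast y$, together with the identities $(-x)\ast b=-(x\ast b)$ and $b\ast(-x)=-(b\ast x)$; since $(B,+)$ is abelian, $S$ is then an additive normal subgroup. For the left-ideal condition $B\ast S\subseteq S$, take $x\in S$ and $b\in B$: associativity gives $(b\ast x)\ast B=b\ast(x\ast B)$ and $B\ast(b\ast x)=(B\ast b)\ast x\subseteq B\ast x$, both finite, so $b\ast x\in S$; thus $S$ is a left ideal and, in particular, a subgroup of $(B,\circ)$.

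Finally, for normality of $S$ in $(B,\circ)$ I would invoke the equivalence recorded in the preliminaries, namely that for a left ideal this is equivalent to $S\ast B\subseteq S$: for $x\in S$ and $b\in B$, associativity gives $(x\ast b)\ast B=x\ast(b\ast B)\subseteq x\ast B$ and $B\ast(x\ast b)=(B\ast x)\ast b$, both finite, whence $x\ast b\in S$. Combining these facts, $S$ is a left ideal that is normal in both $(B,+)$ and $(B,\circ)$, i.e. an ideal. The conjugation identity and the observation that the multiplicative $FC$-condition comes for free are the crux; everything after that is the ``long but straightforward'' bookkeeping the statement alludes to.
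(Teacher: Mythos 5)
Your proof is correct, and it takes a genuinely different route from the paper's. The paper's own argument (explicitly only a sketch) stays in brace language and works conjugate-by-conjugate: it shows that each kind of conjugation map is additive in a two-sided brace --- for the multiplicative one, $(b_1-b_2)^{c,\circ}=b_1^{c,\circ}+(-b_1)^{c,\circ}$-type identities, whose proof via $2c^{-1}=c^{-1}\circ(-c)=(-c)\circ c^{-1}$ is where the right distributive law enters --- so a difference of two \ps-elements again has finitely many conjugates of each type; closure under $\lambda$ and normality in $(B,\circ)$ are declared to be ``similarly proved''. You instead pass to the radical-ring picture and isolate a structural fact the paper never states: the identity
\[
g\circ x\circ g^{-1}=x+g\ast x+x\ast g^{-1}+g\ast\bigl(x\ast g^{-1}\bigr)
\]
makes the multiplicative centraliser condition automatic once $x\ast B$ and $B\ast x$ are finite, so that $x\in S$ if and only if $x\ast B$ and $B\ast x$ are finite. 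That characterisation is what your approach buys: every ideal axiom, including the normality of $S$ in $(B,\circ)$ via $S\ast B\subseteq S$ (which the paper leaves to the reader), becomes a one-line consequence of associativity and bi-additivity of $\ast$, and the role of two-sidedness is pinpointed exactly (it is what gives bi-additivity, and then associativity via the identity $(a\circ b)\ast c=a\ast(b\ast c)+b\ast c+a\ast c$). The only cosmetic improvement I would ask for is a citation, or a two-line verification, of the fact that a two-sided brace carries this associative bi-additive $\ast$ (Rump's correspondence with Jacobson radical rings), since you lean on it from the first sentence; the paper's approach, by contrast, never needs to invoke that correspondence explicitly.
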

\begin{proof}
Since $B$ is a brace, we need to show that $S$ is a subgroup of $(B,+)$, a normal subgroup of $(B,\circ)$ and that $S$ is $\lambda$-invariant. We show that $S\leq (B,+)$, since the other statements can similarly be proved.

Let $b_1,b_2\in S$. We need to show that $b_1+b_2\in S$. Since $$(b_1-b_2)^{c,+}=b_1^{c,+}-b_2^{c,+},$$ for all $c\in B$, we have that the set $\big\{(b_1-b_2)^{c,+}\,:\, c\in B\big\}$ is finite. Now, note that for all $c\in B$ we have $$c^{-1}=c^{-1}\circ (c-c)=-c^{-1}+c^{-1}\circ(-c)\implies 2c^{-1}=c^{-1}\circ (-c)$$ and similarly $$2c^{-1}=(-c)\circ c^{-1}.$$ This implies that $$0=c\circ(b_1-b_1)\circ c^{-1}=b_1^{c,\circ}+(-b_1)^{c,\circ}$$ and hence that the set $\big\{(-b_1)^{c,\circ}\,:\, c\in B\}$ is finite. Therefore $$(b_1-b_2)^{c,\circ}=b_1^{c,\circ}+(-b_2)^{c,\circ}$$ assumes finitely many values, as $c$ ranges in $B$. Finally, the fact that the sets $$\{c\ast (b_1-b_2)\,:\, c\in B\}\quad\textnormal{and}\quad\{(b_1-b_2)\ast c\,:\, c\in B\}$$ are finite depend upon the distributive laws. Thus, $b_1-b_2\in S$, and we are done.
\end{proof}

However, we are not able to establish if this is the case also for skew braces.

\medskip

Let $B$ be a skew brace with property \PS. Put $G=N\rtimes X$, where $N=(B,+)$, $X=(B,\circ)$, and  the action of $X$ on $N$ is given by $\lambda$. A direct consequence of the definitions is that both $X$ and $N$ are $FC$-groups, but $G$ is also an $FC$-group: to see this, note that Equation \eqref{equationfinale} yields that both $X$ and $N$ are contained in the $FC$-centre of $G$, so $G=FC(G)$. This fact will be very useful in our considerations below and will sometimes be employed without any further notice.

\medskip

Our first main result deals with finitely generated skew braces with property \PS, but first, we need a lemma and some definitions.

\begin{defn}[see also \cite{MR4256133}, p.6]
A skew brace $B$ is {\it finitely generated} if there is a finite subset $E$ of~$B$ such that $B$ is the smallest sub-skew brace of $B$ containing~$B$.
\end{defn}

\begin{lem}\label{lemma1}
Let $B$ be a skew brace whose additive group is generated by the \ps-elements $x_1,\ldots,x_n$. Then $B/\Ann(B)$ is finite.
\end{lem}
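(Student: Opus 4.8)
The plan is to transfer the group-theoretic fact that a finitely generated $FC$-group is finite over its centre to the brace setting, working inside the semidirect product $G=(B,+)\rtimes_\lambda(B,\circ)$ where $\ast$ is realised as a commutator by $[(0,a),(b,0)]=(a\ast b,0)$. First I would record the translation of the hypothesis: an element $x$ has property \ps exactly when $\Fix^r(x)$, $C_x^+$, $\Fix^l(x)$ and $C_x^\circ$ all have finite index, which amounts to saying that both $(x,0)$ and $(0,x)$ are $FC$-elements of $G$. This reformulation will guide the two structural computations below.

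The first genuine step is additive. Since each $C_{x_i}^+$ has finite index in $(B,+)$, every $x_i$ is an $FC$-element of $(B,+)$; as the $x_i$ generate $(B,+)$ and a finitely generated $FC$-group is finite over its centre, $(B,+)$ is finite over $Z(B,+)$. In particular $(B,+)$ is a finitely generated $FC$-group, so its commutator subgroup and its torsion subgroup $T$ are finite and $(B,+)/T$ is free abelian of finite rank. The second step is multiplicative: because $\Fix^l(x_i)=\{b:\lambda_b(x_i)=x_i\}$ has finite index and the $x_i$ generate $(B,+)$, an automorphism $\lambda_b$ fixing all $x_i$ is the identity, so $\Ker\lambda=\bigcap_i\Fix^l(x_i)$; hence $\Ker\lambda$ has finite index in $(B,\circ)$ and $\Img\lambda$ is finite.

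The heart of the argument, and the step I expect to be the main obstacle, is to show that $B\ast B$ is finite, equivalently that the finite group $\Img\lambda$ acts trivially on the free abelian quotient $(B,+)/T$. The hypothesis gives this on the generators: since $\Fix^r(x_i)$ has finite index, $\lambda_{x_i}$ fixes a finite-index subgroup and so acts trivially on $(B,+)/T$, whence $x_i\ast b\in T$ for all $b$. The difficulty is to upgrade this from the $x_i$ to arbitrary elements. Writing $\psi\colon(B,\circ)\to\Aut\bigl((B,+)/T\bigr)$ for the induced action, one checks from $\lambda_{a+a'}=\lambda_a\lambda_{\lambda_a^{-1}(a')}$ that $\Ker\psi$ is closed under the additive operation \emph{provided} $\psi$ is trivial on $T$; as $\Ker\psi$ already contains the additive generators $x_i$, such triviality would force $\Ker\psi=(B,+)$ and hence $B\ast B\subseteq T$. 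Thus everything reduces to the rigidity statement $\psi|_T=1$. To settle it I would exploit that $\Ker\lambda$ is an additive subgroup, that every additive-torsion $\sigma$ is also multiplicative-torsion (from $\sigma^{\circ m}=\sum_{j<m}\lambda_\sigma^j(\sigma)\in T\cap\Ker\lambda$, where $m$ is the order of $\lambda_\sigma$), and the finiteness of $T$, to exclude any nontrivial action of a torsion element on the free part.

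Granting $B\ast B\subseteq T$ finite, the conclusion is routine. Each $\lambda_b$ then differs from the identity by a map into the finite group $T$, so $\bigcap_{x\in B}\Fix^r(x)=\bigcap_{\mu\in\Img\lambda}\Fix(\mu)$ is a finite intersection of finite-index subgroups and hence has finite index in $(B,+)$; likewise $\Soc(B)=\Ker\lambda\cap Z(B,+)$ has finite index. For $g\in\Soc(B)\subseteq Z(B,+)$ one has $g\circ b=g+b$ and $b\circ g=b+\lambda_b(g)$, so $g\in Z(B,\circ)$ if and only if $\lambda_b(g)=g$ for all $b$; therefore $\Ann(B)=\Soc(B)\cap Z(B,\circ)=\Soc(B)\cap\bigcap_{x\in B}\Fix^r(x)$, an intersection of two finite-index subgroups of $(B,+)$. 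Hence $\Ann(B)$ has finite additive index, and since it is an ideal (so that $a+\Ann(B)=a\circ\Ann(B)$ and the additive and multiplicative indices coincide), $B/\Ann(B)$ is finite, as required.
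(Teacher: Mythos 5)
Your opening reformulation, the additive step (that $(B,+)$ is a finitely generated $FC$-group, finite over its centre, with finite torsion subgroup $T$ and free abelian quotient), the multiplicative step (that $\Ker\lambda=\bigcap_i\Fix^l(x_i)$ has finite index in $(B,\circ)$, so $\Img\lambda$ is finite), the triviality of $\psi_{x_i}$ on $(B,+)/T$, and the endgame deducing finiteness of $B/\Ann(B)$ from ``$B\ast B\subseteq T$'' are all sound. But the step you yourself call the heart of the argument is genuinely missing, and both halves of your plan for it break down. First, the conditional closure claim: from $\lambda_{a+a'}=\lambda_a\lambda_{\lambda_a^{-1}(a')}$ and $a,a'\in\Ker\psi$ you reduce membership of $a+a'$ in $\Ker\psi$ to membership of $\lambda_a^{-1}(a')=t+a'$ with $t\in T$; applying the same identity to $t+a'$ (using $\psi_t=1$) reduces it to $\lambda_t^{-1}(a')=s+a'$ with $s\in T$, and so on --- the reduction never terminates. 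What is actually needed is that $\Ker\psi$ be invariant under the maps $\lambda_t$, $t\in T$ (a left-ideal-type property), and nothing in your proposal establishes this. Second, the rigidity statement $\psi|_T=1$ is only announced, never proved. Your tools (additive closure of $\Ker\lambda$, the identity $\sigma^{\circ m}=\sum_{j<m}\lambda_\sigma^j(\sigma)$, finiteness of $T$) do show that each $\psi_\sigma$ with $\sigma\in T$ has finite order in $\Aut\bigl((B,+)/T\bigr)$; but $(B,+)/T$ is free abelian of finite rank and admits many nontrivial finite-order automorphisms (inversion, permutations of a basis), so finite order does not exclude a nontrivial action. The only other visible route --- writing $\sigma\in T$ as an additive word in the $x_i$ and invoking $\psi_{x_i}=1$ --- presupposes additive closure of $\Ker\psi$, which you made conditional on $\psi|_T=1$: the two halves of your argument each require the other.

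This gap is not cosmetic. Observe that $\psi|_T=1$ says precisely that $T\ast B\subseteq T$, i.e.\ that $T=T_+(B)$ is an ideal of $B$; the paper proves this only later (Theorem~\ref{theoperiodicfc}) and only under the full property \PS, where the semidirect product $(B,+)\rtimes_\lambda(B,\circ)$ is an $FC$-group and Dietzmann's lemma applies. Here merely the additive generators are assumed to be \ps-elements, and the paper explicitly records that it is unknown whether the \ps-elements of a skew brace are closed under the brace operations --- exactly the generators-to-arbitrary-elements transfer your plan needs. The paper's own proof of Lemma~\ref{lemma1} sidesteps all of this: it invokes the second half of \cite[Theorem 5.4]{MR4256133} to get that the map $b+\Ann(B)\mapsto\bigl(b\ast x_1,\ldots,b\ast x_n,x_1\ast b,\ldots,x_n\ast b,[b,x_1]_+,\ldots,[b,x_n]_+\bigr)$ is well defined and injective, and then observes that each coordinate takes only finitely many values because each $x_i$ is an \ps-element, so the quotient injects into a finite set. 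To salvage your route you would need an independent proof that $T\ast B\subseteq T$ under the lemma's weak hypothesis; as it stands, that is the whole difficulty of the lemma, repackaged.
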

\begin{proof}
Consider the map $$f:\, B/\Ann(B)\rightarrow \big(B^{(2)}\big)^{2n}\times \big([B,B]_+\big)^n$$ given by the assignment $$f\left(b+\operatorname{Ann}(B)\right)=\left(b\ast x_1,\ldots,b\ast x_n,x_1\ast b,\ldots,x_n\ast b,[b,x_1]_+,\ldots,[b,x_n]_+\right).$$ It has been shown in the second half of \cite[Theorem 5.4]{MR4256133} that the function $f$ is well-defined and injective. Since every $x_i$ is an \ps-element of $B$, it follows that the image of $f$ is a finite set, so also $B/\Ann(B)$ is finite, and the statement is proved. 
\end{proof}

Let $\mathcal{S}=\{x_1,\ldots,x_n\}$ be symbols. A {\it $b$-word} with respect to $\mathcal{S}$ is a sequence of symbols recursively defined as follows: the empty sequence is a $b$-word and such are the $1$-element sequences $x_1$, $x_2$, \ldots, $x_n$; if we have two $b$-words $w_1$ and $w_2$, then the sequences $w_1\circ w_2$, $w_1+w_2$, $-w_1$, $w_1^{-1}$ are $b$-words.  We define the {\it weight} $\mathfrak{w}(w)$ of a $b$-word $w$ with respect to $\mathcal{S}$ as follows. Let $\mathfrak{w}(\emptyset)=0$ and $\mathfrak{w}(x_i)=1$ for each $i=1,\ldots,n$. If $w_1$ and $w_2$ are $b$-words whose weights are $n_1$ and $n_2$, respectively, then the $b$-words $w_1\circ w_2$ and $w_1+w_2$ both have weight $n_1+n_2$, while the $b$-words $-w_1$ and $w_1^{-1}$ both have weight $n_1+1$.

Now, let $B$ be a skew brace and let $b_1,\ldots, b_n$ be elements of $B$. It is clear that if $w(x_1,\ldots,x_n)$ is any $b$-word, then we may evaluate $w(b_1,\ldots,b_n)$ in $B$. Thus, the smallest sub-skew brace $C$ generated by $b_1,\ldots, b_n$ in $B$ is precisely the set of all evaluations of $b$-words. Moreover, if $c$ is any element of $C$, the {\it weight}~$\mathfrak{w}(c)$ of $c$ (with respect to $b_1,\ldots,b_n$) is defined as the smallest weight of a $b$-word $w(x_1,\ldots,x_n)$ such that $c=w(b_1,\ldots,b_n)$.

\begin{thm}\label{theoremfg}
Let $B$ be a skew brace with property \PS. The following statements are equivalent:
\begin{itemize}
\item[\textnormal{(1)}] $B$ is finitely generated.
\item[\textnormal{(2)}] $(B,+)$ is finitely generated.
\item[\textnormal{(3)}] $(B,\circ)$ is finitely generated.
\end{itemize}
\end{thm}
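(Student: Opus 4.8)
The plan is to prove the cycle of implications $(1)\Rightarrow(2)\Rightarrow(3)\Rightarrow(1)$, since these three suffice for the full equivalence and the only substantial step is $(1)\Rightarrow(2)$. The implication $(3)\Rightarrow(1)$ is immediate: a finite set generating $(B,\circ)$ a fortiori generates $B$ as a skew brace.

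For $(1)\Rightarrow(2)$, suppose $B=\langle b_1,\dots,b_n\rangle$ as a skew brace. The key observation I would isolate is that for every $x\in B$ the set $O(x)=\{\lambda_b(x):b\in B\}$ is finite. Indeed $\lambda_b(x)=(b\ast x)+x$, and the assignment $b\circ\Fix^l(x)\mapsto b\ast x$ shows that $\{b\ast x:b\in B\}$ has cardinality $\big|(B,\circ):\Fix^l(x)\big|$, which is finite because $\Fix^l(x)\supseteq\Fix^l(x)\cap C_x^\circ$ has finite index by property \PS. I would then set $\Omega=\bigcup_{i=1}^n O(b_i)$, a finite set, and let $A=\langle\Omega\rangle_+$ be the additive subgroup it generates. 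Since $\lambda$ is a homomorphism $(B,\circ)\to\Aut(B,+)$, for each generator one has $\lambda_c(\lambda_b(b_i))=\lambda_{c\circ b}(b_i)\in O(b_i)\subseteq A$, so $\lambda_c(A)\subseteq A$ and hence $A$ is a left ideal; being a left ideal, $A$ is a subgroup of both $(B,+)$ and $(B,\circ)$, hence a sub-skew brace of $B$. As $A$ contains each $b_i=\lambda_1(b_i)\in O(b_i)$ and $B$ is generated by the $b_i$ as a skew brace, we conclude $A=B$, so $(B,+)=\langle\Omega\rangle_+$ is finitely generated.

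For $(2)\Rightarrow(3)$, I would exploit that $B$ has property \PS, so every element is an \ps-element; once $(B,+)$ is finitely generated it is generated by finitely many \ps-elements, and \cref{lemma1} yields that $B/\Ann(B)$ is finite. On $\Ann(B)\subseteq\Soc(B)=\Ker\lambda\cap Z(B,+)$ the two operations coincide, since $g\circ h=g+\lambda_g(h)=g+h$ whenever $g\in\Ker\lambda$; moreover, as $\Ann(B)$ is an ideal, $a+\Ann(B)=a\circ\Ann(B)$, so it has the same finite index in $(B,+)$ and in $(B,\circ)$. Thus $\Ann(B)$, a finite-index subgroup of the finitely generated group $(B,+)$, is finitely generated, and the very same finite set generates it as a subgroup of $(B,\circ)$ because the two operations agree there; since this subgroup has finite index in $(B,\circ)$, the group $(B,\circ)$ is finitely generated.

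The main obstacle is that the orbit argument for $(1)\Rightarrow(2)$ does not transpose to the multiplicative side: the maps $\lambda_c$ are automorphisms of $(B,+)$ but not of $(B,\circ)$, so the finite orbits $O(x)$ cannot be used to assemble a $\lambda$-invariant $\circ$-subgroup in the same direct way. This asymmetry is exactly why I would route the multiplicative conclusion through the annihilator via \cref{lemma1}, and why proving the cycle $(1)\Rightarrow(2)\Rightarrow(3)\Rightarrow(1)$ is more economical than attempting $(1)\Rightarrow(3)$ by a symmetric construction.
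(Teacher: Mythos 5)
Your proof is correct, and for the main implication (1) $\Rightarrow$ (2) it takes a genuinely different and substantially shorter route than the paper's. The paper passes to the semidirect product $G=(B,+)\rtimes_\lambda(B,\circ)$, invokes $FC$-group theory (local finiteness of $[G,G]$ together with Dietzmann's lemma) to produce a finite normal subgroup $H$ of $G$ containing all elements $(c\ast b_i,0)$ and $([c,b_i]_+,0)$, and then proves $(B,+)=\langle (b_1,0),\ldots,(b_n,0),H\rangle$ by induction on the weight of $b$-words, with a delicate separate computation to handle multiplicative inverses modulo $H$. You instead note that property \PS makes each orbit $O(b_i)=\{\lambda_b(b_i):b\in B\}$ finite, because $\lambda_b(x)=(b\ast x)+x$ and $\{b\ast x : b\in B\}$ is in bijection with the cosets of $\Fix^l(x)$, which has finite index in $(B,\circ)$; then the additive subgroup $A=\langle\Omega\rangle_+$ of the finite set $\Omega=\bigcup_i O(b_i)$ satisfies $\lambda_c(\Omega)\subseteq\Omega$ for every $c$ (by $\lambda_c\lambda_b=\lambda_{c\circ b}$), so $A$ is a left ideal, hence a sub-skew brace, and since it contains $b_1,\ldots,b_n$ it equals $B$. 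This bypasses Dietzmann's lemma and the weight induction entirely, using only facts stated in the paper's preliminaries, and is a genuine simplification. What the paper's longer argument buys is a slightly finer structural output — $(B,+)$ is generated by the original generators together with a \emph{finite} subgroup $H$, normal in $G$, a technique (reduction to $G$ plus Dietzmann) that the paper reuses in later results such as \cref{theoperiodicfc} and \cref{goodsocle} — whereas your $\Omega$ is merely a finite generating set; for the theorem as stated both are equally sufficient. Your step (2) $\Rightarrow$ (3), via \cref{lemma1}, the finiteness of $B/\Ann(B)$, and the coincidence of the two operations on $\Ann(B)\subseteq\Ker\lambda$ so that additive and multiplicative finite generation transfer through a finite-index subgroup, is essentially identical to the paper's closing paragraph, and (3) $\Rightarrow$ (1) is immediate in both treatments.
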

\begin{proof}
It is clear that (3) $\implies$ (1) and (2) $\implies$ (1). Assume (1) and let $b_1,\ldots,b_n$ be generators of $B$ containing their additive inverses. Let $G=N\rtimes X$, where the action of $X=(B,\circ)$ on $N=(B,+)$ is given by $\lambda$. As we already mentioned, $G$ is an $FC$-group, so in particular, $[G,G]$ is locally finite.

It follows from Equation \eqref{equationfinale} that, for each $c\in B$ and $i=1,\ldots,n$, the element $(c\ast b_i,0)$ of $N$ is contained in $[G,G]$. Clearly, also the elements $\big([c,b_i]_+,0\big)$ are contained in $[G,G]\cap N$. Now, since $G$ is an $FC$-group, the elements of types $(c\ast b_i,0)$ and $([c,b_i]_+,0)$ are periodic and finite in number; let $E$ be their set. It follows from Dietzmann's lemma (see the preliminaries)  that the normal closure $H$ of $E$ in $G$ is finite (and is certainly contained in~$N$). 

Let $g_1=(b_1,0),\ldots,g_n=(b_n,0)$. We claim that $N=U$, where $$U=\big\langle g_1,\ldots, g_n,H\big\rangle,$$ is the subgroup generated by $g_1,\ldots, g_n$ and $H$. To this aim, by induction on the weight $\mathfrak{w}(c)$ of $c$ (with respect to $b_1,\ldots,b_n$), we prove that~an arbitrary element $g=(c,0)$ of $N$ belongs to $U$. If $\mathfrak{w}(c)= 0$, this is obvious. Assume $\mathfrak{w}(c)\geq1$ and the usual induction hypothesis. By definition, there are elements $c_1$ and $c_2$ of~$B$ such that one of the following alternatives holds:

\begin{itemize}
\item[(i)]  $c=c_1\circ c_2$ and $\mathfrak{w}(c_1),\mathfrak{w}(c_2)<\mathfrak{w}(c)$.
\item[(ii)] $c=c_1+c_2$ and $\mathfrak{w}(c_1),\mathfrak{w}(c_2)<\mathfrak{w}(c)$. 
\item[(iii)] $c=-c_1$ and $\mathfrak{w}(c)=\mathfrak{w}(c_1)+1$.
\item[(iv)] $c=c_1^{-1}$ and $\mathfrak{w}(c)=\mathfrak{w}(c_1)+1$.
\end{itemize}

It is clear that cases (ii) and (iii) can be neglected, so we only need to deal with cases (i) and (iv). Let us first deal with (i). In this case, $c=c_1\circ c_2$ and $\mathfrak{w}(c_1),\mathfrak{w}(c_2)<\mathfrak{w}(c)$. By induction, we can write $(c_2,0)+H$ as a sum (with repetitions) of the elements $g_1+H,\ldots,g_n+H$, so, by the skew distributive law, we may assume $c_2=b_i$, for some $i$: notice here that for example $$c_1\circ (b_1+h)=c_1\circ b_1-c_1+c_1\circ h=c_1\circ b_1+c_1\ast h+h,$$ whenever $(h,0)\in H$, and $(c_1\ast h+h,0)\in H$.  However, $c_1\ast b_i$ is contained in $H$, so $$g+H=(c_1\circ b_i,0)+H=(c_1,0)+(b_i,0)+H$$ and again $g$ belongs to $U$, by induction. 

Now we deal with (iv). In this case, $c=c_1^{-1}$ and $\mathfrak{w}(c)=\mathfrak{w}(c_1)+1$, so by induction $c_1$ belongs to $U$. For simplicity's sake, we write every element $(b,0)$ of $N$ as $b$; under this terminology, $a\ast b$ means $(a\ast b,0)$.

We claim that $a^{-1}+H=-a+H$ every time $a$ belongs to $U$; this completes the proof in this case. Write $a=b_1'+\ldots+b_m'+h$ for some elements $b_1',\ldots,b_m'\in\{b_1,\ldots,b_n\}$ and $h\in H$. Clearly, for each $i$, $$H=b_i\circ b_i^{-1}+H=b_i+b_i^{-1}+H,$$ so $-b_i+H=b_i^{-1}+H$. Now, since $H$ is a normal subgroup of $G$, it contains all elements of the form $a\ast h$, where $a\in N$ and $h\in H$, and consequently {\small 
\begin{align*}
(b_1'+&\ldots+b_m'+h)\circ(-b_1'+\ldots-b_m'-h)+H\\
&=(b_1'+\ldots+b_m'+h)\circ(-b_1'+\ldots-b_m')-b_1'+\ldots-b_m'-h\\
&\quad+(b_1'+\ldots+b_m'+h)\circ(-h)+H\\
&=(b_1'+\ldots+b_m'+h)\circ(-b_1'+\ldots-b_{m}')-b_1'+\ldots-b_m'-h\\
&\quad+b_1'+\ldots+b_m'+h-h+H\\
&=(b_1'+\ldots+b_m'+h)\circ(-b_1'+\ldots-b_{m}')-h+H\\
&=\ldots=(b_1'+\ldots+b_m'+h)\circ(-b_1')-b_2'+\ldots-b_m'-h+H\\
&=b_1'+\ldots+b_m'+h-b_1'-b_2'-\ldots-b_m'-h+H=H
\end{align*} }

Therefore there is an element $d$ of $H$ such that $$(b_1'+\ldots+b_m'+h)\circ(-b_1'+\ldots-b_m'-h)=d.$$ However, $H$ is a normal subgroup of $G$, so $H$ is a group with respect to $\circ$, which means that $$d^{-1}\circ (b_1'+\ldots+b_m'+h)=(-b_1'+\ldots-b_m'-h)^{-1}.$$ Repeated application of the skew distributive law shows that $$(-b_1'+\ldots-b_m'-h)^{-1}+H=d^{-1}\circ (b_1'+\ldots+b_m'+h)+H=(b_1'+\ldots+b_m')+H.$$ The fact that $b_1,\ldots,b_m,m,h$ were arbitrary proves the claim.

\medskip

Finally, since $H$ is a finite subgroup of $(B,+)$, it follows that $(B,+)$ is finitely generated. Moreover, Lemma \ref{lemma1} yields that $B/\Ann(B)$ is finite. Since $(B,+)$ is finitely generated also $\Ann(B)$ is finitely generated as a subgroup of $(B,+)$, and this means that it is also finitely generated as a subgroup of $(B,\circ)$. The fact that $\Ann(B)$ has finite index in $(B,\circ)$ means that $(B,\circ)$ is finitely generated and completes the proof of the theorem.
\end{proof}

\begin{cor}
Let $B$ be a skew brace with property \PS. If $B$ is finitely generated, then $B/\operatorname{Ann}(B)$ is finite.
\end{cor}
\begin{proof}
This is a consequence of Theorem \ref{theoremfg} and Lemma \ref{lemma1}.
\end{proof}

\medskip

The set of all periodic elements of a group $G$ is denoted by $T(G)$. For a skew brace $(B,+,\circ)$, we denote by $T_+(B)$ the set of periodic elements of the additive group $(B,+)$ and by $T_{\circ}(B)$ the set of periodic elements of the multiplicative group $(B,\circ)$. We say that a skew brace $B$ with property \PS  is {\it periodic} if $(B,+)=T_+(B)$, and is {\it torsion-free} if $T_+(B)=\{0\}$.

\begin{cor}
Let $B$ be a periodic skew brace. If $B$ is finitely generated, then~$B$ is finite.
\end{cor}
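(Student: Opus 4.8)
The plan is to combine Theorem~\ref{theoremfg} with the structure theory of finitely generated $FC$-groups recalled in the preliminaries. Since $B$ is a \emph{periodic} skew brace, by definition it has property \PS, so Theorem~\ref{theoremfg} applies: the hypothesis that $B$ is finitely generated is equivalent to the additive group $(B,+)$ being finitely generated. The whole problem therefore reduces to showing that the underlying set of $B$ is finite, which amounts to proving that $(B,+)$ is finite.

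Because $B$ has property \PS, the additive group $(B,+)$ is an $FC$-group, as noted just after the definition of property \PS. Hence $(B,+)$ is a \emph{finitely generated} $FC$-group, and by the fact recalled in the preliminaries such a group is finite over its centre; that is, $\big|(B,+):Z(B,+)\big|$ is finite. In particular $Z(B,+)$ is a subgroup of finite index in the finitely generated group $(B,+)$, and is therefore itself finitely generated.

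To finish, I would exploit periodicity. Since $(B,+)=T_+(B)$, every subgroup of $(B,+)$ is periodic; in particular $Z(B,+)$ is a finitely generated periodic abelian group, hence finite by the structure theorem for finitely generated abelian groups. As $Z(B,+)$ has finite index in $(B,+)$, it follows that $(B,+)$ is finite, and so $B$ is finite.

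The argument has no serious obstacle once the ingredients are lined up; the only subtle point worth flagging is that a finitely generated periodic group need not be finite in general (Burnside-type phenomena), so one cannot conclude directly from finite generation and periodicity alone. The $FC$-property is exactly what is needed to bypass this: it lets us pass to the central-by-finite situation, where the relevant centre is abelian and finite generation together with periodicity does force finiteness.
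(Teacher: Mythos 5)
Your proof is correct and is essentially the argument the paper intends: the corollary is stated without proof, and the expected derivation is exactly yours — use Theorem~\ref{theoremfg} to see that $(B,+)$ is a finitely generated $FC$-group, invoke the preliminary fact that such a group is finite over its centre, and then conclude from periodicity that the finitely generated abelian group $Z(B,+)$, and hence $(B,+)$, is finite. Your closing remark about the Burnside obstruction correctly identifies why the $FC$-hypothesis (equivalently, property \PS built into the paper's definition of a periodic skew brace) is indispensable.
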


\medskip

Let $B$ be a skew brace. We say that $B$ is {\it locally finite} if every finitely generated sub-skew brace of $B$ is finite.

\begin{cor}\label{periodicimplieslocallyfinite}
Let $B$ be a periodic skew brace. Then $B$ is locally finite.
\end{cor}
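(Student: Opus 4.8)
The plan is to reduce the statement to the corollary immediately above, which asserts that a finitely generated periodic skew brace is finite. By the definition of local finiteness, it suffices to show that every finitely generated sub-skew brace $C$ of $B$ is finite, so first I would fix such a $C$ and check that it again meets the hypotheses of that corollary. Since the class of skew braces with property \PS is closed under forming sub-skew braces (as recorded in the discussion following \cref{deffc}), $C$ inherits property \PS. Moreover, the additive group $(C,+)$ is a subgroup of the periodic group $(B,+)=T_+(B)$, and any subgroup of a periodic group is periodic; hence $(C,+)=T_+(C)$ and $C$ is a periodic skew brace in the sense defined above.

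With this in hand, I would simply invoke the preceding corollary applied to $C$: being a finitely generated periodic skew brace, $C$ is finite. As $C$ was an arbitrary finitely generated sub-skew brace of $B$, every such sub-skew brace is finite, which is precisely the assertion that $B$ is locally finite. This closes the argument.

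There is essentially no substantive obstacle here; the result is a formal consequence of the chain of corollaries resting on \cref{theoremfg} and \cref{lemma1}. The only point meriting a moment's care is the bookkeeping that both ingredients in the notion of a ``periodic skew brace''—namely property \PS and periodicity of the additive group—descend to sub-skew braces. Property \PS descends by the general closure remark, and periodicity descends because subgroups of periodic groups are periodic. Once these two closure facts are noted, the corollary follows immediately from the previously established finiteness result.
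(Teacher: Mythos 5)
Your proof is correct and is exactly the argument the paper intends: the corollary is stated without proof as an immediate consequence of the preceding one (finitely generated periodic skew braces are finite), and your verification that both property \PS and additive periodicity pass to sub-skew braces supplies precisely the bookkeeping that makes this reduction work.
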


\begin{cor}
Let $B$ be a skew brace with property \PS. If $B$ is finitely generated, then $B/\operatorname{Ann}(B)$ is finite.
\end{cor}
\begin{proof}
This is a consequence of Theorem \ref{theoremfg} and Lemma \ref{lemma1}.
\end{proof}

\medskip

If $G$ is an $FC$-group, then $T(G)$ is a normal subgroup of $G$. If $B$ is a skew brace with property \PS, then both $(B,+)$ and $(B,\circ)$ are $FC$-groups, so $T_+(B)$ and $T_\circ(B)$ are normal subgroups of $(B,+)$ and $(B,\circ)$, respectively. Our next result justifies the definition of a periodic skew brace we gave above.

\begin{thm}\label{theoperiodicfc}
Let $B$ be a skew brace with property \PS. The following statements hold:
\begin{itemize}
    \item[\textnormal{(1)}] $T_+(B)=T_\circ(B)$.
    \item[\textnormal{(2)}] $T_+(B)$ is an ideal of $B$.
    \item[\textnormal{(3)}] $T_+\big(B/T_+(B)\big)=\{0\}$. 
    \item[\textnormal{(4)}] If $B$ is finitely generated, then $T_+(B)$ is finite.
\end{itemize}
\end{thm}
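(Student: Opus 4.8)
The plan is to lean throughout on the fact, already established in the excerpt, that property \PS turns all three groups $(B,+)$, $(B,\circ)$ and the semidirect product $G=(B,+)\rtimes_\lambda(B,\circ)$ into $FC$-groups. Two elementary observations organise the whole proof. First, since $x\ast b=\lambda_x(b)-b$, the subgroup $\Fix^r(x)$ is exactly the fixed-point subgroup of the automorphism $\lambda_x\in\Aut(B,+)$, so property \PS forces this fixed-point subgroup to have finite index in $(B,+)$. Second, the identity $a\circ b=a+\lambda_a(b)$ shows that $\delta\colon a\mapsto(a,a)$ is an injective group homomorphism from $(B,\circ)$ into $G$, and that the additive and multiplicative copies of an element are tied together inside $G$ by $(a,0)=\delta(a)\,(0,a)^{-1}$ (writing $0$ for the common identity $0=1$, and $(0,a)$ for the image of $a$ under $(B,\circ)\cong X\hookrightarrow G$).

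For part (1) I would prove the two inclusions separately. The inclusion $T_\circ(B)\subseteq T_+(B)$ is the easy one: if $a$ has finite multiplicative order then both $\delta(a)$ and $(0,a)$ are periodic elements of the $FC$-group $G$, hence so is their product $(a,0)=\delta(a)\,(0,a)^{-1}$, because the periodic elements of an $FC$-group form a subgroup; this says precisely that $a\in T_+(B)$. The reverse inclusion $T_+(B)\subseteq T_\circ(B)$ is the core of the theorem. Here I would first record the closed form $a^k=a+\lambda_a(a)+\cdots+\lambda_a^{k-1}(a)$, obtained by iterating $a^{k+1}=a^k+\lambda_{a^k}(a)=a^k+\lambda_a^k(a)$. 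If $a\in T_+(B)$, then every summand lies in the periodic subgroup $T_+(B)$, which is characteristic in $(B,+)$ and hence $\lambda_a$-invariant, so each power $a^k$ lies in $T_+(B)$ as well.

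The hard part will be converting ``$a^k\in T_+(B)$ for all $k$'' into genuine multiplicative periodicity, and this is where the finite-index hypothesis on $\Fix^r(a)$ enters. The key sublemma I would isolate and prove is: an automorphism $\phi$ of a periodic $FC$-group $L$ whose fixed-point subgroup has finite index has finite order. The argument passes to the normal core $C_0$ of $\Fix(\phi)$ (normal, of finite index, and fixed pointwise), notes that $\phi$ induces a finite-order automorphism of the finite group $L/C_0$ so that $\phi^N$ acts trivially on $L/C_0$ for some $N$, and then studies the map $h\mapsto h^{-1}\phi^N(h)$, which takes values in $C_0$; the decisive point is that this map takes only finitely many values, since on each coset of $C_0$ its values form a single $C_0$-conjugacy class and $L$ is $FC$, whence a bounded power of $\phi^N$ is the identity. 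Applying this to $\phi=\lambda_a$ restricted to $L=T_+(B)$ (legitimate because $\Fix^r(a)\cap T_+(B)$ has finite index in $T_+(B)$) yields an integer $d$ with $\lambda_a^d$ fixing $T_+(B)$ pointwise. Setting $s=a^d\in T_+(B)$, the power formula collapses to $(a^d)^m=m\,s$, so that $a^{dm}=0$ where $m$ is the additive order of $s$; hence $a\in T_\circ(B)$, completing (1).

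Parts (2)--(4) then follow quickly. For (2), the subgroup $T_+(B)$ is normal in the $FC$-group $(B,+)$ and is $\lambda$-invariant because it is characteristic in $(B,+)$ while each $\lambda_a$ is an additive automorphism; by (1) it equals $T_\circ(B)$, which is normal in $(B,\circ)$, and these are exactly the defining conditions for an ideal. For (3), I would invoke the elementary fact that whenever the periodic elements of a group form a subgroup the quotient by it is torsion-free: if $a+T_+(B)$ had finite additive order then some additive multiple of $a$ would be periodic, forcing $a\in T_+(B)$, so $T_+\big(B/T_+(B)\big)=\{0\}$. For (4), \cref{theoremfg} gives that $(B,+)$ is finitely generated, hence a finitely generated $FC$-group; such a group is central-by-finite with finitely generated abelian centre, and a short index computation (comparing $T_+(B)\cap Z(B,+)$, which is finite, with the finite quotient modulo the centre) shows that $T_+(B)$ is finite.
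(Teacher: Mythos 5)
Your proposal is correct, and the real content of it---part (1)---takes a genuinely different route from the paper's. The paper works entirely inside $G=(B,+)\rtimes_\lambda(B,\circ)$ and uses the $FC$-toolkit twice: for ``additively periodic implies multiplicatively periodic'' it applies Dietzmann's lemma to get a finite normal closure $M$ of $(a,0)$, then uses Equation~\eqref{equationfinale} to see that the first coordinates of elements of $M$ are closed under $\circ$, so the powers of $a$ range in a finite set; for the converse it uses that $[X,N]$ is locally finite together with the congruence $(b_1+b_2,0)\equiv(b_1\circ b_2,0)$ modulo $[X,N]$, so that $a^m=0$ forces $ma$ to lie in the locally finite subgroup $[X,N]$. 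You replace both steps. Your ``multiplicative implies additive'' direction, via the diagonal homomorphism $\delta(a)=(a,a)$ and the identity $(a,0)=\delta(a)(0,a)^{-1}$, is arguably slicker than the paper's corresponding direction, though it rests on the same background fact (the periodic elements of an $FC$-group form a subgroup). Your ``additive implies multiplicative'' direction avoids Dietzmann's lemma altogether: the power formula $a^k=a+\lambda_a(a)+\cdots+\lambda_a^{k-1}(a)$ combined with your sublemma that an automorphism $\phi$ of a periodic $FC$-group with finite-index fixed-point subgroup has finite order, applied to $\phi=\lambda_a$ restricted to $T_+(B)$. I checked the sublemma and it is sound: with $\psi=\phi^N$ acting trivially modulo the core $C_0$ of $\Fix(\phi)$, the map $d(h)=h^{-1}\psi(h)$ satisfies $d(hc)=c^{-1}d(h)c$ for $c\in C_0$, hence takes finitely many values (finitely many cosets, finite conjugacy classes), each of finite order, and $\psi^k(h)=h\,d(h)^k$ then gives $\psi^M=\operatorname{id}$ for $M$ the least common multiple of those orders. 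A notable structural difference: your argument exploits the finite-index hypothesis of property \PS on $\Fix^r(a)$ directly, whereas the paper only ever uses the derived fact that $G$ is an $FC$-group; your sublemma is also a reusable statement in its own right. Parts (2)--(4) of your proof essentially coincide with the paper's (characteristic torsion subgroups give the ideal property; Theorem~\ref{theoremfg} plus finiteness of the torsion subgroup of a finitely generated $FC$-group give (4)), with your (4) merely spelling out the index computation that the paper leaves implicit.
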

\begin{proof}
Let $G=N\rtimes X$, where the action of $X=(B,\circ)$ on $N=(B,+)$ is given by~$\lambda$. 

\medskip

\noindent(1)\quad Let $a\in B$. Of course, $a$ can be regarded as an element $g_1=(a,0)$ of~$N$ and as an element $g_2=(0,a)$ of $X$. We need to show that $g_1$ is periodic if and only if~$g_2$ is periodic.

Assume first that $g_1$ is periodic. Since $G$ is an $FC$-group, Dietzmann's lemma yields that the normal closure $M=\langle g_1\rangle^G$ of $\langle g_1\rangle$ in $G$ is finite. Then $M$ contains $[M,X]$, which means that if $M$ contains elements of type $(b,0)$ and $(c,0)$ for some $b,c\in B$, then by Equation~\eqref{equationfinale} it also contains $(b\ast c,0)$ and, consequently, $(b\circ c,0)$. Therefore, every element $(b,0)\in M$ is such that $b$ is a periodic element of $X$. In particular, $g_2$ is periodic.

Conversely, assume $g_2$ is periodic. Since $G$ is an $FC$-group, its derived subgroup~$[G,G]$ is locally finite. In particular, $[X,N]$ is a locally finite normal subgroup of~$G$. On the other hand, Equation~\eqref{equationfinale} shows that $[X,N]$ contains every element of type $(b,0)$, where $b\in B^{(2)}$, so $$(b_1+b_2,0)+[X,N]=(b_1\circ b_2,0)+[X,N],$$ for every $b_1,b_2\in B$. It follows that there is a positive integer $m$ such that $$(c,0)=(m a,0)\in[X,N].$$ However, this means that $c$ is a periodic element of $N$, and so $a$ is a periodic element of $N$. Thus $g_1$ is periodic, and the statement is proved.

\medskip

\noindent(2)  As a consequence of the previous point $T_+(B)$ is a normal subgroup of both~$N$ and $X$. It only remains to show that $T_+(B)$ is $\lambda$-invariant. However, this immediately follows from the observation that the set of all periodic elements of $N$ is a characteristic subgroup of $N$, so it is normal in $G$. Therefore $T_+(B)$ is $\lambda$-invariant and hence is an ideal of $B$.

\medskip

\noindent(3) This is an immediate consequence of (2).

\medskip

\noindent(4) It follows from Theorem~\ref{theoremfg} that $(B,+)$ is finitely generated. Since $(B,+)$ is an $FC$-group, we have that $T_+(B)$ is finite which proves the statement.
\end{proof}

\medskip

 Periodic skew braces play a very relevant role in the structure of an arbitrary skew brace with property \PS as the following results show.

\begin{thm}\label{thgzglf}
Let $B$ be a skew brace with property \PS. Then $B/\Ann(B)$ is locally finite.
\end{thm}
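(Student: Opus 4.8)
The plan is to reduce the statement to a purely additive torsion condition and then exploit the fact, already established in this section, that the semidirect product $G=N\rtimes X$ (with $N=(B,+)$, $X=(B,\circ)$ and action $\lambda$) is an $FC$-group. First I would observe that it suffices to prove that every $b\in B$ admits a positive integer $M$ with $Mb\in\Ann(B)$. Indeed, granting this, $(B,+)/\Ann(B)$ is a periodic $FC$-group, being a homomorphic image of the $FC$-group $(B,+)$ by the ideal $\Ann(B)$, hence it is locally finite as a group (for instance by Dietzmann's lemma, recalled in the preliminaries). Then, given any finitely generated sub-skew brace $\bar D$ of $B/\Ann(B)$, property \PS passes to $\bar D$, so Theorem~\ref{theoremfg} guarantees that $(\bar D,+)$ is finitely generated; being a finitely generated subgroup of a locally finite group it is finite, and therefore $\bar D$ is finite. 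This is exactly local finiteness of $B/\Ann(B)$.

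To produce such an $M$ I would work inside $G$, using that $G/Z(G)$ is locally finite, hence periodic, because $G$ is an $FC$-group. A short commutator computation identifies the two relevant traces of the centre: $(a,0)\in Z(G)$ if and only if $a\in Z(B,+)$ and $\lambda_d(a)=a$ for all $d\in B$, while $(0,a)\in Z(G)$ if and only if $a\in Z(B,\circ)\cap\Ker(\lambda)$. Applying periodicity of $G/Z(G)$ to the additive copy $(b,0)$ of $b$, some power $(b,0)^m=(mb,0)$ lies in $Z(G)$, so $a:=mb$ satisfies $a\in Z(B,+)$ and $\lambda_d(a)=a$ for every $d\in B$.

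The crux — and the step where additive and multiplicative data threaten not to match — is to upgrade this $a$ to an element of $\Ann(B)=Z(B,+)\cap Z(B,\circ)\cap\Ker(\lambda)$. Here I would use the following observation: if $\lambda_d(a)=a$ for all $d\in B$, then $a^k=ka$ for every $k\geq 1$. This follows by induction from $a^{k+1}=a^k\circ a=a^k+\lambda_{a^k}(a)=ka+a$, the point being that $\lambda_{a^k}(a)=a$ by hypothesis. Thus the multiplicative powers of $a$ coincide with its additive multiples, which is precisely what bridges the two operations. Now I apply periodicity of $G/Z(G)$ once more, this time to $(0,a)$: some power $(0,a)^k=(0,a^k)=(0,ka)$ lies in $Z(G)$, so $ka\in Z(B,\circ)\cap\Ker(\lambda)$. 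Since $a\in Z(B,+)$ and $Z(B,+)$ is an additive subgroup, also $ka\in Z(B,+)$; hence $ka\in\Ann(B)$, and $M:=km$ works because $Mb=k(mb)=ka$.

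I expect the main obstacle to be exactly this mismatch between additive multiples, which is what ``$(B,+)/\Ann(B)$ periodic'' requires, and the multiplicative and $\lambda$-conditions built into $\Ann(B)$; the identity $a^k=ka$ for elements fixed by every $\lambda_d$ is what resolves it, and it is essential that the $FC$-property of $G$ lets me feed both the additive copy $(b,0)$ and the multiplicative copy $(0,a)$ into the same locally finite quotient $G/Z(G)$. The only routine parts are the two centre computations and the verification that $Z(B,+)$ is an additive subgroup, which I would not belabour.
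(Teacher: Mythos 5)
Your proof is correct and follows essentially the same route as the paper: both reduce the theorem to showing that each $b\in B$ has a multiple (equivalently, a power) lying in $\Ann(B)$, and both obtain this by applying local finiteness of $G/Z(G)$, for the $FC$-group $G=(B,+)\rtimes_\lambda(B,\circ)$, to the two copies of the element inside $G$, bridging additive multiples and multiplicative powers via the identity $a^k=ka$ for elements fixed by every $\lambda_d$. The only (minor) difference is the order of the two applications: the paper handles the multiplicative copy $(0,b)$ first, which forces an appeal to Theorem~\ref{theoperiodicfc} and to the subgroup property of $\Ker(\lambda)\cap Z(B,\circ)$, whereas you handle the additive copy $(b,0)$ first and so land directly on an additive multiple $Mb\in\Ann(B)$, bypassing Theorem~\ref{theoperiodicfc}.
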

\begin{proof}
By Corollary \ref{periodicimplieslocallyfinite}, we only need to prove that $B/\Ann(B)$ is periodic.

Let $b\in B$. We need to show that there is a positive integer $n$ such that $nb$ belongs to $\operatorname{Ann}(B)$. By Theorem \ref{theoperiodicfc}, this is equivalent to finding a positive integer $n$ such that $b^n$ is contained in $\operatorname{Ann}(B)$.

Let $G=N\rtimes_\lambda X$, where $N=(B,+)$ and $X=(B,\circ)$. Since $G$ is an $FC$-group, we have that $G/Z(G)$ is a locally finite group. Let $x=(0,b)$. Then there is a positive integer $m$ such that $$(0,b^m)=x^m\in Z(G).$$ It follows from Equation \eqref{equationfinale} that $b^m\ast c=0$, for every $c\in B$; consequently, $b^m\circ c=b^m+c$ for every $c\in B$. So far, we have that $b^m$ belongs to $\Ker(\lambda)\cap Z(B,\circ)$. Let $g=(b^m,0)$. Then there is a positive integer $\ell$ such that $$(b^{\ell m},0)=(\ell b^m,0)=g^\ell\in Z(G).$$ Thus $b'=b^{\ell m}$ belongs to $Z(B,+)$. But $\Ker(\lambda)\cap Z(B,\circ)$ is a subgroup of $(B,\circ)$, so $b'=(b^m)^\ell$ still belongs to $\Ker(\lambda)\cap Z(B,\circ)$ and the statement is proved.
\end{proof}

\medskip

Let $n\in\mathbb{N}$. A skew brace is locally finite {\it of finite exponent dividing $n$} if every finitely generated sub-skew brace is finite and both its additive and multiplicative groups have exponent dividing $n$. In combination with Theorem \ref{thgzglf}, our next result has some relevant consequences in the theory of skew braces with property \PS. It generalises well-known results of Schur and Mann (see \cite{Mann}) in group theory, and it extends \cite[Theorem 5.4]{MR4256133}.

\begin{thm}\label{schurlf}
Let $B$ be a skew brace.

\begin{itemize}
    \item[\textnormal{(1)}] If $B/\Ann(B)$ is locally finite, then~$B^{(2)}$ is locally finite.
    \item[\textnormal{(2)}] If $B/\Ann(B)$ is locally finite  of finite exponent dividing $n$ \textnormal{(}resp., finite of exponent dividing $n$\textnormal{)}, then~$B^{(2)}$ is locally finite of finite exponent dividing $f(n)$ \textnormal{(}resp., finite of exponent dividing $f(n)$\textnormal{)}, where $f(n)$ depends only on~$n$.
\end{itemize}
\end{thm}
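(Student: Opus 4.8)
The plan is to transport everything to the group $G=(B,+)\rtimes_\lambda(B,\circ)$ and apply the classical theorems of Schur (for part (1)) and Mann (for part (2)) there, after first reducing both statements to the case in which $B/\Ann(B)$ is \emph{finite}.

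For the reduction, note that $B^{(2)}$ is an ideal, so to prove it is locally finite it suffices to show that every finitely generated sub-skew brace $D\le B^{(2)}$ is finite. Writing the generators of $D$ as additive combinations of terms $a\ast b$ and collecting the finitely many elements of $B$ that occur, one finds a finitely generated sub-skew brace $C\le B$ with $D\subseteq C^{(2)}$. The image of $C$ in $B/\Ann(B)$ is finitely generated, hence finite by local finiteness, so $C/(C\cap\Ann(B))$ is finite; since $C\cap\Ann(B)\subseteq\Ann(C)$ (an element annihilating all of $B$ annihilates all of $C$), the quotient $C/\Ann(C)$ is finite, with exponent dividing $n$ in the situation of (2). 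Thus it is enough to prove: if $B/\Ann(B)$ is finite (of additive and multiplicative exponent dividing $n$), then $B^{(2)}$ is finite (of exponent dividing $f(n)$).

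So assume $B/\Ann(B)$ is finite and put $G=N\rtimes_\lambda X$ with $N=(B,+)$ and $X=(B,\circ)$. The key point is that $\Ann(B)\times\Ann(B)\subseteq Z(G)$: using $\Ann(B)=\Ker\lambda\cap Z(B,+)\cap Z(B,\circ)$ one checks from $(a,b)(c,d)=(a+\lambda_b(c),b\circ d)$ that $(g,0)$ and $(0,g)$ are both central for every $g\in\Ann(B)$, the only nonobvious verification being $\lambda_b(g)=g$ for all $b$, which follows since $g$ is central for both operations and lies in $\Ker\lambda$. Hence $G/Z(G)$ is a quotient of $(N/\Ann(B))\rtimes(X/\Ann(B))$ and is finite of order dividing $|B/\Ann(B)|^2$. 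By Schur's theorem $G'$ is finite, and Equation~\eqref{equationfinale} gives $(a\ast b,0)=[(0,a),(b,0)]\in G'$; thus $(B^{(2)},0)\le G'$ is finite, which settles (1) and extends the finite case of \cite[Theorem 5.4]{MR4256133}. For (2), a direct computation of the $n$th and then the $n^2$th power of a general element of $(N/\Ann(B))\rtimes(X/\Ann(B))$, using that both factors have exponent dividing $n$, shows that this group—and hence its quotient $G/Z(G)$—has exponent dividing $n^2$; Mann's theorem then provides a function $\beta$ depending only on $n$ with $\exp(G')\mid\beta(n^2)$, so the \emph{additive} exponent of $B^{(2)}$ divides $\beta(n^2)$.

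The hard part will be controlling the \emph{multiplicative} exponent of $B^{(2)}$, because $G'$ only bounds the additive side directly. Here I would use that on $\Ker\lambda$ the two structures agree: if $g\in\Ker\lambda$ then $g\circ x=g+x$, whence $g^k=kg$ and the additive and multiplicative orders of $g$ coincide. For $b\in B^{(2)}$ the multiplicative-exponent hypothesis gives $b^n\in\Ann(B)$, while $B^{(2)}$ being a subgroup of $(B,\circ)$ gives $b^n\in B^{(2)}$; hence $b^n\in B^{(2)}\cap\Ann(B)\subseteq\Ker\lambda$ has additive order dividing $\beta(n^2)$, so the same holds multiplicatively and $b^{\,n\beta(n^2)}=0$. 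Therefore both exponents of $B^{(2)}$ divide $f(n)=n\,\beta(n^2)$, which depends only on $n$. Feeding these finite conclusions back through the reduction yields the locally finite statements—every finitely generated sub-skew brace of $B^{(2)}$ lies in a finite $C^{(2)}$ (of exponent dividing $f(n)$ in case (2))—while the corresponding finite cases follow by applying the same argument to $B$ itself rather than to its finitely generated sub-skew braces.
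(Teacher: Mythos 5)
Your proposal is correct, and its skeleton coincides with the paper's: reduce the locally finite statements to the finite case by collecting the finitely many elements of $B$ occurring in the generators of a finitely generated sub-skew brace of $B^{(2)}$ into a finitely generated $C\le B$ with $C/\Ann(C)$ finite; then, in the finite case, pass to the semidirect product $G$, observe that the annihilator sits inside $Z(G)$, apply Schur (for (1)) and Mann (for (2)), and read $B^{(2)}$ off inside $[G,G]$ via Equation~\eqref{equationfinale}. Two differences are cosmetic: the paper forms the semidirect product with $(B,\circ)/\Ker(\lambda)$ rather than with all of $(B,\circ)$, and for the finite case of (1) it simply invokes the skew-brace Schur-type theorem (\cite[Theorem 5.4]{MR4256133}, via Lemma~\ref{lemma1}) instead of re-deriving it from group-theoretic Schur as you do. The one substantive difference is your final paragraph: since Equation~\eqref{equationfinale} places $B^{(2)}$ inside $[G,G]\cap N$, the Schur/Mann step only bounds the \emph{additive} exponent of $B^{(2)}$, and the paper's proof of (2) stops there, even though its definition of ``exponent dividing $f(n)$'' requires a bound for both the additive and the multiplicative group. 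Your extra argument --- $b^n\in B^{(2)}\cap\Ann(B)\subseteq\Ker(\lambda)$, and on $\Ker(\lambda)$ multiplicative and additive powers agree ($g^k=kg$), so the multiplicative exponent divides $n\,\beta(n^2)$ --- supplies exactly this missing step, at the price of the slightly larger bound $f(n)=n\,\beta(n^2)$ in place of the paper's $g(n^2)$. So on this point your write-up is more complete than the paper's own proof.
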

\begin{proof}
(1)\quad Let $b_1,\ldots,b_n\in B^{(2)}$ and let $A$ be the sub-skew brace generated by $b_1,\ldots,b_n$. We need to show that $A$ is finite. By definition of $B^{(2)}$, we can find elements $$c_{1,1},\ldots,c_{n,1},\ldots,c_{1,n},\ldots,c_{n,n}\quad\textnormal{and}\quad d_{1,1},\ldots,d_{n,1},\ldots,d_{1,n},\ldots,d_{n,n}$$ such that $$b_1\in\langle c_{1,1}\ast d_{1,1},\ldots,c_{n,1}\ast d_{n,1}\rangle_+,\ldots,b_n\in\langle c_{1,n}\ast d_{1,n},\ldots,c_{n,n}\ast d_{n,n}\rangle_+.$$ Now, consider the sub-skew brace $C/\Ann(B)$ generated by $$c_{1,1}+\Ann(B),\ldots,c_{n,n}+\Ann(B),d_{1,1}+\Ann(B),\ldots,d_{n,n}+\Ann(B).$$ By the hypothesis, $C/\Ann(B)$ is finite. But clearly, $\Ann(B)$ is contained in $\Ann(C)$, so Lemma~\ref{lemma1} yields that~$C^{(2)}$ is finite. Since $A$ is a sub-skew brace of $C^{(2)}$, it follows that $A$ is finite and the statement is proved.

\medskip

\noindent(2)\quad Assume first that $B/\Ann(B)$ is finite and both its additive and multiplicative groups have exponent dividing $n$. Let $N=(B,+)$, $X=(B,\circ)/\Ker(\lambda)$ and $G=N\rtimes_\lambda X$. Since the exponent of $(B,\circ)/\Ann(B)$ divides $n$, it follows that the exponent of $X$ also divides~$n$. 

It is easy to see that $\Ann(B)$ as a subgroup of $N$ is contained in the centre $Z(G)$ of $G$, so $G/Z(G)$ is finite of exponent dividing $n^2$. It follows from \cite{Mann} that $[G,G]$ is finite of exponent dividing $g(n^2)$, where $g$ is a function depending only on $n$. Finally, Equation \eqref{equationfinale} also shows that $B\ast B$ is finite of exponent dividing $g(n^2)$. The statement is proved in this case.

In order to deal with the locally finite case, we just need to repeat the proof of~(1), replacing Lemma \ref{lemma1} by the first part of (2).
\end{proof}

\medskip

As a consequence of Theorems  \ref{thgzglf} and \ref{schurlf} we have the following results.

\begin{cor}\label{corschurlf}
Let $B$ be a skew brace with property \PS. Then $B^{(2)}$ is locally finite.
\end{cor}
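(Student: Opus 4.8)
The plan is to obtain this statement as a direct composition of the two immediately preceding theorems, with no new argument required. The hypothesis that $B$ has property \PS is precisely the input needed to run the chain.

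First I would invoke \cref{thgzglf}: since $B$ is a skew brace with property \PS, that theorem gives that the quotient skew brace $B/\Ann(B)$ is locally finite. This is the nontrivial structural fact, but it has already been established, so here it is used as a black box; its proof rests on the fact that $G = (B,+)\rtimes_\lambda(B,\circ)$ is an $FC$-group and that $G/Z(G)$ is therefore locally finite, forcing periodicity of $B/\Ann(B)$, which combines with \cref{periodicimplieslocallyfinite} to give local finiteness.

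Next I would feed this conclusion into \cref{schurlf}(1), whose hypothesis is exactly ``$B/\Ann(B)$ is locally finite.'' Applying it yields that $B^{(2)}$ is locally finite, which is the desired conclusion. Thus the entire proof reduces to the single line: by \cref{thgzglf} the factor $B/\Ann(B)$ is locally finite, whence $B^{(2)}$ is locally finite by \cref{schurlf}(1).

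There is no genuine obstacle to overcome at this stage, as both ingredients are already in place; the only point requiring minimal care is confirming that the hypothesis of \cref{schurlf}(1) matches the output of \cref{thgzglf} verbatim (local finiteness of $B/\Ann(B)$, with no additional exponent restriction), which it does since part~(1) of \cref{schurlf} asks for nothing beyond local finiteness. Hence the corollary follows formally.
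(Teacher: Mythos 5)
Your proposal is correct and matches the paper's own derivation exactly: the paper states this corollary as an immediate consequence of \cref{thgzglf} (which gives local finiteness of $B/\Ann(B)$ under property \PS) composed with \cref{schurlf}(1). Nothing further is needed.
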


\begin{cor}\label{idealtorsionfreeskewlefttrivial}
Let $B$ be a skew brace with property \PS. If $I$ is a torsion-free ideal of $B$, then $I\leq\Ann(B)$.
\end{cor}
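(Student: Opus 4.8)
The plan is to show that every $a\in I$ lies in $\Ker(\lambda)\cap Z(B,+)\cap Z(B,\circ)$, which is exactly $\Soc(B)\cap Z(B,\circ)=\Ann(B)$. The whole argument rests on one principle: any element of $I$ that can be written as one of the ``conjugate-type'' quantities $a\ast b$, $[a,b]_+$ or $[a,b]_\circ$ must vanish, because each such quantity is forced to be periodic while $I$ is torsion-free.

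First I would record the torsion information. Since the class of skew braces with property \PS is closed under forming sub-skew braces, the ideal $I$ again has property \PS, so Theorem~\ref{theoperiodicfc} gives $T_+(I)=T_\circ(I)$; as $I$ is torsion-free this common set is $\{0\}$, so $I$ contains no nonzero element of finite additive order and no nontrivial element of finite multiplicative order. Next, by Corollary~\ref{corschurlf} the ideal $B^{(2)}$ is locally finite, hence periodic, and since $(B,+)$ and $(B,\circ)$ are $FC$-groups, Schur's theorem makes $[B,B]_+$ and $[B,B]_\circ$ locally finite, hence periodic as well.

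Then I would fix $a\in I$ and $b\in B$ and run three parallel arguments. Because $I$ is an ideal we have $I\ast B\subseteq I$, so $a\ast b\in I$; on the other hand $a\ast b\in B^{(2)}$, which is periodic, so $a\ast b$ is a periodic element of $(I,+)$ and therefore $a\ast b=0$. As $b$ is arbitrary, $\lambda_a=\id$, i.e. $a\in\Ker(\lambda)$. For the additive centre, normality of $I$ in $(B,+)$ gives $[a,b]_+=a+(b-a-b)\in I$ (the second summand being the additive conjugate of $-a$), while $[a,b]_+\in[B,B]_+$ is periodic; hence $[a,b]_+=0$ and $a\in Z(B,+)$. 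The multiplicative centre is treated identically: normality of $I$ in $(B,\circ)$ yields $[a,b]_\circ=a\circ(b\circ a^{-1}\circ b^{-1})\in I$, and this element is periodic as a member of $[B,B]_\circ$, so it is a periodic element of $(I,\circ)$ and thus trivial, giving $a\in Z(B,\circ)$. Combining the three memberships yields $a\in\Ann(B)$, as wanted.

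The point to get right---rather than a genuine obstacle---is the bookkeeping that simultaneously places each of these quantities inside $I$ (via the two-sided ideal and normality hypotheses) and inside a periodic subgroup (via Corollary~\ref{corschurlf} and Schur's theorem). The one slightly delicate link is the multiplicative commutator: it is killed using the bridge $T_+(I)=T_\circ(I)$ of Theorem~\ref{theoperiodicfc}, which lets additive torsion-freeness of $I$ annihilate an element of finite \emph{multiplicative} order.
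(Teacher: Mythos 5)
Your proof is correct and follows essentially the same route as the paper's: torsion-freeness of $I$ kills $I\ast B\subseteq I\cap B^{(2)}$ via Corollary~\ref{corschurlf}, and kills $[I,B]_+$ and $[I,B]_\circ$ via the locally finite commutator subgroups of the $FC$-groups $(B,+)$ and $(B,\circ)$, placing $I$ inside $\Ker(\lambda)\cap Z(B,+)\cap Z(B,\circ)=\Ann(B)$. The only difference is expository: you spell out the bridge $T_+(I)=T_\circ(I)$ from Theorem~\ref{theoperiodicfc} for the multiplicative commutators, which the paper uses implicitly when it writes $[I,B]_\circ\leq T_+(I)$.
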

\begin{proof}
By assumptions, $T_+(I)=\{0\}$, so Corollary~\ref{corschurlf} yields \hbox{$I\ast B=B\ast I=\{0\}$.} On other hand, any $FC$-group has a locally finite commutator subgroup and consequently $[I,B]_\circ,[I,B]_+\leq T_+(I)={0}$. The statement is proved.
\end{proof}

\begin{cor}\label{torsionfreeskewlefttrivial}
Let $B$ be a torsion-free skew brace. Then $B$ is a trivial brace.
\end{cor}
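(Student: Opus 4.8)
The plan is to deduce this immediately from Corollary~\ref{idealtorsionfreeskewlefttrivial} by taking the ideal there to be the whole skew brace. (As with the notion of torsion-freeness itself, I read the statement under the standing assumption that $B$ has property \PS, which is what makes Corollary~\ref{idealtorsionfreeskewlefttrivial} applicable.) First I would observe that $B$ is trivially an ideal of itself, and that the hypothesis $T_+(B)=\{0\}$ says precisely that this ideal is torsion-free. Applying Corollary~\ref{idealtorsionfreeskewlefttrivial} with $I=B$ then forces $B\leq\Ann(B)$, whence $B=\Ann(B)$.

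The remainder is just unwinding the definition of the annihilator. Recall that $\Ann(B)=\Soc(B)\cap Z(B,\circ)=\Ker(\lambda)\cap Z(B,+)\cap Z(B,\circ)$, so the equality $B=\Ann(B)$ yields simultaneously $B=\Ker(\lambda)$, $B=Z(B,+)$ and $B=Z(B,\circ)$. The first of these says $\lambda_a=\id$ for every $a\in B$; feeding this into the relation $a\circ b=a+\lambda_a(b)$ gives $a\circ b=a+b$ for all $a,b\in B$, equivalently $a\ast b=\lambda_a(b)-b=0$, so that $B^{(2)}=\{0\}$ and $B$ is a trivial skew brace. The equality $B=Z(B,+)$ says that the additive group $(B,+)$ is abelian. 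A trivial skew brace with abelian additive group is exactly a trivial brace, which is the desired conclusion.

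I do not expect any real obstacle here beyond spotting the correct specialisation: the substantive work has already been carried out in Corollary~\ref{idealtorsionfreeskewlefttrivial} (and, through it, in Corollary~\ref{corschurlf} together with the $FC$-group machinery), and the present statement is simply the clean global consequence of applying that result to $I=B$. The only point worth flagging explicitly is that $B$ qualifies as a torsion-free \emph{ideal} of itself, and that $B=\Ann(B)$ simultaneously encodes the abelianness of $(B,+)$ and the collapse of $\circ$ onto $+$.
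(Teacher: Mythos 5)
Your proof is correct and matches the paper's intended argument: the paper states this corollary without proof as an immediate consequence of Corollary~\ref{idealtorsionfreeskewlefttrivial} applied with $I=B$, exactly as you do (and you rightly note the standing property \PS implicit in the paper's definition of torsion-free). Your unwinding of $B=\Ann(B)=\Ker(\lambda)\cap Z(B,+)\cap Z(B,\circ)$ into the conclusion that $B$ is a trivial brace is the correct final step.
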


\medskip

Theorem~\ref{thgzglf} also has an immediate consequence on the induced solution $(B,r_B)$ of a skew brace $B$ with property \PS.

\begin{cor}
Let $B$ be a skew brace with property \PS. Then the orbits of $r_B$ as an element of $\operatorname{Sym}(B\times B)$ are finite.
\end{cor}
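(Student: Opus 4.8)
The plan is to reduce the orbit of a pair $(a,b)$ under $r_B$ to the orbit of its first coordinate under a single affine self-map of the additive group, and then to invoke the fact (already noted in this section) that $(B,+)$ is an $FC$-group. The key observation is that $r_B$ admits a simple invariant: writing $r_B(a,b)=(a',b')$ with $a'=\lambda_a(b)$ and $b'=(a')^{-1}\circ a\circ b$, a one-line computation gives $a'\circ b'=a'\circ(a')^{-1}\circ a\circ b=a\circ b$. Hence the element $t:=a\circ b$ is constant along the entire $r_B$-orbit of $(a,b)$.

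First I would extract the dynamics of the first coordinate. Using the identity $a\circ b=a+\lambda_a(b)$ from the preliminaries, the first coordinate transforms by $a'=\lambda_a(b)=-a+t$. Moreover, since every point $(a_n,b_n)$ of the orbit satisfies $a_n\circ b_n=t$, we have $b_n=a_n^{-1}\circ t$, so the second coordinate is completely determined by the first. Consequently the projection onto the first coordinate is a bijection between the $r_B$-orbit of $(a,b)$ and the orbit of $a$ under the map $\phi\colon x\mapsto -x+t$ of $(B,+)$, and it suffices to prove that this $\phi$-orbit is finite.

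Next I would analyse $\phi$ itself. A direct computation gives $\phi^2(x)=-(-x+t)+t=-t+x+t$, so $\phi^2$ is conjugation by $-t$ in $(B,+)$; since powers of the single element $t$ commute, induction yields $\phi^{2n}(a)=(-nt)+a+nt$ and $\phi^{2n+1}(a)=(-nt)+\phi(a)+nt$ for every $n\in\mathbb{Z}$. Thus the even iterates of $\phi$ applied to $a$ all lie in the conjugacy class of $a$ in $(B,+)$, while the odd iterates all lie in the conjugacy class of $\phi(a)=-a+t$. Because $B$ has property \PS, its additive group $(B,+)$ is an $FC$-group, so each of these two conjugacy classes is finite; hence the $\phi$-orbit $\{\phi^n(a):n\in\mathbb{Z}\}$ is finite, and by the reduction the $r_B$-orbit of $(a,b)$ is finite, as required.

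The only mild obstacle is the possible non-commutativity of $(B,+)$, which rules out the naive guess $\phi^n(a)=a+nt$ and forces the passage through the conjugation description of $\phi^2$; once that identity is in place the argument is immediate. I note in passing that, phrased this way, the proof uses only the $FC$-property of $(B,+)$, so it does not in fact require the full strength of Theorem~\ref{thgzglf}.
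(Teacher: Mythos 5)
Your proof is correct, and it takes a genuinely different route from the paper's. The paper argues structurally: given $(a,b)$, it passes to the sub-skew brace $C$ generated by $\{a,b\}$, uses Lemma~\ref{lemma1} (together with Theorem~\ref{theoremfg}) to conclude that $n=|C:\operatorname{Ann}(C)|$ is finite, and then cites \cite[Lemma 4.12 (1)]{Vendramin} to get $r_C^{2n}=\operatorname{id}$, so every orbit inside $C\times C$ has length at most $2n$. You instead analyse the dynamics of $r_B$ directly, and every step checks out: the invariance of $t=a\circ b$ along the orbit (in both directions, since $r_B$ preserves the product of the two coordinates), the fact that each orbit point has second coordinate $a_n^{-1}\circ t$ and so is determined by its first coordinate, the reduction to the affine map $\phi\colon x\mapsto -x+t$ of $(B,+)$, and the identity $\phi^{2}(x)=-t+x+t$, which places the even iterates of $a$ in the additive conjugacy class of $a$ and the odd ones in that of $-a+t$; finiteness then follows because $(B,+)$ is an $FC$-group, a fact the paper records for skew braces with property \PS. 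What your route buys: it is elementary and self-contained (no appeal to an external lemma), and it isolates the true hypothesis --- only the additive group being an $FC$-group is used, the multiplicative half of property \PS playing no role. What the paper's route buys: a uniform conclusion --- on each finitely generated sub-skew brace $C$ the permutation $r_C$ has finite order $2n$, whereas your bound (the sum of the sizes of the two conjugacy classes) varies from pair to pair. One small correction to your closing aside: the paper's proof rests on Lemma~\ref{lemma1} rather than on Theorem~\ref{thgzglf}, but your underlying point --- that the full strength of property \PS is not needed for this corollary --- is accurate.
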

\begin{proof}
Let $F$ be any subset of $B$. It follows from Lemma~\ref{lemma1} that the sub-skew brace $C$ generated by $F$ is finite over $\Ann(C)$. Let $n$ be the index of $\Ann(C)$ in~$C$. Then \cite[Lemma 4.12 (1)]{Vendramin} shows that $r_C^{2n}(a,b)=(a,b)$, for all $a,b\in C$ and completes the proof of the statement. 
\end{proof}

\medskip

Let $(X,r)$ be a non-degenerate solution of the YBE and $\iota$  the natural map $\iota:\, X\longrightarrow G(X,r)$. If $\iota$ is injective, we say that $(X,r)$ is \emph{injective}. On the other hand, it is possible for $\iota$ to be non-injective. 
Take for instance $X=\{1,2,3,4\}$, $f=(1\ 2)$ and $g=(3\ 4)$;  the map $r:X\times X \to X\times X$ given by $r(x,y)=(f(y),g(x))$ is a non-degenerate solution of the YBE but it is not injective since $1=2$ and $3=4$ in $G(X,r)$.
In this case, the injectivization of $(X,r)$ can be useful (see \cite{Soloviev}). If $\tilde{X}=\iota(X)$ and $\tilde{r}:\, \tilde{X}\times\tilde{X}\longrightarrow\tilde{X}\times\tilde{X}$ is the restriction of $r_{G(X,r)}$ to $\tilde{X}\times\tilde{X}$, then the pair $(\tilde{X},\tilde{r})$ is an injective non-degenerate solution of the YBE called the {\it injectivization} of $(X,r)$ and is such that $G(X,r)\simeq G(\tilde{X},\tilde{r})$. Recall also that $(X,r)$ is a {\it trivial} solution if $r(x,y)=(y,x)$ for all $x,y\in X$. Note here that the previous example shows that the injectivization of a non-involutive and non-degenerate solution may be involutive (even trivial).

The trivial solution is involutive, and the following result shows that if $G(X,r)$ has property \PS, then the converse holds for the injectivization of $(X,r)$.

\begin{cor}\label{corinvolutive}
Let $(X,r)$ be a non-degenerate finite solution of the YBE such that $G(X,r)$ has property \PS. Then $(\tilde{X},\tilde{r})$ is involutive if and only if $(\tilde{X},\tilde{r})$ is trivial.
\end{cor}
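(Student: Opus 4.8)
The plan is to prove the two implications separately. The implication \emph{trivial $\Rightarrow$ involutive} is immediate: if $\tilde r(a,b)=(b,a)$ for all $a,b\in\tilde X$, then $\tilde r^2=\operatorname{id}$. (This is already recorded in the discussion preceding the statement.) Hence the whole content lies in showing that if $(\tilde X,\tilde r)$ is involutive, then it is trivial, and my whole effort would go there.

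First I would set up the finiteness and torsion-freeness input. The solution $(\tilde X,\tilde r)$ is finite, since $\tilde X=\iota(X)$ and $X$ is finite; it is non-degenerate, being the injectivization of a non-degenerate solution; and by hypothesis it is involutive. The crucial external ingredient is the classical fact that the structure group of a finite involutive non-degenerate solution is torsion-free (see, for instance, \cite{ESG01}). Since $G(\tilde X,\tilde r)\simeq G(X,r)$, this means precisely that the multiplicative group $(B,\circ)$ of $B:=G(X,r)$ is torsion-free, i.e.\ $T_\circ(B)=\{0\}$.

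Next I would transfer torsion-freeness to the whole skew brace and invoke the machinery of Section~\ref{fcgroups}. Property \PS is an isomorphism invariant, so $B\simeq G(\tilde X,\tilde r)$ has property \PS; thus \cref{theoperiodicfc}\,(1) gives $T_+(B)=T_\circ(B)=\{0\}$, and $B$ is a torsion-free skew brace. By \cref{torsionfreeskewlefttrivial}, $B$ is then a \emph{trivial brace}, which means $a+b=a\circ b$ for all $a,b\in B$ and that $(B,+)=(B,\circ)$ is abelian; in particular $\lambda_a=\operatorname{id}$ (equivalently $a\ast b=0$) and $a\circ b=b\circ a$ for all $a,b\in B$. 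Reading off triviality of $\tilde r$ is now a one-line computation: for $a,b\in\tilde X$,
\[
\tilde r(a,b)=\bigl(\lambda_a(b),\,\lambda_a(b)^{-1}\circ a\circ b\bigr)=\bigl(b,\,b^{-1}\circ a\circ b\bigr)=(b,a),
\]
so $(\tilde X,\tilde r)$ is the trivial solution.

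The one genuinely non-self-contained step, and thus the main obstacle, is the torsion-freeness of the structure group of a finite involutive solution; everything else is an application of the torsion theory for skew braces with property \PS. It is also worth stressing that the hypothesis \PS is exactly what makes the statement non-vacuous: an involutive non-degenerate solution always has torsion-free structure group, but a torsion-free \emph{$FC$}-group is abelian (its derived subgroup is locally finite and torsion-free, hence trivial), so a non-trivial involutive solution has a non-abelian torsion-free structure group, which therefore fails to be an $FC$-group and fails \PS. Finally, one should check the routine point that the $\lambda$-map and the operation $\circ$ of $B$ restrict so that $\tilde r$ coincides with $r_B$ on $\tilde X\times\tilde X$, which is what legitimizes the displayed computation.
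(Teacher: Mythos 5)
Your proposal is correct and takes essentially the same route as the paper: the paper's own proof likewise deduces torsion-freeness of $G(X,r)$ from involutivity of $(\tilde{X},\tilde{r})$ (citing \cite[Theorem 6.5]{MR4256133}) and then applies \cref{torsionfreeskewlefttrivial} to conclude that the structure brace, and hence the solution, is trivial. The extra steps you supply---invoking \cref{theoperiodicfc}(1) to pass between multiplicative and additive torsion-freeness, and the explicit computation $\tilde{r}(a,b)=(b,a)$ for a trivial brace---are exactly the details the paper leaves implicit.
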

\begin{proof}
If $(\tilde{X},\tilde{r})$ is involutive, then $G(X,r)$ is torsion-free (see \cite[Theorem 6.5]{MR4256133}) and so is a trivial brace by Corollary~\ref{torsionfreeskewlefttrivial}. Then $(\tilde{X},\tilde{r})$ is the trivial solution. 
\end{proof}

\begin{thm}
Let $B$ be a skew brace. If $I$ is a torsion-free sub-brace of~$\Ann(B)$ such that $B/I$ is a periodic skew brace, then $B$ has property \PS.
\end{thm}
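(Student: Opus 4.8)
The plan is to fix an arbitrary $x\in B$ and verify property \ps for it by showing that each of the four subgroups $C_x^+$, $\Fix^r(x)$ of $(B,+)$ and $C_x^\circ$, $\Fix^l(x)$ of $(B,\circ)$ has finite index; this is equivalent to property \ps for $x$, because $\Fix^r(x)\cap C_x^+$ has finite index in $(B,+)$ exactly when both $\Fix^r(x)$ and $C_x^+$ do, and likewise on the multiplicative side. First I would record that $I$ is in fact an ideal, so that $B/I$ is a skew brace: since $I\le\Ann(B)=\bigcap_{y}\Ann(y)$, every $i\in I$ is central in $(B,+)$ and in $(B,\circ)$, is fixed by every $\lambda_b$, and satisfies $i\ast b=b\ast i=0$ for all $b\in B$. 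By hypothesis $B/I$ is a periodic skew brace, hence it has property \PS and is additively periodic, so Theorem~\ref{theoperiodicfc} makes it multiplicatively periodic as well; in particular $(B/I,+)$ and $(B/I,\circ)$ are $FC$-groups.

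The mechanism is then uniform across the four cases. Writing $\bar x=x+I$, I would consider the full preimages in $B$ of the additive centraliser, the right fixer, the multiplicative centraliser and the left fixer of $\bar x$ in $B/I$, namely $P^+=\{h\in B:[h,x]_+\in I\}$, $Q^r=\{g\in B:x\ast g\in I\}$, $P^\circ=\{h\in B:[h,x]_\circ\in I\}$ and $Q^l=\{g\in B:g\ast x\in I\}$. Because $B/I$ has property \PS and $(B/I,+),(B/I,\circ)$ are $FC$-groups, $\bar x$ has finitely many conjugates of each type, so $P^+,Q^r$ have finite index in $(B,+)$ and $P^\circ,Q^l$ have finite index in $(B,\circ)$. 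On each of these domains the relevant conjugation map into $I$ is a homomorphism: using that $I$ is central and that $I\ast B=B\ast I=0$, the identities $a\ast(b+c)=a\ast b+b+a\ast c-b$ and $(a\circ b)\ast c=a\ast(b\ast c)+b\ast c+a\ast c$ collapse to additivity, so $h\mapsto[h,x]_+$, $g\mapsto x\ast g$, $h\mapsto[h,x]_\circ$ and $g\mapsto g\ast x$ give homomorphisms $P^+\to I$, $(Q^r,+)\to(I,+)$, $P^\circ\to I$ and $(Q^l,\circ)\to(I,+)$ whose kernels are exactly $C_x^+$, $\Fix^r(x)$, $C_x^\circ$ and $\Fix^l(x)$.

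The decisive step is to show that each of these four homomorphisms is trivial, so that its kernel coincides with its finite-index domain. This is where torsion-freeness of $I$ meets periodicity of $B/I$. On the additive side, periodicity of $(B/I,+)$ yields an integer $k$ with $kx\in I$ (resp.\ $kg\in I$ for a given $g\in Q^r$); feeding it into the homomorphism and using $[h,kx]_+=k[h,x]_+$ (resp.\ $x\ast(kg)=0$, valid since $kg\in I\le\Ann(B)$) shows the image is annihilated by $k$, hence trivial because $I$ is torsion-free. The multiplicative side is identical, with $x^m\in I$ (resp.\ $g^m\in I$) supplied by periodicity of $(B/I,\circ)$ and with $[h,x^m]_\circ=[h,x]_\circ^m$ and $g^m\ast x=0$. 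Consequently $C_x^+=P^+$, $\Fix^r(x)=Q^r$, $C_x^\circ=P^\circ$ and $\Fix^l(x)=Q^l$, all of finite index; thus $x$ has property \ps, and as $x$ was arbitrary, $B$ has property \PS.

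I expect the main obstacle to be organisational rather than conceptual. The delicate point is checking that each of the four maps is genuinely additive on the \emph{enlarged} domains $P^+,Q^r,P^\circ,Q^l$, and not merely on the kernels; this is precisely where centrality of $I$ and the relations $I\ast B=B\ast I=0$ are indispensable. The remaining care is bookkeeping: tracking which periodicity (additive or multiplicative) and which argument ($x$ itself or the running variable) supplies the element $kx$ or $x^m$ that lands in $\Ann(B)$ and annihilates the relevant bracket or $\ast$-product. Once this is arranged, torsion-freeness of $I$ closes each case in a single line.
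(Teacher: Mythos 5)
Your proof is correct, but it takes a genuinely different route from the paper's. Both arguments share the same skeleton---work modulo $I$ (which is indeed an ideal), pull back finite-index subgroups from $B/I$ (which, being periodic in the paper's sense, has property \PS), and use torsion-freeness of $I$ to promote ``the bracket lies in $I$'' to ``the bracket is zero''---but the decisive mechanisms differ. The paper treats the two halves of property \ps asymmetrically: for the fixers it shows $c\ast b\in I$ for all $c$ in a finite-index subgroup and then quotes local finiteness of $B^{(2)}$ to see that $c\ast b$ is additively periodic, hence $0$ (strictly speaking, the citation of Corollary~\ref{corschurlf} there presupposes property \PS of $B$ itself; what is really used is Theorem~\ref{schurlf}(1) together with Corollary~\ref{periodicimplieslocallyfinite}, since $B/\Ann(B)$ is a quotient of the locally finite $B/I$); for the centralisers it cites Chernikov's theorem on central extensions of torsion-free abelian groups by periodic $FC$-groups. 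You instead handle all four subgroups $C_x^+$, $\Fix^r(x)$, $C_x^\circ$, $\Fix^l(x)$ uniformly: each of the maps $h\mapsto[h,x]_+$, $g\mapsto x\ast g$, $h\mapsto[h,x]_\circ$, $g\mapsto g\ast x$ is a homomorphism from the finite-index preimage into $I$ (centrality of $I$ and $B\ast I=I\ast B=\{0\}$ collapse the defining identities), and periodicity of $B/I$ plus torsion-freeness of $I$ force each homomorphism to vanish. In effect you inline the standard proof of Chernikov's theorem and extend the same commutator-map trick to the $\ast$-operation, which buys a self-contained, more elementary argument bypassing both external ingredients; the paper's version is shorter on the page because those ingredients are already available there. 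One detail you should make explicit: in the multiplicative cases you deduce $[h,x]_\circ=0$ from $[h,x]_\circ^{\,m}=0$ (and $g\ast x=0$ from $m(g\ast x)=0$) ``because $I$ is torsion-free'', but torsion-freeness is an additive hypothesis; you need the immediate observation that $\circ$ and $+$ coincide on $I$, since $\lambda_i=\id$ for every $i\in I\subseteq\Ann(B)$, so that multiplicative powers of elements of $I$ are additive multiples and the additive hypothesis applies.
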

\begin{proof}
Let $b\in B$. We need to show that the indices $$\big|(B,+):\operatorname{Fix}^r(b)\cap C_b^+\big|\quad\textnormal{ and }\quad\big|(B,\circ):\operatorname{Fix}^l(b)\cap C_b^\circ\big|$$ are finite. Since $B/I$ has property \PS, we have that  $$\big|(B/I,\circ):\operatorname{Fix}^l_{B/I}(b+I)\big|<\infty.$$ Let $U/I=\operatorname{Fix}^l_{B/I}(b+I)$. Then $U$ is a subgroup of finite index of $(B,\circ)$. Moreover, if $c\in U$, then $c\ast b$ belongs to $I$. But $B^{(2)}$ is locally finite by Corollary \ref{corschurlf}, and so~\hbox{$c\ast b=0$.} It follows that $\operatorname{Fix}^l_B(b)$ has finite index in $(B,\circ)$. Similarly, we prove that $\operatorname{Fix}^r_B(b)$ has finite index in $(B,+)$.

A theorem of Chernikov \cite{Cernikov} (which is in fact the group theoretical analogue of the result we are proving) now shows that also the indices $$|(B,+):C_b^+|\quad\textnormal{and}\quad|(B,\circ):C_b^\circ|$$ are finite, and the proof of the statement follows.
\end{proof}

\medskip

One of the most interesting connections between arbitrary and periodic skew braces is given by the following result.

\begin{thm}\label{periodicreduction}
Let $B$ be a skew brace with property \PS. Then $B$ is a subdirect product of a trivial brace $C$ and a periodic skew  brace $D$. Moreover, $C$ and $D$ are homomorphic images of $B$.
\end{thm}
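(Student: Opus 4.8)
The plan is to exhibit $B$ as a subdirect product by producing two ideals $I$ and $J$ of $B$ with $I\cap J=\{0\}$ such that $B/J$ is a trivial brace and $B/I$ is a periodic skew brace. Once this is done, the diagonal map $b\mapsto (b+I,\,b+J)$ embeds $B$ into $B/I\times B/J$ with surjective projections, so that $C=B/J$ and $D=B/I$ are the required homomorphic images. For the trivial brace factor I would take $J=T_+(B)$. By \cref{theoperiodicfc}, $J$ is an ideal and $T_+\big(B/T_+(B)\big)=\{0\}$, so $B/J$ is a torsion-free skew brace; being a homomorphic image of $B$, it still has property \PS, and \cref{torsionfreeskewlefttrivial} then gives that $C=B/J$ is a trivial brace.

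For the periodic factor I would seek a torsion-free ideal sitting inside $\Ann(B)$. The key observation is that $\Ann(B)$ is central in both operations, that $+$ and $\circ$ coincide on it, and—crucially—that $\lambda_b$ fixes $\Ann(B)$ pointwise: indeed $b\ast g=0$ for every $g\in\Ann(B)$ and $b\in B$, so $\lambda_b(g)=\lambda_b(g)-g+g=g$. Consequently \emph{every} subgroup of the abelian group $\Ann(B)$ is automatically an ideal of $B$, since it is central in $(B,+)$ and in $(B,\circ)$ and is $\lambda$-invariant. A routine Zorn's lemma argument in the abelian group $\Ann(B)$ then yields a subgroup $I$ maximal with respect to $I\cap T_+(B)=\{0\}$; by maximality $\Ann(B)/I$ is a torsion group, while $I$ is torsion-free, and by the previous remark $I$ is an ideal of $B$.

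It then remains to check that $D=B/I$ is periodic. Additively, $\Ann(B)/I$ is a torsion subgroup of $(B/I,+)$ and the quotient $(B/I)\big/(\Ann(B)/I)\cong B/\Ann(B)$ is locally finite, hence periodic, by \cref{thgzglf}. Since periodicity passes through group extensions—if $g^n\in M$ with $M$ periodic then $g$ itself is periodic—the group $(B/I,+)$ is periodic, so $D$ is a periodic skew brace. Finally $I\cap J=\{0\}$ because $I$ is torsion-free whereas $J=T_+(B)$ consists of torsion elements, and this completes the subdirect decomposition.

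I expect the main obstacle to be the construction of $I$: one must simultaneously arrange that $I$ is torsion-free, that $\Ann(B)/I$ (and hence $B/I$) is periodic, and that $I$ is a genuine ideal of $B$. The last requirement is what could go wrong, since a subgroup of $\Ann(B)$ must in principle be shown $\lambda$-invariant; the argument works cleanly only because of the non-obvious fact that $\lambda$ acts trivially on $\Ann(B)$, which frees us to select $I$ as any suitable subgroup of $\Ann(B)$ without any further verification of $\lambda$-invariance.
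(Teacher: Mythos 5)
Your proof is correct and takes essentially the same route as the paper: the trivial factor is $B/T_+(B)$ (via \cref{theoperiodicfc} and \cref{torsionfreeskewlefttrivial}), the periodic factor is $B/I$ for a torsion-free ideal $I\subseteq\Ann(B)$ produced by Zorn's lemma --- using, exactly as the paper does, that every subgroup of the additive group of $\Ann(B)$ is an ideal of $B$ (a fact the paper merely asserts and you actually justify via the triviality of $\lambda$ on $\Ann(B)$) --- and the diagonal map then gives the subdirect embedding. The only divergence is the Zorn condition: the paper takes a maximal \emph{free abelian} subgroup of $\Ann(B)$, while you take a subgroup maximal with respect to meeting $T_+(B)$ trivially; your condition is in fact the more robust one, since it is manifestly closed under unions of chains, whereas free abelianness is not (an ascending union of free abelian groups need not be free abelian), so your variant quietly repairs the paper's appeal to Zorn's lemma while reaching the same decomposition.
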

\begin{proof}
Let $T=T_+(B)$. By Corollary \ref{torsionfreeskewlefttrivial},  $B/T$ is a trivial brace. Now, every subgroup of the additive group of $\Ann(B)$ is an ideal of $B$. Let $F$ be a maximal free abelian subgroup of $\Ann(B)$ (this certainly exists by Zorn's lemma). Then~$F$ is an ideal of $B$ and $\Ann(B)/F$ is periodic. Thus $B/F$ is also periodic. 

Since $F\cap T=\{0\}$, we have that the map $$b\in B\mapsto \big(b+T,\, b+F)\in B/T\times B/F$$ is a monomorphism of skew  braces and the statement follows.
\end{proof}

\medskip

%
%

\medskip

In dealing with periodic skew braces, one would hope an analogue of Dietzmann's lemma for $FC$-groups would hold. Unfortunately, it is not quite clear if a result of this type holds or not.

\begin{question}\label{question1}
Let $B$ be a periodic skew brace. Is it true that any finitely generated ideal of $B$ is finite?
\end{question}

A positive answer to the above question would provide a great deal of information about the structure of an arbitrary skew brace with property \PS. In this context, the best we can do is Theorem~\ref{extension}. Here, an ideal $I$ of a skew brace $B$ with property \PS is said to be {\it good} if every finitely generated ideal $J$ of $B$ with $J\subseteq I$ is finitely generated as a skew brace. Using this terminology, Question~\ref{question1} can be rephrased as follows: is every periodic skew brace good? Actually, as a consequence of Theorems \ref{thgzglf}, \ref{extension} and \ref{goodsocle}, it turns out that the previous question is equivalent to the following one: is every skew brace with property \PS good?

It is clear that every restricted direct product of finite skew braces is a good skew brace with property \PS. We anticipate that, in Section~\ref{bfc}, we will show that this is also the case for the class of skew braces with property \PBS (see Definition~\ref{defbfc}); in particular, this will be a direct consequence of Corollary~\ref{corextension} and The\-o\-rem~\ref{thm1}.

\begin{thm}\label{extension}
Let $B$ be a skew brace with property \PS. If $I\subseteq J$ are ideals of $B$ such that $I$ and $J/I$ are good ideals of $B$ and $B/I$, respectively, then $J$ is a good ideal of $B$.
\end{thm}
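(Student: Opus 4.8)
The plan is to verify the defining property of a good ideal directly: I take an ideal $K\subseteq J$ that is finitely generated as an ideal, say by $a_1,\dots,a_r$, and exhibit a finite generating set for $K$ as a skew brace. The two hypotheses will control two ``halves'' of $K$: goodness of $J/I$ handles $K$ modulo $I$, while goodness of $I$ handles a finitely generated ideal living inside $I$ that captures the difference. First I would reduce modulo $I$. The image $\bar K=(K+I)/I$ is an ideal of $B/I$, generated as such by the images of $a_1,\dots,a_r$, and $\bar K\subseteq J/I$; since $J/I$ is good, $\bar K$ is finitely generated as a skew brace. Choosing $g_1,\dots,g_p\in K$ whose images generate $\bar K$, put $K_0=\langle a_1,\dots,a_r,g_1,\dots,g_p\rangle$, the sub-skew brace they generate. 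Then $K_0$ is finitely generated, the projection $K_0\to\bar K$ is onto, and the ideal generated by $K_0$ is exactly $K$. As $K_0$ is a finitely generated skew brace with property \PS (property \PS passes to sub-skew braces), \cref{theoremfg} guarantees that the group $(K_0,\circ)$ is finitely generated; fix $\circ$-generators $f_1,\dots,f_s$ of $K_0$.

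Next I would build a finite subset of $I$ out of conjugacy data. For each $f_j$ and each $b\in B$, the four conjugates $b+f_j-b$, $b\circ f_j\circ b^{-1}$, $b\ast f_j$ and $f_j\ast b$ range over finite sets as $b$ varies, by property \PS; this produces finitely many conjugates $c$, each lying in $K$. Since $K_0\to\bar K$ is onto, each such $c$ agrees modulo $I$ with some $s_c\in K_0$, so $c-s_c\in K\cap I$. Let $T$ be the finite set of all these differences $c-s_c$ and let $P$ be the ideal of $B$ generated by $T$. By construction $P\subseteq I$ and $P$ is finitely generated as an ideal, so goodness of $I$ makes $P$ finitely generated as a skew brace. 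I then claim that $S:=K_0+P$ — a subgroup of $(B,+)$, since $P$ is normal in $(B,+)$ — is an ideal of $B$. Granting this, $S$ is an ideal containing every $a_i$, hence $S\supseteq K$; as $S\subseteq K$ trivially, $S=K$, and $K=K_0+P$ is finitely generated as a skew brace, being the join of the two finitely generated skew braces $K_0$ and $P$.

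The heart of the argument is the claim that $S=K_0+P$ is an ideal, and the strategy is to prove $\lambda$-invariance first and bootstrap from it. Writing a typical element of $S$ as $k+q$ with $k\in K_0$, $q\in P$, and each $k$ as a $\circ$-word in $f_1,\dots,f_s$, I would show by induction on the length of this $\circ$-word that $\lambda_b(k)\in S$ for every $b\in B$: the base case is the definition of $T$ (note $\lambda_b(f_j)=f_j+b\ast f_j$), and the inductive step uses $\lambda_b(x\circ y)=\lambda_b(x)+\lambda_{b\circ x}(y)$ together with $\lambda_{b'}(x^{-1})=-\lambda_{b'\circ x^{-1}}(x)$, the ``external'' element being arbitrary throughout. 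Since $\lambda_b(q)\in P$, this gives $\lambda_b(S)\subseteq S$ for all $b$, and then $S$ is automatically a sub-skew brace, because $x\circ y=x+\lambda_x(y)$ and $x^{-1}=\lambda_{x^{-1}}(-x)$ keep one inside $S$. A completely analogous induction — now invoking the full $\lambda$-invariance, the identity $(x\circ y)\ast b=x\ast(y\ast b)+y\ast b+x\ast b$, the derived relation $x^{-1}\ast b=-\lambda_{x^{-1}}(x\ast b)$, and the closure $S\ast S\subseteq S$ of the sub-skew brace $S$ — shows $K_0\ast B\subseteq S$, and hence (absorbing the $P$-part via $P\ast B\subseteq P$) that $S\ast B\subseteq S$, which is equivalent to $S$ being normal in $(B,\circ)$. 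Normality of $S$ in $(B,+)$ follows by the same scheme applied to $x\mapsto b+x-b$, using additivity of additive conjugation and the rewriting $b+\lambda_k(y)-b=\lambda_k\big(\lambda_k^{-1}(b)+y-\lambda_k^{-1}(b)\big)$ to reduce to shorter words and to the already-proved $\lambda$-invariance.

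The step I expect to be the main obstacle is precisely the bookkeeping in these inductions: each time a conjugation or a $\ast$ is pushed across a $\circ$ (or a $\circ$-inverse), one must check that the auxiliary terms produced are either conjugates of strictly shorter $\circ$-words — to which the inductive hypothesis applies, with a new external element — or lie in $S$ because $S$ is $\lambda$-invariant and closed under $+,\circ,\ast$, or lie in $P$ because $P$ is an ideal. The one structural input that makes this feasible is \cref{theoremfg}: representing elements of $K_0$ purely multiplicatively lets us avoid ever splitting the first argument of $\ast$ across a sum, which is the case that has no clean distributive identity. Property \PS is used only to ensure that $T$ is finite, and the two goodness hypotheses only to finitely generate $\bar K$ and $P$ respectively.
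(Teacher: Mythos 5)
Your proof is correct, and although it opens with the same two moves as the paper's --- goodness of $J/I$ supplies a finitely generated skew brace covering $K$ modulo $I$, and property \PS plus goodness of $I$ turn a finite set of conjugation defects inside $I$ into an ideal $P$ that is finitely generated as a skew brace --- the way you finish is genuinely different. The paper's argument is structural: it first observes, via \cref{theoperiodicfc,schurlf}, that finitely generated skew braces with property \PS satisfy the ascending chain condition on sub-skew braces, so that it suffices to trap the ideal in question inside \emph{some} finitely generated sub-skew brace, and so that ideals which are finitely generated as skew braces may be factored out; it then splits into two cases according to whether the lift $K/I$ is finite or a finitely generated torsion-free \emph{trivial} brace, and in the trivial (abelian) case the verification that the relevant sub-skew brace is an ideal is nearly immediate after quotienting away the defect ideals. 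You avoid the ACC, the quotient reductions and the case split entirely: \cref{theoremfg} hands you multiplicative generators of $K_0$, and you verify the ideal axioms for $S=K_0+P$ directly, by induction on $\circ$-word length, ending with the exact equality $K=K_0+P$. Your route buys a uniform, quotient-free argument with an explicit description of $K$ and no appeal to the torsion theory; its cost is the three bookkeeping inductions ($\lambda$-invariance, $\ast$-stability, additive normality), which the paper's structural reduction renders unnecessary. One step you should write out explicitly rather than describe as ``absorbing'': passing from $K_0\ast B\subseteq S$ and $P\ast B\subseteq P$ to $S\ast B\subseteq S$ cannot be done by splitting the first argument of $\ast$ across the sum $k+q$, since no distributive identity exists there; instead rewrite $k+q=k\circ\lambda_{k^{-1}}(q)$, note that $\lambda_{k^{-1}}(q)\in P$, and apply $(x\circ y)\ast b=x\ast(y\ast b)+y\ast b+x\ast b$ --- the same ``purely multiplicative'' representation trick you already invoke inside $K_0$, extended to all of $S$.
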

\begin{proof}
Let $F$ be a finite subset of $J$ (containing its additive inverses). We need to show that the ideal $L$ generated by $F$ in $B$ is finitely generated as a skew brace. 

Note that a combination of Theorem~\ref{theoperiodicfc} and Theorem~\ref{schurlf} shows that a finitely generated skew brace with property \PS satisfies the ascending chain condition on sub-skew braces (recall that a finitely generated $FC$-group has a finite commutator subgroup), so to achieve our goal, we just need to prove that $L$ is contained in some finitely generated sub-skew brace of $B$; this remark also allows us to factor out every ideal which is finitely generated as a skew brace.

Now, the ideal $K/I$ generated by the image of $F$ in $B/I$ is finitely generated as a skew brace, so it follows from Theorem~\ref{theoperiodicfc} that $T_+(K/I)$ is finite; the same result also shows that $H/I=T_+(K/I)$ is an ideal of $B/I$. Moreover, Theorem~\ref{schurlf} shows that $K/H$ is a finitely generated torsion-free trivial brace. It is therefore enough to split the proof in two cases according to $K/I$ being finite or a finitely generated torsion-free trivial brace.

Let us first deal with the latter case, \hbox{i.e.} $K/I$ is a torsion-free trivial brace generated by $x_1,\ldots,x_n$. Let $A$ be the sub-skew brace of $B$ generated by $F$ and $x_1,\ldots,x_n$. Then $A$ satisfies the ascending chain condition on sub-skew braces, and consequently, $A\cap I$ is a finitely generated skew brace. Since~$I$ is good, the ideal $M$ generated by~\hbox{$A\cap I$} is finitely generated as a skew brace and, factorising by $M$, we may assume $A\cap I=\{0\}$. In particular, $A$ is a trivial brace generated by $x_1,\ldots,x_n$. Since $B$ has property \PS, there are finitely many possibilities for $x_i^{\circ,g}=g\circ x_i\circ g^{-1}$, where $g\in B$ and $i=1,\ldots,n$, and, by the hypothesis on $K/I$, each of these possibilities can be written as a linear combination $u(i,g)=m_1x_1+\ldots+m_nx_n$ modulo $I$. Similarly, there are finitely many possibilities for the elements of type $x_i^{+,g}=g+x_i-g$ (resp., $g\ast x_i$), where $g\in B$ and $i=1,\ldots,n$, and each of these can be written as a linear combination $v(i,g)$ (resp., $z(i,g)$) modulo $I$ of $x_1,\ldots,x_n$. Thus the set $$U=\big\{u(i,g)-x_i^{\circ,g},\; v(i,g)-x_i^{+,g},\; z(i,g)-g\ast x_i:\, g\in B,\, i=1,\ldots,n\big\}$$ is finite and is contained in $I$, so the ideal generated by $U$ is finitely generated as a skew brace and then we may factor it out. Now, since $A$ is generated additively and multiplicatively by $x_1,\ldots,x_n$, we have that $A$ is a normal subgroup of both $(B,+)$ and $(B,\circ)$. Moreover, $g\ast x$ belongs to $A$, for any $g\in B$ and $x\in A$, since $g\ast x_i\in A$, for every $i$ and we may write $x=v_1x_1+\ldots+v_nx_n$ for some integers $v_1,\ldots,v_n$. It follows that $A$ is an ideal which is finitely generated as a skew brace and the statement is proved in this case.

We now turn to the case in which $K/I$ is finite. This case can essentially be dealt with as above when we replace $x_1,\ldots,x_n$ by a complete set of representatives of $K$ modulo $I$.
\end{proof}

\begin{cor}\label{corextension}
Let $B$ be a skew brace with property \PS. If $$\{0\}=I_0\subseteq I_1\subseteq \ldots I_\alpha\subseteq I_{\alpha+1}\subseteq\ldots\; J=\bigcup_{\alpha<\mu} I_\alpha$$ is an ascending chain of ideals whose factors are good, then $J$ is a good ideal of $B$.
\end{cor}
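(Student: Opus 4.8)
The plan is to reduce the transfinite statement to the two-step case already settled in Theorem~\ref{extension}, by a transfinite induction on the ordinals $\alpha<\mu$ showing that each $I_\alpha$ is a good ideal of $B$; the conclusion for $J$ will then follow immediately. Throughout I keep careful track of the distinction encoded in the definition of \emph{good}: an ideal is good when every ideal that is finitely generated in the ideal-theoretic sense and contained in it is in fact finitely generated as a sub-skew brace, the latter being the stronger of the two notions of finite generation.

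First I would set up the induction. The base case $\alpha=0$ is immediate, since $I_0=\{0\}$ and the only ideal contained in it is $\{0\}$, which is trivially finitely generated as a skew brace. For the successor step, assume $I_\alpha$ is good; the hypothesis on the chain guarantees that the factor $I_{\alpha+1}/I_\alpha$ is a good ideal of $B/I_\alpha$, so applying Theorem~\ref{extension} with $I=I_\alpha$ and $J=I_{\alpha+1}$ yields that $I_{\alpha+1}$ is good.

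The limit step is where the ascending-union structure is genuinely used, and it is handled by a finiteness argument rather than by Theorem~\ref{extension}. Let $\lambda\le\mu$ be a limit ordinal and suppose $I_\beta$ is good for every $\beta<\lambda$. Let $L$ be a finitely generated ideal of $B$ with $L\subseteq\bigcup_{\beta<\lambda}I_\beta$, and let $F$ be a finite generating set of $L$ as an ideal. Each element of $F$ lies in some $I_\beta$ with $\beta<\lambda$; since $F$ is finite and $\lambda$ is a limit ordinal, the supremum $\beta_0$ of these finitely many indices is still strictly below $\lambda$, so that $F\subseteq I_{\beta_0}$ and hence $L\subseteq I_{\beta_0}$, the latter being an ideal. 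As $I_{\beta_0}$ is good by the inductive hypothesis, $L$ is finitely generated as a skew brace. This shows that $\bigcup_{\beta<\lambda}I_\beta$ is good; applying this with $\lambda=\mu$ when $\mu$ is a limit ordinal, and noting that $J=I_\nu$ is covered by the successor step when $\mu=\nu+1$, gives that $J$ is a good ideal of $B$.

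The argument presents no serious obstacle once Theorem~\ref{extension} is available; the only points requiring care are, in the successor step, that Theorem~\ref{extension} must be applied with the factor taken \emph{inside} $B/I_\alpha$, which is precisely the form in which the hypothesis on the factors of the chain is phrased, and, in the limit step, the bookkeeping ensuring that a finitely generated ideal really does possess a \emph{finite} ideal-generating set, so that all its generators are absorbed by a single earlier term $I_{\beta_0}$ of the chain.
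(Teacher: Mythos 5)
Your proof is correct and is essentially the paper's argument: the paper runs a minimal-counterexample version of your transfinite induction, ruling out a successor ordinal via Theorem~\ref{extension} and a limit ordinal via the fact that a union of a chain of good ideals is good. The only difference is cosmetic --- you spell out the limit-case fact (a finite ideal-generating set lands in a single term $I_{\beta_0}$ of the chain) that the paper asserts without proof.
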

\begin{proof}
Since $J$ is an ideal of $B$, being a union of a chain of ideals, we only need to show that $J$ is good. Let $\gamma$ be the smallest ordinal number such that $I_\gamma$ is not good. Of course, $\gamma>0$. Moreover, by Theorem~\ref{extension}, $\gamma$ cannot be a successor, so $\gamma$ is the limit. We have reached a contradiction since the union of any chain of good ideals is good.
\end{proof}

\medskip

The above corollary shows that skew braces with property \PS contain many natural ideals which are good. 

Let $B$ be a skew brace. We recursively define the {\it upper socle series} of~$B$ as follows. Put $\operatorname{Soc}_0(B)=\{0\}$ and $\operatorname{Soc}_1(B)=\operatorname{Soc}(B)=\operatorname{Ker}(\lambda)\cap Z(B,+)$. If~$\alpha$ is an ordinal number, put $$\operatorname{Soc}_{\alpha+1}(B)/\operatorname{Soc}_\alpha(B)=\operatorname{Soc}\big(B/\Soc_{\alpha}(B)\big).$$ If $\mu$ is a limit ordinal, put $$\operatorname{Soc}_\mu(B)=\bigcup_{\gamma<\mu}\operatorname{Soc}_\beta(B).$$ The smallest ordinal number $\alpha$ such that $\operatorname{Soc}_\alpha(B)=\operatorname{Soc}_{\alpha+1}(B)$ is the {\it upper socle length} of $B$ and is denoted by $s(B)$; the last term of the upper socle series is the {\it hyper-socle} of $B$ and is denoted by $\overline{\operatorname{Soc}}(B)$. Note that every term of the upper socle series is an ideal of $B$ and it is easy to see that $\overline{\operatorname{Soc}}\big(B/\overline{\operatorname{Soc}}(B)\big)$ is zero. This series of ideals is relevant in some nilpotency theory of skew braces: it turns out, for instance, that a skew brace $B$ of nilpotent type is right nilpotent if and only if $B=\operatorname{Soc}_n(B)$ for some positive integer $n$ (see \cite[Lemmas 2.15 and 2.16]{Cedo}).

\begin{thm}\label{goodsocle}
Let $B$ be a skew brace with property \PS. Then $\overline{\operatorname{Soc}}(B)$ is a good ideal of $B$.
\end{thm}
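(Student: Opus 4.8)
The plan is to apply Corollary~\ref{corextension} to the upper socle series $\{0\}=\operatorname{Soc}_0(B)\subseteq\operatorname{Soc}_1(B)\subseteq\cdots$, whose union is $\overline{\operatorname{Soc}}(B)$. Since each successor factor is by definition $\operatorname{Soc}_{\alpha+1}(B)/\operatorname{Soc}_\alpha(B)=\operatorname{Soc}\big(B/\operatorname{Soc}_\alpha(B)\big)$, and every quotient $B/\operatorname{Soc}_\alpha(B)$ again has property \PS (the class being closed under homomorphic images), the whole theorem reduces to a single claim: \emph{the socle of any skew brace $B$ with property \PS is a good ideal of $B$.} Granting this, Corollary~\ref{corextension} immediately yields that $\overline{\operatorname{Soc}}(B)$ is good.

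To prove the claim I would first record the structure of $\operatorname{Soc}(B)=\operatorname{Ker}(\lambda)\cap Z(B,+)$. If $v\in\operatorname{Soc}(B)$ then $\lambda_v=\id$, so $v\ast b=\lambda_v(b)-b=0$ for every $b$; thus $\operatorname{Soc}(B)\ast B=\{0\}$. Moreover $a\circ b=a+\lambda_a(b)=a+b$ for $a,b\in\operatorname{Soc}(B)$, so on $\operatorname{Soc}(B)$ the two operations coincide and, being additively central, $\operatorname{Soc}(B)$ is a trivial brace. Two consequences will be used repeatedly: inside $\operatorname{Soc}(B)$ a sub-skew brace is the same thing as an additive subgroup, and $\ast$ is additive in its (central) second variable, i.e. $c\ast(u+w)=c\ast u+c\ast w$ for $u,w\in\operatorname{Soc}(B)$.

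Now fix a finitely generated ideal $J\subseteq\operatorname{Soc}(B)$, generated as an ideal by $a_1,\dots,a_k\in\operatorname{Soc}(B)$. Because $B$ has property \PS, for each $i$ the set $\{g\ast a_i:g\in B\}$ is finite; let $V$ be the additive subgroup of $\operatorname{Soc}(B)$ generated by the $a_i$ together with all the $g\ast a_i$, so that $V$ is a finitely generated abelian group. I would then check that $V$ is in fact an ideal of $B$: it is additively normal since it is central; it satisfies $V\ast B=\{0\}$ since $V\subseteq\operatorname{Ker}(\lambda)$; closure under $c\ast(-)$ follows from additivity in the second variable together with the identity $(c\circ g)\ast a_i=c\ast(g\ast a_i)+g\ast a_i+c\ast a_i$, which rewrites $c\ast(g\ast a_i)$ as a combination of generators of $V$; and multiplicative normality follows from the computation, valid for $v\in\operatorname{Soc}(B)$, that $\lambda_{g+w}=\lambda_g$ whenever $w\in\operatorname{Soc}(B)$ and $\lambda_g(g^{-1})=-g$, which together give $g\circ v\circ g^{-1}=\lambda_g(v)=v+g\ast v$; since $\lambda_g$ is additive and sends each generator of $V$ into $V$, it preserves $V$. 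As $V$ is an ideal containing $a_1,\dots,a_k$, we get $J\subseteq V$; hence $J$ is a subgroup of a finitely generated abelian group, so it is finitely generated additively, and therefore finitely generated as a skew brace by the first remark. This proves the claim and the theorem.

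The main obstacle is establishing that $V$ is genuinely an ideal, specifically the multiplicative normality, where one must show $g\circ v\circ g^{-1}$ stays in $V$. The cleanest route is the identity $g\circ v\circ g^{-1}=\lambda_g(v)$ for $v\in\operatorname{Soc}(B)$; once this is in hand, the remaining verifications are routine applications of the two $\ast$-identities and the centrality of $\operatorname{Soc}(B)$. It is worth noting that property \PS enters only to guarantee finiteness of the conjugacy sets $\{g\ast a_i:g\in B\}$, which is exactly what forces the controlling ideal $V$ to be finitely generated.
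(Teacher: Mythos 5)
Your proof is correct and takes essentially the same route as the paper: both reduce via Corollary~\ref{corextension} to showing that $\operatorname{Soc}(B)$ of a skew brace with property \PS is good, and both then show that the additive subgroup generated by the finitely many $\ast$-conjugates $g\ast a_i$ of the generators (finitely many precisely by property \PS) is an ideal, finitely generated additively, containing the given finitely generated ideal. The only difference is packaging: the paper realises this subgroup as the normal closure of a finite set in the $FC$-group $(B,+)\rtimes_\lambda(B,\circ)$ and quotes \cite[Lemma 1.10]{Cedo} for the identity $g\circ v\circ g^{-1}=\lambda_g(v)$ on the socle, whereas you verify these same facts directly from the brace identities.
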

\begin{proof}
By definitions and Corollary \ref{corextension}, we just need to show that $S=\operatorname{Soc}(B)$ is a good ideal of $B$. 

Let $G=N\rtimes_\lambda X$, where $N=(B,+)$ and $X=(B,\circ)$. Let $E$ be any finite subset of $S$. Since $G$ is an $FC$-group, the normal closure $F$ of $E$ (here we see $E$ as a subgroup of $N$) in $G$ is finitely generated. As a subset of $B$, $F$ is a finitely generated normal subgroup of $(B,+)$, is a subgroup of $(B,\circ)$, and is invariant under the action of $\lambda$. But $F$ is still contained in $\operatorname{Soc}(B)$, so, if $x\in F$ and $b\in B$, then \cite[Lemma 1.10]{Cedo} yields that $b\circ x\circ b^{-1}=\lambda_b(x)\in F$. It follows that $F$ is a normal subgroup of $(B,\circ)$ and hence $F$ is an ideal of $B$ containing $E$. Since $F$ is additively finitely generated, it is also finitely generated as a skew brace and the statement is proved.
\end{proof}

\medskip

In a similar way we define the {\it upper annihilator series} $\{\operatorname{Ann}_\alpha(B)\}_{\alpha\in\operatorname{Ord}}$ of $B$. The smallest ordinal number $a(B)$ such that $\operatorname{Ann}_{a(B)}(B)=\operatorname{Ann}_{a(B)+1}(B)$ is the {\it upper annihilator length}, the last term of the upper annihilator series is the {\it hyper-annihilator} and is denoted by $\overline{\operatorname{Ann}}(B)$. Again, every term of the upper annihilator series is an ideal and $\overline{\operatorname{Ann}}\big(B/\overline{\operatorname{Ann}}(B)\big)$ is zero. Moreover, for each ordinal number $\alpha$, $\operatorname{Ann}_\alpha(B)\subseteq\operatorname{Soc}_\alpha(B)$, so that $\overline{\operatorname{Ann}}(B)\subseteq\overline{\operatorname{Soc}}(B)$, and hence the hyper-annihilator is a good ideal by Theorem \ref{goodsocle}. Note that the latter inclusion can be strict even in the case of a periodic skew brace. For instance, let $B$ be the brace obtained as the semidirect product $C \rtimes A$, where $A, C$ are the unique braces of orders $3$ and $2$, respectively, and the action is the unique non-trivial action of $C$ on $A$ (see \cite{Okninski}). One can check that $\Soc_2(B)=B$, but the annihilator of $B$ is zero.

A skew brace $B$ such that $B=\operatorname{Ann}_n(B)$ for some non-negative integer $n$ is called {\it annihilator nilpotent} (of length $n$). In fact, the upper annihilator series can be thought as an analogue of the upper central series of a group using the notion of nilpotency introduced in \cite{Bonatto} (called ‘‘annihilator nilpotent’’ in \cite{JeVAV22x}). It is well known that in any $FC$-group the upper central length is bounded by $\omega$ and it turns out that a similar statement holds for skew braces having property \PS.

\begin{thm}\label{upperomega}
Let $B$ be a skew brace with property \PS. Then $B$ admits a series of ideals $$\{0\}\subseteq S_0(B)\subseteq S_1(B)\subseteq\ldots\subseteq S_n(B)\subseteq S_{n+1}(B)\subseteq \ldots S_\omega(B)=\overline{\operatorname{Ann}}(B)$$ such that:
\begin{itemize}
    \item $S_0(B)$ is torsion-free and $S_0(B)\subseteq\operatorname{Ann}(B)$; 
    \item for each $i\in\mathbb{N}\cup\{0\}$, $S_{i+1}(B)/S_i(B)\subseteq\operatorname{Ann}\big(B/S_i(B)\big)$ and $S_{i+1}(B)/S_i(B)$ is a restricted direct product of finite ideals of $B/S_i(B)$.
\end{itemize}

\noindent In particular, $a(B)\leq\omega$.
\end{thm}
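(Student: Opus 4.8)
The plan is to build the series $\{S_i(B)\}$ one step at a time, using at each stage the known structure theory of $FC$-groups together with the results already established for skew braces with property \PS. The governing analogy is that $\overline{\operatorname{Ann}}(B)$ should behave like the hypercentre of an $FC$-group, and for $FC$-groups it is classical that the upper central series reaches its terminal value in at most $\omega$ steps. So the first move is to reduce to the hyper-annihilator: by \cref{thgzglf} we know $B/\Ann(B)$ is locally finite, and by \cref{theoperiodicfc} the torsion theory is well-behaved, so the obstruction to nilpotency is concentrated in the interplay between the torsion-free part (which sits inside $\Ann(B)$ by \cref{corschurlf}) and the periodic part.

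First I would set $S_0(B)$ to be a maximal free abelian subgroup of $\Ann(B)$, exactly as in the proof of \cref{periodicreduction}; by that argument $S_0(B)$ is a torsion-free ideal contained in $\Ann(B)$, giving the first bullet. The quotient $B/S_0(B)$ then has periodic annihilator-related layers, and the task becomes showing that each successive annihilator factor $\operatorname{Ann}(B/S_i(B))$ decomposes as a restricted direct product of finite ideals. Here I would pass to $G = N \rtimes_\lambda X$ with $N=(B,+)$, $X=(B,\circ)$, which is an $FC$-group, and use \cref{theoperiodicfc} to see that the relevant factor is periodic. For a periodic $FC$-group, Dietzmann's lemma (recalled in the preliminaries) guarantees that every finite subset lies in a finite normal subgroup; the standard structure theory then expresses a periodic $FC$-centre as a restricted direct product of finite normal subgroups. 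The key point is to transport this group-theoretic decomposition back to the skew brace setting, checking via Equation~\eqref{equationfinale} and \cref{remref} that the finite normal subgroups produced are simultaneously normal in $(B,+)$, normal in $(B,\circ)$, and $\lambda$-invariant, hence genuine finite ideals of $B/S_i(B)$.

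Set $S_{i+1}(B)/S_i(B) = \operatorname{Ann}(B/S_i(B))$ for $i\geq 0$. The containment $S_{i+1}(B)/S_i(B)\subseteq \operatorname{Ann}(B/S_i(B))$ is then automatic, and the restricted-direct-product decomposition is exactly the content just described. Taking the union at $\omega$ gives $S_\omega(B)=\bigcup_n S_n(B)$; I would identify this with $\overline{\operatorname{Ann}}(B)$ by showing that the annihilator of $B/S_\omega(B)$ is trivial. This last identification is where the bound $a(B)\leq\omega$ comes from: if some element survived into $\operatorname{Ann}(B/S_\omega(B))$, one pulls it back through the $FC$-structure of $G$ to a single element of finite order (using periodicity of $B/S_0(B)$) whose normal closure is finite by Dietzmann, forcing it already into a finite step $S_n(B)$, a contradiction.

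The main obstacle I anticipate is not the group-theoretic bound itself — that $\omega$ suffices for $FC$-groups is well known — but rather the transport step: verifying that the finite normal subgroups of the periodic $FC$-group decomposition are $\lambda$-invariant and hence give ideals of the skew brace, and that the annihilator (a four-fold intersection of $\Fix^r$, $\Fix^l$, and the two centralisers) of a quotient really does inherit the restricted-direct-product structure rather than merely its additive or multiplicative group doing so. Reconciling the ideal structure across both operations simultaneously, using \cref{remref} and \cref{annsubgroup} to control the $\circ$-side while the $FC$-decomposition naturally lives on the $+$-side, is the delicate part that the routine calculations will have to confirm.
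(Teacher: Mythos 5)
Your reduction step (taking $S_0(B)$ to be a maximal free abelian subgroup of $\Ann(B)$, as in \cref{periodicreduction}) matches what the paper does implicitly, but the core of your construction rests on a false claim: that a periodic $FC$-group --- or the annihilator of a periodic skew brace with property \PS --- is a restricted direct product of finite normal subgroups. Dietzmann's lemma only yields that a periodic $FC$-group is the \emph{directed union} of its finite normal subgroups; it does not give any direct-product decomposition. The Pr\"ufer group $\mathbb{Z}(p^{\infty})$ is a periodic abelian group (hence an $FC$-group) that is not a restricted direct product of finite subgroups. Correspondingly, if $B$ is the trivial brace on $\mathbb{Z}(p^{\infty})$, then $B$ is a periodic skew brace with property \PS and $\Ann(B)=B$, so your definition $S_{1}(B)/S_0(B)=\Ann\big(B/S_0(B)\big)$ produces a layer that is \emph{not} a restricted direct product of finite ideals, and the second bullet of the theorem fails. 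This is not a repairable justification gap: with the layers defined as full annihilators, the statement you need about them is simply false.

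The paper's proof avoids this by taking smaller layers: $S_{i+1}(B)/S_i(B)$ is defined as the ideal generated by all \emph{minimal} ideals of $B/S_i(B)$ contained in the hyper-annihilator. After the reduction to the locally finite case (\cref{thgzglf}), each such minimal ideal is finite because the hyper-annihilator is a good ideal (\cref{goodsocle}), and Zorn's lemma makes the layer a direct product of some of these finite minimal ideals --- this is where the restricted-direct-product structure actually comes from. Two things must then be proved which your plan gets ``for free'' only because of the faulty definition: (i) each layer lies in the annihilator of the corresponding quotient, which the paper shows by taking a minimal ideal $I\subseteq\overline{\Ann}(B)$, choosing the least ordinal $\alpha$ with $I\cap\Ann_\alpha(B)\neq\{0\}$, and checking that all elements $x\ast b$, $b\ast x$, $[x,b]_+$, $[x,b]_\circ$ with $x\in I$, $b\in B$ land in $I\cap\Ann_{\alpha-1}(B)=\{0\}$; and (ii) $S_\omega(B)=\overline{\Ann}(B)$, which follows since, by goodness, every finite subset of $\overline{\Ann}(B)$ lies in a finite ideal, and every finite ideal contained in $\overline{\Ann}(B)$ lies in some $S_n(B)$ by induction on its order (your alternative argument via Dietzmann also stumbles on the transport problem you yourself flag: a finite normal subgroup of $G=N\rtimes_\lambda X$ need not be an ideal of $B$, and it is precisely \cref{goodsocle} that bridges this). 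In the Pr\"ufer example the paper's machinery gives $S_n(B)\simeq\mathbb{Z}/p^n\mathbb{Z}$ and genuinely needs all $\omega$ steps, which illustrates concretely why the layers cannot be taken to be whole annihilators.
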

\begin{proof}
By Theorem \ref{thgzglf}, we may assume that $B$ is locally finite. 
Theorem \ref{goodsocle} shows that every minimal ideal of $B$ contained in $\overline{\operatorname{Ann}}(B)$ is finite. Now we define a series of ideals of $B$ as follows. Let $S_0=\{0\}$. An easy application of Zorn's lemma yields that the ideal $S_1(B)=S(B)$ generated by all minimal ideals of $B$  which are contained in $\overline{\operatorname{Ann}}(B)$ is the direct product of some of them. If $\alpha$ is any ordinal number, we put $$S_{\alpha+1}(B)/S_\alpha(B)=S\big(B/S_\alpha(B)\big),$$ while if $\mu$ is any limit ordinal, we put $$S_\mu(B)=\bigcup_{\alpha<\mu} S_\alpha(B).$$ It is clear that each $S_\alpha(B)$ is an ideal of $B$ contained in $\overline{\operatorname{Ann}}(B)$.  Let $I$ be any finite ideal of $B$ contained in $\overline{\operatorname{Ann}}(B)$. We prove by induction on the cardinality of $I$ that $I$ is contained in $S_\omega(B)$. Clearly, $I$ contains a minimal ideal $J$ of $B$, so~$J$ is contained in $S_1(B)$ and $\big(I+S_1(B)\big)/S_1(B)$ has smaller order, so induction yields that it is contained in  $S_\ell\big(B/S_1(B)\big)$ for some $\ell$. Consequently, $I$ is contained in~$S_{\ell+1}(B)$ and we are done.

By Theorem \ref{goodsocle}, every finite subset of $\overline{\operatorname{Ann}}(B)$ is contained in some finite ideal, which means that it is also contained in $S_\omega(B)$ by the above claim. It follows that $S_\omega(B)=\overline{\operatorname{Ann}}(B)$.

Finally, we claim that $S_m(B)$ is contained in $\operatorname{Ann}_m(B)$ for every $m$: this will show that $\operatorname{Ann}_\omega(B)=S_\omega(B)=\overline{\operatorname{Ann}}(B)$ and complete the proof of the statement. In order to prove the claim, it is enough to show that $S_1(B)$ is contained in $\operatorname{Ann}(B)$ so that every minimal ideal $I$ of $B$ which is contained in $\overline{\operatorname{Ann}}(B)$ is also contained in $\operatorname{Ann}(B)$. Let $\alpha$ be the smallest ordinal number such that $I\cap\operatorname{Ann}_\alpha(B)$ is non-trivial (such an ordinal exists because $I$ is contained in $\overline{\operatorname{Ann}}(B)$); in particular, \mbox{$I\subseteq\operatorname{Ann}_\alpha(B)$.} Of course, $\alpha$ is not limit and not $0$, so $I\cap \operatorname{Ann}_{\alpha-1}(B)=\{0\}$. Let $x\in I$. For each $b\in B$, we have that $x\ast b,b\ast x,[x,b]_+,[x,b]_\circ$ are elements of $I$ and are trivial modulo $\operatorname{Ann}_{\alpha-1}(B)$. Therefore $$x\ast b,b\ast x,[x,b]_+,[x,b]_\circ\in I\cap \operatorname{Ann}_{\alpha-1}(B)=\{0\}$$ and hence $I\subseteq\operatorname{Ann}(B)$. This completes the proof.
\end{proof}

\medskip

\begin{rem}
If we replace the operator ‘‘$\operatorname{Ann}$’’ with the operator ‘‘$\operatorname{Soc}$’’, an analogue of Theorem~\ref{upperomega} holds with the same proof.
\end{rem}

\medskip

Now, using the concept of good ideals of a skew brace with property \PS, we are able to locally characterise those ideals which are contained in the hyper-annihilator.

\begin{thm}\label{LNth}
Let $B$ be a skew brace with property \PS and let $I$ be an ideal of $B$. Then the following conditions are equivalent:
\begin{itemize}
    \item[\textnormal{(1)}] $I$ is good, and if $A$ is a finitely generated sub-skew brace of $B$, then $A\cap I$ is contained in $\overline{\operatorname{Ann}}(A)$.
    \item[\textnormal{(2)}] $I\subseteq\overline{\operatorname{Ann}}(B)$.
\end{itemize}
\end{thm}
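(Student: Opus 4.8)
The plan is to prove the two implications separately, with $(2)\Rightarrow(1)$ being the easier direction. Throughout I use the elementary characterization $c\in\operatorname{Ann}(C)$ iff $c\ast d=d\ast c=[c,d]_+=[c,d]_\circ=0$ for all $d\in C$ (which follows from $\operatorname{Ann}(C)=\operatorname{Ker}(\lambda)\cap Z(C,+)\cap Z(C,\circ)$ and the remark that elements of the annihilator satisfy $d\ast c=0$ as well), and I recall that property \PS is inherited by sub-skew braces, so \cref{theoremfg} and \cref{upperomega} apply to every sub-skew brace of $B$.

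For $(2)\Rightarrow(1)$, I would first record the general inclusion $A\cap\operatorname{Ann}_k(B)\subseteq\operatorname{Ann}_k(A)$, valid for \emph{any} sub-skew brace $A$ and every $k$. This is an induction on $k$: for $x\in A\cap\operatorname{Ann}_k(B)$ and $a\in A$, each of $x\ast a$, $a\ast x$, $[x,a]_+$, $[x,a]_\circ$ lies in $A$ (as $A$ is a sub-skew brace) and in $\operatorname{Ann}_{k-1}(B)$ (by definition of the upper annihilator series), hence in $A\cap\operatorname{Ann}_{k-1}(B)\subseteq\operatorname{Ann}_{k-1}(A)$ by the inductive hypothesis; this says precisely that $x+\operatorname{Ann}_{k-1}(A)\in\operatorname{Ann}\big(A/\operatorname{Ann}_{k-1}(A)\big)$, i.e. $x\in\operatorname{Ann}_k(A)$. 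Granting this, assume $I\subseteq\overline{\operatorname{Ann}}(B)$. Since $\overline{\operatorname{Ann}}(B)\subseteq\overline{\operatorname{Soc}}(B)$ is a good ideal by \cref{goodsocle}, every finitely generated ideal contained in $I$ is finitely generated as a skew brace, so $I$ is good. Finally, if $A$ is finitely generated and $x\in A\cap I$, then $x\in\operatorname{Ann}_k(B)$ for some $k$ (because $a(B)\leq\omega$ by \cref{upperomega}, so $\overline{\operatorname{Ann}}(B)=\bigcup_k\operatorname{Ann}_k(B)$), whence $x\in A\cap\operatorname{Ann}_k(B)\subseteq\operatorname{Ann}_k(A)\subseteq\overline{\operatorname{Ann}}(A)$; thus $A\cap I\subseteq\overline{\operatorname{Ann}}(A)$.

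For $(1)\Rightarrow(2)$, fix $x\in I$ and let $J$ be the ideal of $B$ generated by $x$. Since $J\subseteq I$ is generated by one element as an ideal and $I$ is good, $J$ is finitely generated as a skew brace, so $(J,+)$ and $(J,\circ)$ are finitely generated groups by \cref{theoremfg}. Consider the descending chain of ideals $J=J_1\supseteq J_2\supseteq\cdots$, where $J_{i+1}=[J_i,B]$ is the ideal generated by all $w\ast b$, $b\ast w$, $[w,b]_+$, $[w,b]_\circ$ with $w\in J_i$, $b\in B$ (each $J_{i+1}\subseteq J_i$ since $J_i$ is an ideal). The crucial point is that $J_2$ is \emph{finite}: working in the $FC$-group $G=N\rtimes_\lambda X$ and using \eqref{equationfinale}, the commutators of a fixed generator of $J$ with elements of $B$ take only finitely many periodic values by property \PS, so Dietzmann's lemma makes their normal closure in $G$ finite; the relations satisfied by $\ast$ together with the commutator identities then express every generator of $J_2$ inside this finite normal closure, exactly as in the proof of \cref{theoremfg}. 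Consequently all $J_i\subseteq J_2$ are finite and the chain stabilises: there is $m$ with $R:=J_m=J_{m+1}=[R,B]$. To see $R=\{0\}$, note that, $R$ being finite and $B$ having property \PS, for each $r\in R$ the commutators $r\ast b$, $b\ast r$, $[r,b]_+$, $[r,b]_\circ$ take finitely many values as $b$ ranges over $B$; choosing finitely many witnesses yields a finite set $F\subseteq B$ with $[R,\langle F\rangle]=[R,B]=R$. Put $A=\langle R\cup F\rangle$, a finitely generated sub-skew brace with $R\subseteq A\cap I$. By hypothesis $R\subseteq\overline{\operatorname{Ann}}(A)$, and since $R$ is finite and $a(A)\leq\omega$, in fact $R\subseteq\operatorname{Ann}_k(A)$ for some $k$. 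But $[R,A]\subseteq[R,B]=R$ and $R=[R,\langle F\rangle]\subseteq[R,A]$, so $R=[R,A]$ and hence $R=[R,\underbrace{A,\ldots,A}_{k}]$; as $[\operatorname{Ann}_i(A),A]\subseteq\operatorname{Ann}_{i-1}(A)$ for every $i$, the inclusion $R\subseteq\operatorname{Ann}_k(A)$ forces this $k$-fold commutator into $\operatorname{Ann}_0(A)=\{0\}$, so $R=\{0\}$. Finally, from $J_m=\{0\}$ a downward induction gives $J_{m-j}\subseteq\operatorname{Ann}_j(B)$: if $J_{m-j}\subseteq\operatorname{Ann}_j(B)$ then for $w\in J_{m-j-1}$ all commutators $[w,b]$ lie in $J_{m-j}\subseteq\operatorname{Ann}_j(B)$, so $w\in\operatorname{Ann}_{j+1}(B)$. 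Taking $j=m-1$ yields $x\in J=J_1\subseteq\operatorname{Ann}_{m-1}(B)\subseteq\overline{\operatorname{Ann}}(B)$.

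I expect the main obstacle to be the $(1)\Rightarrow(2)$ direction, and specifically the extraction of a \emph{uniform} bound: the local condition only provides, for each finitely generated $A$, some finite $k=k(A)$ with $A\cap I\subseteq\operatorname{Ann}_{k}(A)$, and there is no reason for these $k(A)$ to be bounded as $A$ varies. The device that overcomes this is to first replace $J$ by the finite stable term $R=[R,B]$ of its $B$-lower annihilator series — whose finiteness rests precisely on goodness (to make $J$ finitely generated as a skew brace) together with Dietzmann's lemma — and only then to invoke the local condition for a \emph{single} finitely generated $A\supseteq R\cup F$, where the identity $R=[R,A,\ldots,A]$ collapses $R$ to $\{0\}$ irrespective of the value of $k$.
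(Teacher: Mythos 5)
Your proof of (2)$\Rightarrow$(1) is correct and is essentially the paper's own argument (the inclusion $A\cap\operatorname{Ann}_\alpha(B)\subseteq\operatorname{Ann}_\alpha(A)$ plus \cref{goodsocle}). The direction (1)$\Rightarrow$(2), which you organise quite differently from the paper, contains two genuine gaps, both stemming from the same conflation: you treat the \emph{ideal of $B$ generated by} a set of commutators as if it were trapped inside any convenient finite set containing those commutators. First, the finiteness of $J_2$. Your Dietzmann argument does prove that the set of all $w\ast b$, $b\ast w$, $[w,b]_+$, $[w,b]_\circ$ (with $w\in J$, $b\in B$) lies in a finite subgroup $H$ of $G=N\rtimes_\lambda X$ which is normal in $G$ and contained in $N$; but inside $B$ such an $H$ is only a $\lambda$-invariant normal subgroup of $(B,+)$. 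Normality in $(B,\circ)$, equivalently $H\ast B\subseteq H$, is exactly what conjugation in $G$ does \emph{not} give: by \eqref{equationfinale}, $h\ast b$ is the commutator $[(0,h),(b,0)]$, whereas $H$ contains the elements $(h,0)$, not $(0,h)$. So $H$ need not be an ideal, and $J_2$ --- the ideal you generate from those commutators --- is not shown to lie in $H$. The inference ``an ideal with finitely many periodic generators is finite'' is precisely the delicate point of \cref{question1}, which the paper leaves open. This gap is repairable, but by a different argument: $J_2$ is a finitely generated ideal of $B$ contained in $I$, hence finitely generated as a skew brace by goodness; its generators are periodic and $T_+(B)$ is an ideal by \cref{theoperiodicfc}, so $J_2$ is periodic, and a finitely generated periodic skew brace is finite.

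The second gap is in the collapse $R=[R,A,\ldots,A]\subseteq\operatorname{Ann}_0(A)=\{0\}$. The identity $R=[R,\langle F\rangle]=[R,A]$ holds when $[\,\cdot\,,\cdot\,]$ denotes the ideal of \emph{$B$} generated by the relevant commutators, because that is how $R=[R,B]$ was built; but the inclusion $[\operatorname{Ann}_i(A),A]\subseteq\operatorname{Ann}_{i-1}(A)$ holds only for the bare commutator sets, or for the ideal of \emph{$A$} they generate, since $\operatorname{Ann}_{i-1}(A)$ is an ideal of $A$ and not of $B$. From $R\subseteq\operatorname{Ann}_k(A)$ you may conclude that the commutators of $R$ with $A$ lie in $\operatorname{Ann}_{k-1}(A)$, but $R$ is the ideal of $B$ generated by those commutators, and nothing confines that ideal to $\operatorname{Ann}_{k-1}(A)$; the induction on $k$ therefore does not close. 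To make your scheme work one must enlarge $A$: besides the witnesses $F$, adjoin the finitely many elements of $B$ occurring as parameters (additive conjugators, $\lambda$-twists, circle conjugators) in derivations of the finitely many elements of $R$ from the commutators of $R$ with $F$, so that $R$ coincides with the ideal of \emph{$A$} generated by the commutators of $R$ with $A$; then the collapse is valid. This parameter-adjunction is, in essence, the device the paper uses at the corresponding point of its own, structurally different, proof: there one reduces to finite $I$ via \cref{theoperiodicfc} and \cref{idealtorsionfreeskewlefttrivial}, picks a minimal ideal $F$ of $B$ inside $I$, realises it as a minimal ideal of a finitely generated sub-skew brace $E$, pushes $F$ into $\operatorname{Ann}(B)$ by the argument of \cref{upperomega}, and finishes by induction on $|I|$.
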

\begin{proof}
First, we prove (2) implies (1). Let $A$ be any finitely generated sub-skew brace of $B$. It follows from Theorem \ref{goodsocle} that $I$ is good. Moreover, for each ordinal $\alpha$, we have that $$\operatorname{Ann}_\alpha(B)\cap A\subseteq \operatorname{Ann}_\alpha(A),$$ so $$I\cap A\subseteq\overline{\operatorname{Ann}}(B)\cap A\subseteq\overline{\operatorname{Ann}}(A)$$ and we are done.

\medskip

Now, we prove (1) implies (2). Let $J$ be a finitely generated ideal of $B$ contained in $I$, so by the hypothesis, $J$ is finitely generated as a skew brace. Let $T=T_+(J)$. Then $T$ is a finite ideal of $B$ (see Theorem \ref{theoperiodicfc}). Moreover, $J/T$ is contained in $\Ann(B/T)$ by Corollary \ref{idealtorsionfreeskewlefttrivial}, so it only remains to show that $T$ is contained in $\overline{\operatorname{Ann}}(B)$. To this end we may assume~$I$ is finite and non-zero. Let $F$ be a minimal ideal of $B$ contained in~$I$, so~$F$ is finite. Then, it is certainly possible to find a finitely generated sub-skew brace $E$ of $B$ such that~$F$ is a minimal ideal of $E$. But $F\subseteq E\cap I$ is contained in the hyper-annihilator of $E$, so an argument we employed at the end of the proof of Theorem \ref{upperomega} yields that $F\subseteq\operatorname{Ann}(E)$. Since every sub-skew brace containing $E$ has the same property as $E$, it follows that $F\subseteq\operatorname{Ann}(B)$. Let $K=I\cap \operatorname{Ann}(B)$; in particular, $K$ is a finite ideal of $B$. Now, $I/K$ satisfies the hypotheses of (1) in~$B/K$, so by induction, we get that $I/K$ lies in the hyper-annihilator of~$B/K$. Then~$I$ lies in the hyper-annihilator of $B$ and the statement is proved.
\end{proof}

\medskip

A skew brace is {\it locally annihilator-nilpotent} if every finitely generated sub-skew brace is annihilator nilpotent. As a consequence of the above result, we have an interesting characterisation of locally annihilator-nilpotent skew braces in the universe of good skew braces satisfying property \PS. This corresponds to the fact that an $FC$-group is locally nilpotent if and only if it is hypercentral.

\begin{cor}\label{LN}
Let $B$ be a good skew brace having property \PS. Then the following conditions are equivalent:
\begin{itemize}
    \item[\textnormal{(1)}] $B$ is locally annihilator-nilpotent.
    \item[\textnormal{(2)}] $B=\overline{\operatorname{Ann}}(B)$.
\end{itemize}
\end{cor}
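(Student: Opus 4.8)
The plan is to deduce the corollary directly from Theorem~\ref{LNth} applied to the ideal $I=B$, the only genuine work being to bridge the gap between the \emph{a priori} transfinite notion encoded in the hyper-annihilator and the finite-length notion of annihilator nilpotency; this bridge will be supplied by the ascending chain condition on sub-skew braces available for finitely generated \PS-braces.

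First I would record the translation provided by Theorem~\ref{LNth}. Since $B$ is good and $\overline{\operatorname{Ann}}(B)\subseteq B$, that theorem with $I=B$ says that $B=\overline{\operatorname{Ann}}(B)$ holds if and only if every finitely generated sub-skew brace $A$ of $B$ satisfies $A=A\cap B\subseteq\overline{\operatorname{Ann}}(A)$, i.e. $A=\overline{\operatorname{Ann}}(A)$ (note that each such $A$ inherits property \PS, since the class is closed under passing to sub-skew braces). Thus the whole statement reduces to showing, for a finitely generated sub-skew brace $A$ with property \PS, that $A=\overline{\operatorname{Ann}}(A)$ is equivalent to $A$ being annihilator nilpotent of \emph{finite} length.

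To establish this local equivalence I would invoke the fact, recorded in the proof of Theorem~\ref{extension}, that a finitely generated skew brace with property \PS satisfies the ascending chain condition on sub-skew braces. The upper annihilator series $\{\operatorname{Ann}_\alpha(A)\}$ is a strictly increasing chain of ideals—hence of sub-skew braces—up to its length $a(A)$, so the ascending chain condition forces $a(A)$ to be finite. Consequently $\overline{\operatorname{Ann}}(A)=\operatorname{Ann}_{a(A)}(A)$ with $a(A)<\omega$, and therefore $A=\overline{\operatorname{Ann}}(A)$ is literally the same as $A=\operatorname{Ann}_{a(A)}(A)$, i.e. $A$ is annihilator nilpotent; the converse implication is immediate, since annihilator nilpotency gives $A=\operatorname{Ann}_n(A)\subseteq\overline{\operatorname{Ann}}(A)$.

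Putting the pieces together yields both implications. If $B=\overline{\operatorname{Ann}}(B)$, then Theorem~\ref{LNth} gives $A=\overline{\operatorname{Ann}}(A)$ for every finitely generated $A$, and the chain-condition argument upgrades this to annihilator nilpotency, so $B$ is locally annihilator-nilpotent. Conversely, if $B$ is locally annihilator-nilpotent, then each finitely generated $A$ satisfies $A=\operatorname{Ann}_n(A)\subseteq\overline{\operatorname{Ann}}(A)$, so the hypothesis of part~(1) of Theorem~\ref{LNth} holds (goodness being assumed), and that theorem delivers $B\subseteq\overline{\operatorname{Ann}}(B)$, whence $B=\overline{\operatorname{Ann}}(B)$. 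The main obstacle is precisely the passage from transfinite to finite annihilator length, which is exactly where finite generation together with property \PS is indispensable; everything else is bookkeeping on top of Theorem~\ref{LNth}.
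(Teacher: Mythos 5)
Your proof is correct and takes essentially the same approach as the paper: the paper states Corollary~\ref{LN} as an immediate consequence of Theorem~\ref{LNth} (applied with $I=B$), which is exactly your reduction. The only step you spell out beyond that---using the ascending chain condition on sub-skew braces of a finitely generated \PS-brace, as recorded in the proof of Theorem~\ref{extension}, to pass from $A=\overline{\operatorname{Ann}}(A)$ to annihilator nilpotency of finite length---is the intended (and correctly executed) bridge between the transfinite and finite notions.
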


\begin{rem}
In Theorem~\ref{LNth}, the operator ‘‘$\operatorname{Ann}$’’ can be replaced by the operator ‘‘$\operatorname{Soc}$’’.
\end{rem}

\medskip

We now give an application of Theorem \ref{periodicreduction} and Theorem \ref{goodsocle}. As already noticed a theorem of Issai Schur states that if a group is finite over its centre, then also its commutator subgroup is finite; it is also well known that the converse to this statement holds whenever the group is finitely generated. Analogous results for skew braces have been proved in \cite{MR4256133} (see also our Lemma \ref{lemma1}). Schur's theorem is a very particular case of a more general result of Baer \cite{Baer}, which states that if a group is finite over its $n$-th centre, then the $(n+1)$-term of its lower central series is finite. The analogue of this result is true for skew braces with property~\PS, it is not clear if this is the case without such an assumption. 

\begin{thm}\label{BaerThm}
Let $B$ be a skew brace with property \PS and let $n$ be a positive integer. If $B/\operatorname{Ann}_n(B)$ is finite, then $B$ has a finite ideal $I$ such that $B/I$ is annihilator nilpotent of length $n$.
\end{thm}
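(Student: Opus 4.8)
The plan is to transfer the statement to the associated $FC$-group and invoke the group-theoretic theorem of Baer, and then to push the resulting finite object back down to an ideal of $B$ using the good-ideal theory developed above.

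\medskip

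Let $G=N\rtimes_\lambda X$, where $N=(B,+)$ and $X=(B,\circ)$; as observed in Section~\ref{fcgroups}, $G$ is an $FC$-group. The first step is to link the upper annihilator series of $B$ with the upper central series $\{Z_k(G)\}$ of $G$. Recall that if $a\in\Ann_k(B)$, then each of the conjugates $a\ast b$, $b\ast a$, $[a,b]_+$ and $[a,b]_\circ$ lies in $\Ann_{k-1}(B)$. Combining this with Equation~\eqref{equationfinale} and the descriptions $[(b,0),(c,0)]=([b,c]_+,0)$ and $[(0,b),(0,c)]=(0,[b,c]_\circ)$ of the commutators inside $N$ and $X$, an induction on $k$ (checking the generating elements $(b,0)$ and $(0,b)$ of $G$) shows that $a\in\Ann_k(B)$ forces both $(a,0)$ and $(0,a)$ to lie in $Z_k(G)$. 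Hence $(\Ann_n(B),0)$ and $(0,\Ann_n(B))$ have finite index in $N$ and $X$ respectively, and since $G=NX$ it follows that $G/Z_n(G)$ is finite. Baer's theorem \cite{Baer} then yields that $\gamma_{n+1}(G)$ is finite.

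\medskip

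The second, and harder, step is to produce from this a finite ideal $I$ with $B/I$ annihilator nilpotent of length $n$. The natural candidate is the $(n+1)$-st term $L_{n+1}(B)$ of the lower annihilator series, that is, the ideal obtained by repeatedly generating, from the previous term, the conjugates $a\ast x$, $x\ast a$, $[a,x]_+$ and $[a,x]_\circ$. Its defining generators are exactly the brace-theoretic shadows of the iterated commutators generating $\gamma_{n+1}(G)$, so they form a finite set of periodic elements; moreover, the lower annihilator series is compatible with quotients, so with $I=L_{n+1}(B)$ one has $L_{n+1}(B/I)=\{0\}$, and the duality between the lower and upper annihilator series (which gives $L_{n+1}(C)=\{0\}$ precisely when $\Ann_n(C)=C$) will then give that $B/I$ is annihilator nilpotent of length $n$ — provided $I$ is shown to be finite.

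\medskip

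The finiteness of $I$ is the main obstacle, and it is exactly here that property \PS is indispensable. One cannot argue abstractly, since a finitely generated ideal generated by periodic elements need not be finite in an arbitrary periodic skew brace; this is precisely the content of the still-open Question~\ref{question1}. My plan is to reduce to the periodic case: by Theorem~\ref{periodicreduction} and Corollary~\ref{torsionfreeskewlefttrivial} the torsion-free directions of $B$ form a trivial brace, already annihilator nilpotent of length one, so the whole obstruction is concentrated in the torsion ideal $T_+(B)$ (Theorem~\ref{theoperiodicfc}). There I would use Dietzmann's lemma inside the locally finite $FC$-group $G$ to trap the finitely many periodic generators of $I$ in a finite normal subgroup, and then invoke the good-ideal machinery — Theorem~\ref{goodsocle} together with the fact that a finitely generated periodic skew brace is finite (a consequence of Theorem~\ref{theoremfg}) — to conclude that the ideal they generate is genuinely finite. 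The delicate point throughout is that a normal subgroup of $G$ need not correspond to an ideal of $B$: an ideal must be a subgroup of both $(B,+)$ and $(B,\circ)$ simultaneously, whereas the additive and multiplicative copies sit transversally in $G$ and the $\ast$-products all land in the additive copy. Reconciling the finite normal subgroup produced by Dietzmann's lemma with this two-sided requirement is what I expect to be the technical heart of the proof.
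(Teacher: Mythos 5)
Your first step is sound: one can indeed check, by induction on $k$ and using the commutator identities $[(0,a),(b,0)]=(a\ast b,0)$, $[(a,0),(0,b)]=(-(b\ast a),0)$, $[(a,0),(b,0)]=([a,b]_+,0)$, $[(0,a),(0,b)]=(0,[a,b]_\circ)$, that $a\in\operatorname{Ann}_k(B)$ forces $(a,0),(0,a)\in Z_k(G)$, so $Z_n(G)$ has finite index and Baer's theorem makes $\gamma_{n+1}(G)$ finite. The gaps are in everything after that. First, your finiteness claim for the generators of $L_{n+1}(B)$ needs an induction showing that for \emph{every} element $c$ of the ideal $L_k(B)$ --- not merely for its defining generators --- both copies $(c,0)$ and $(0,c)$ lie in $\gamma_k(G)$. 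This is not formal: a generator of type $[c,z]_\circ$ enters $\gamma_{k+1}(G)$ only through its multiplicative copy $(0,[c,z]_\circ)$, yet at the next level it must be used through its additive copy (in $c\ast z$, $[c,z]_+$); and ideal generation adds sums, $\lambda$-images and conjugates of mixed-type generators, for which closure of the two-sided condition $\{c:(c,0),(0,c)\in\gamma_k(G)\}$ is exactly the ``two-copy'' obstruction you defer to ``the technical heart''. Since you never close it, the finiteness of the generating set of $L_{n+1}(B)$ is not established. Second, the duality $L_{n+1}(C)=\{0\}\iff\operatorname{Ann}_n(C)=C$ is asserted but not proved, and it is not in the paper (it is true --- the key point being that $c\ast b=b\ast c=[c,b]_+=0$ for all $b$ already forces $c\in\operatorname{Ann}(C)$, after which the group-theoretic induction goes through --- but you must supply it). Third, your finiteness plan cites \cref{goodsocle}, which only makes $\overline{\operatorname{Soc}}(B)$ good; to convert ``finitely generated ideal with periodic generators'' into ``finite'' you need $B$ itself to be good, which requires \cref{extension} combined with the observation that $B/\overline{\operatorname{Ann}}(B)$ is finite (this follows from the hypothesis $|B/\operatorname{Ann}_n(B)|<\infty$); this step never appears in your sketch.

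For comparison, the paper's proof bypasses $\gamma_{n+1}(G)$ and any lower series entirely: it first establishes that $B$ is good (\cref{extension} together with \cref{goodsocle}, since $\overline{\operatorname{Ann}}(B)$ is good and $B/\overline{\operatorname{Ann}}(B)$ is finite), then reduces to the case where $B$ is periodic via \cref{periodicreduction}, and finally takes $I$ to be the ideal generated by a finite transversal $F$ of $A=\operatorname{Ann}_n(B)$ in $(B,+)$. Goodness plus periodicity make $I$ finite, and $B=A+I$ gives $B/I=(A+I)/I\subseteq\operatorname{Ann}_n(B/I)$, so $B/I$ is annihilator nilpotent of length at most $n$. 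If you wish to rescue your route, the two-copy lemma and the lower/upper duality are the statements you must actually prove; the transversal trick is how the paper gets the conclusion without either.
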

\begin{proof}
Theorems \ref{extension} and \ref{goodsocle} show that $B$ is good, so every factor group of $B$ is good and Theorem \ref{periodicreduction} shows that we may assume $B$ periodic. Let $F$ be a complete set of representatives of $A=\operatorname{Ann}_n(B)$ in $(B,+)$. Then the ideal $I$ generated by $F$ is finite and $B=A+I$ and the statement is proved.
\end{proof}

\medskip

The theorems of Schur and Baer can be generalised in many other ways. For example, it has been proved in \cite{dg} that if $G$ is finite over its hypercentre, then $G$ has a finite normal subgroup whose factor group is hypercentral. The proof of the above result also makes it possible to prove a corresponding statement in the case of skew braces having property \PS.

\begin{thm}
Let $B$ be a skew brace with property \PS. If~$B$ is finite over the hyper-annihilator, then $B$ has a finite ideal $I$ such that $B/I$ coincides with its hyper-annihilator.
\end{thm}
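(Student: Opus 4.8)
The plan is to produce the finite ideal explicitly, as the torsion subgroup of the ideal generated by a finite transversal of the hyper-annihilator, and then to check directly that the quotient collapses into its own hyper-annihilator; no reduction to the periodic case is actually needed.

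First I would record that $B$ is itself a good skew brace. Since $\overline{\operatorname{Ann}}(B)\subseteq\overline{\operatorname{Soc}}(B)$ it is good by \cref{goodsocle}, the finite quotient $B/\overline{\operatorname{Ann}}(B)$ is trivially good, and so \cref{extension}, applied to the pair $\overline{\operatorname{Ann}}(B)\subseteq B$, shows that $B$ is good; in particular every finitely generated ideal of $B$ is finitely generated as a skew brace. Using that $B/\overline{\operatorname{Ann}}(B)$ is finite, I would then fix elements $g_1,\dots,g_m$ whose images form a transversal and set $H=\langle g_1,\dots,g_m\rangle$, so that $B=H+\overline{\operatorname{Ann}}(B)$ with $H$ finitely generated. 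Letting $I$ be the ideal of $B$ generated by $H$, goodness guarantees that $I$ is finitely generated as a skew brace.

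The crux of the construction is that the right finite ideal is $I'=T_+(I)$. Indeed, $I$ is a finitely generated skew brace with property \PS, so \cref{theoperiodicfc}(4) gives that $T_+(I)$ is finite; and since $T_+(I)=I\cap T_+(B)$ is an intersection of two ideals of $B$, it is a finite ideal of $B$. This is the step I expect to carry the most weight, since it is precisely here that the hypotheses conspire: finiteness over the hyper-annihilator squeezes the ``non-central part'' into a finitely generated ideal, whose periodic part is then genuinely finite. This is the skew-brace shadow of the Dietzmann-type finiteness that underlies the group-theoretic statement of \cite{dg}.

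It then remains to verify that $B/I'$ equals its own hyper-annihilator. Writing $\pi\colon B\to B/I'$ for the projection, the image $\pi(H)$ lies in $\pi(I)=I/I'$, which is a torsion-free ideal of the skew brace $B/I'$ (still a property \PS skew brace, as homomorphic images inherit \PS); hence $I/I'\subseteq\Ann(B/I')$ by \cref{idealtorsionfreeskewlefttrivial}. On the other hand $\pi\big(\overline{\operatorname{Ann}}(B)\big)\subseteq\overline{\operatorname{Ann}}(B/I')$, and combining these with $B=H+\overline{\operatorname{Ann}}(B)$ yields $B/I'=\pi(H)+\pi\big(\overline{\operatorname{Ann}}(B)\big)\subseteq\overline{\operatorname{Ann}}(B/I')$, forcing equality. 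The single auxiliary fact to be nailed down here — the exact analogue of $\phi\big(Z_\infty(G)\big)\subseteq Z_\infty\big(\phi(G)\big)$ — is that a surjective homomorphism of skew braces carries the hyper-annihilator into the hyper-annihilator; I would prove this by transfinite induction along the upper annihilator series, the base case being the elementary observation that $\Ann(B)$ plays the role of the centre of a group and is therefore mapped into $\Ann(B/I')$.
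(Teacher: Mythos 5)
Your proof is correct, and every step it invokes is available in the paper: sub-skew braces and homomorphic images inherit \PS; goodness of $B$ follows from \cref{extension} and \cref{goodsocle} exactly as in the proof of \cref{BaerThm}; the ideal $I$ generated by a finite transversal of $\overline{\operatorname{Ann}}(B)$ is then finitely generated as a skew brace, so $I'=T_+(I)$ is finite by \cref{theoperiodicfc}(4) and is an ideal of $B$ since $T_+(I)=I\cap T_+(B)$ and $T_+(B)$ is an ideal by \cref{theoperiodicfc}(2); finally $I/I'$ is torsion-free by \cref{theoperiodicfc}(3), hence lies in $\operatorname{Ann}(B/I')$ by \cref{idealtorsionfreeskewlefttrivial}. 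It is worth stressing, however, that your route is not the paper's: the paper gives no argument at all for this statement, only the remark that the group-theoretic proof of \cite{dg} can be adapted, and the neighbouring template (\cref{BaerThm}) first reduces to the periodic case via \cref{periodicreduction} before taking the ideal generated by a transversal. You bypass the periodic reduction entirely by extracting the additive torsion part of $I$ and absorbing the torsion-free remainder into the annihilator of the quotient; this is arguably cleaner, since the subdirect decomposition of \cref{periodicreduction} only hands you a finite ideal of the periodic factor, and pulling it back to a finite ideal of $B$ forces one to pass to a torsion part anyway. Both routes rest on the same auxiliary fact, which you correctly isolate and prove (and which the paper also needs, implicitly, to finish \cref{BaerThm}): a \emph{surjective} skew brace homomorphism maps $\operatorname{Ann}_\alpha(B)$ into $\operatorname{Ann}_\alpha$ of the image for every ordinal $\alpha$, by transfinite induction from the case $\alpha=1$, where surjectivity is essential because the annihilator is defined by conditions quantified over all elements of the target.
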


\medskip

We end this section dealing with the radical theory of skew braces having property \PS. The radical theory of a skew brace has been introduced in \cite{MR4256133} as follows. Let $B$ be a skew brace. Then $\operatorname{Rad}(B)$ is defined as the intersection of all maximal ideals of $B$. Theorem 3.10 of that paper shows that if $B$ is {\it Artinian} (that is, $B$ satisfies the minimal condition on ideals), then $B/\operatorname{Rad}(B)$ is isomorphic to a direct product of finitely many simple skew braces. We can prove something similar in the universe of skew braces with property \PS. First, we need the following analogue of a classical argument of Remak. In what follows, ‘‘$\operatorname{Dr}$’’ denotes the restricted direct product.

\begin{lem}\label{remak}
Let $B$ be a skew brace with property \PS and assume $B=\operatorname{Dr}_{i\in\mathcal{I}}B_i$, where $B_i$ is a simple skew brace which is not the trivial brace. If $I$ is an ideal of $B$, then $I$ is the direct product of certain $B_i$.
\end{lem}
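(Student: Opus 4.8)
The plan is to adapt the classical Remak-type argument for direct decompositions of groups to this brace setting, exploiting that each $B_i$ is simple and non-trivial. The key structural fact I would use throughout is that for distinct indices $i\neq j$, the component $B_j$ acts trivially on $B_i$ in every relevant sense: since $B=\operatorname{Dr}_{i\in\mathcal I}B_i$ is a restricted direct product, elements of $B_i$ and $B_j$ commute additively and multiplicatively, and moreover $B_i\ast B_j=\{0\}=B_j\ast B_i$ because the star operation is computed componentwise (the star of elements lying in complementary direct factors vanishes). Thus for $x\in B_i$ and $y\in B_j$ with $i\neq j$, we have $x\ast y=y\ast x=0$ and $[x,y]_+=[x,y]_\circ=0$. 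This means $B_i$ is "annihilated" by the complementary factors, which is the precise brace analogue of the group-theoretic fact that distinct direct factors centralise each other.

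First I would fix an ideal $I$ of $B$ and, for each index $i\in\mathcal I$, consider the intersection $I\cap B_i$. Since $B_i$ is an ideal of $B$ (each direct factor of a restricted direct product is an ideal) and $I$ is an ideal, $I\cap B_i$ is an ideal of $B$ contained in $B_i$, hence it is an ideal of the simple skew brace $B_i$. By simplicity, either $I\cap B_i=\{0\}$ or $I\cap B_i=B_i$. Let $\mathcal J=\{i\in\mathcal I : B_i\subseteq I\}$ be the set of indices in the second case. I claim $I=\operatorname{Dr}_{i\in\mathcal J}B_i$. One inclusion is immediate: each $B_i$ with $i\in\mathcal J$ lies in $I$, and since these factors generate their restricted direct product inside $I$ (which is closed under both operations), $\operatorname{Dr}_{i\in\mathcal J}B_i\subseteq I$.

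The reverse inclusion is where I expect the main work, and it is the analogue of showing a normal subgroup containing no nontrivial piece of a given factor must project trivially onto that factor. Take $x\in I$ and write $x$ (using the direct product structure of $(B,+)$) as a finite sum $x=x_{i_1}+\cdots+x_{i_k}$ with $x_{i_t}\in B_{i_t}$ and the indices distinct. I would argue that each component $x_{i_t}$ already lies in $I$; since $x_{i_t}\in B_{i_t}$, this forces $I\cap B_{i_t}=B_{i_t}$, i.e. $i_t\in\mathcal J$, giving $x\in\operatorname{Dr}_{i\in\mathcal J}B_i$. To extract a single component into $I$, I would use that $I$ is an ideal together with the componentwise vanishing of $\ast$ and of commutators across distinct factors: for any $y\in B_{i_1}$, the element $y\ast x$ (or an appropriate combination of $y\ast x$, $[y,x]_+$, and conjugation of $x$ by $y$) collapses, by the complementary-factor relations above, to something supported only on coordinate $i_1$ and expressible through $x_{i_1}$. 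Concretely, since $y\ast x_{i_t}=0$ for $t\neq 1$, we get $y\ast x=y\ast x_{i_1}$, and as $I$ is an ideal it is $\ast$-absorbing, so $y\ast x_{i_1}=y\ast x\in I\cap B_{i_1}$; ranging $y$ over $B_{i_1}$ and using simplicity of $B_{i_1}$ together with non-triviality (so that $B_{i_1}^{(2)}\neq\{0\}$, hence the star action is genuinely nontrivial and the ideal it generates in $B_{i_1}$ is all of $B_{i_1}$) pins down $x_{i_1}\in I$.

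The main obstacle will be precisely this last step of isolating a single component: unlike in groups, where one conjugates to detect the projection, here one must juggle the additive commutator, the multiplicative commutator, and the two one-sided star operations simultaneously, and verify that their combined effect recovers the full factor $B_{i_1}$ rather than merely $B_{i_1}^{(2)}$ or some proper ideal. This is exactly where the hypothesis that $B_i$ is \emph{not} the trivial brace is essential, since for a trivial brace the star operation is identically zero and the argument would collapse; the non-triviality guarantees $B_{i_1}\ast B_{i_1}\neq\{0\}$, and simplicity then upgrades the ideal generated by the detected elements to all of $B_{i_1}$. Once each component is shown to lie in $I$, the identification $I=\operatorname{Dr}_{i\in\mathcal J}B_i$ follows, completing the proof. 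I would carry out the successive isolation of components $x_{i_2},\dots,x_{i_k}$ by induction, subtracting off $x_{i_1}\in I$ and repeating, which keeps the bookkeeping finite since each $x\in B$ has finite support.
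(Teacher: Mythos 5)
Your outline is the right one, and it in fact runs parallel to the paper's proof: both arguments come down to showing that an ideal meeting every factor trivially is forced into the annihilator, which is zero. However, the step you yourself flag as the main obstacle --- pinning down $x_{i_1}\in I$ --- is resolved incorrectly. The first problem is your claim that non-triviality of $B_{i_1}$ guarantees $B_{i_1}\ast B_{i_1}\neq\{0\}$. Under the paper's conventions, ``the trivial brace'' is the trivial skew brace of \emph{abelian} type, so the hypothesis does not exclude, for instance, the trivial skew brace on a non-abelian simple group: such a factor is simple and is not the trivial brace, yet its $\ast$ operation is identically zero. For such a factor any detection mechanism based on $\ast$ alone collapses (the lemma still holds for it, but via the additive and multiplicative commutators, using that a non-abelian simple group has trivial centre).

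The second, more serious problem is logical: even when $B_{i_1}^{(2)}\neq\{0\}$, your inference ``the star action is genuinely nontrivial, hence the ideal generated by the detected elements is all of $B_{i_1}$'' does not address the case in which \emph{all} detected elements vanish, i.e. $y\ast x_{i_1}=x_{i_1}\ast y=[y,x_{i_1}]_+=[y,x_{i_1}]_\circ=0$ for every $y\in B_{i_1}$. In that case there is nothing to generate, and the fact that $\ast$ is nontrivial \emph{somewhere} on $B_{i_1}$ says nothing about this particular element. The correct conclusion is that $x_{i_1}$, whose conjugates against the remaining factors vanish automatically by the direct-product structure, lies in $\Ann(B_{i_1})$; since $\Ann(B_{i_1})$ is an ideal of the simple skew brace $B_{i_1}$, and $\Ann(B_{i_1})=B_{i_1}$ would force $B_{i_1}$ to be a trivial brace, simplicity together with non-triviality gives $\Ann(B_{i_1})=\{0\}$, hence $x_{i_1}=0\in I$. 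This annihilator argument is exactly where the non-triviality hypothesis enters, and it is how the paper closes its (shorter, global) proof: after factoring out those $B_i$ contained in $I$, it observes that $[I,B_i]_+$, $[I,B_i]_\circ$, $I\ast B_i$ and $B_i\ast I$ all lie in $I\cap B_i=\{0\}$ for every $i$, and concludes $I\subseteq\Ann(B)=\{0\}$. With this repair (and noting that your other case --- some detected element nonzero --- needs only simplicity, not non-triviality), your componentwise argument does go through.
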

\begin{proof}
Clearly, we may factor out any $B_i$ contained in $I$, assuming therefore that $I\cap B_i=\{0\}$, for all $i$. Moreover, since none of the $B_i$ is the trivial brace, we have that $\Ann(B)=\{0\}$.

Now, $[I,B_i]_+\subseteq I\cap B_i=\{0\}$ and similarly, $[I,B_i]_\circ=I\ast B=B\ast I=\{0\}$. Thus $I\subseteq\Ann(B)=\{0\}$ and the statement is proved.
\end{proof}

\begin{thm}\label{radical}
Let $B$ be a skew brace with property \PS and $R=\operatorname{Rad}(B)$. Then $B/R$ can be embedded into the direct product $C\times D$, where 
\begin{itemize}
    \item[\textnormal{(1)}] $C$ is the restricted direct product of all simple homomorphic images of $B$ which are not trivial braces, and
    \item[\textnormal{(2)}]  $D$ is a trivial brace which is an unrestricted direct product of trivial braces of prime order.
\end{itemize}

Moreover, $B/A$ is isomorphic to $C$, where $A/R=\Ann(B/R)$.
\end{thm}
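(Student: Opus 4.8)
The plan is to analyse the maximal ideals of $B$ according to the type of their simple quotient. First I would partition the maximal ideals of $B$ into the family $\mathcal{M}_1$ of those $M$ with $B/M$ \emph{not} a trivial brace, and the family $\mathcal{M}_2$ of those with $B/M$ a trivial brace; since a simple trivial brace has no proper nonzero additive subgroup and trivial $\lambda$, it is a cyclic group of prime order, so the quotients $B/M$ with $M\in\mathcal{M}_2$ are exactly the trivial braces of prime order occurring as homomorphic images of $B$. Setting $R_1=\bigcap_{M\in\mathcal{M}_1}M$ and $R_2=\bigcap_{M\in\mathcal{M}_2}M$, we have $R=R_1\cap R_2$, so the diagonal map yields an embedding $B/R\hookrightarrow B/R_1\times B/R_2$. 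Because $B^{(2)}\subseteq M$ for every $M\in\mathcal{M}_2$, the quotient $B/R_2$ is a trivial brace, and the product map realises it as a sub-brace of the unrestricted direct product $D=\operatorname{Cr}_{M\in\mathcal{M}_2}B/M$ of trivial braces of prime order. Thus everything reduces to understanding $B/R_1$ and identifying it with $C$.

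Next I would pin down the annihilator, proving $A=R_1$, that is, $\Ann(B/R)=R_1/R$. For the inclusion $\Ann(B/R)\subseteq R_1/R$, observe that each $S=B/M$ with $M\in\mathcal{M}_1$ is a non-trivial simple brace, whence $\Ann(S)=0$ (an all-of-$S$ annihilator would force $\lambda$ trivial, both groups abelian and $S^{(2)}=0$); since a surjection carries annihilator into annihilator, the image of $\Ann(B/R)$ in each such $S$ is $0$, so $\Ann(B/R)\subseteq R_1/R$. For the reverse inclusion I would exploit that $B/R_2$ is a trivial brace: for $x\in R_1$ and $b\in B$ the elements $b\ast x$, $x\ast b$, $b+x-b$ and $b\circ x\circ b^{-1}$ all lie in $R_1$ (as $R_1$ is an ideal) and map to $0$ in the trivial brace $B/R_2$, hence lie in $R_1\cap R_2=R$; these four vanishings are precisely the conditions placing $x$ into $\Ann(B/R)$. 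Consequently $B/A=B/R_1$, and it remains to prove $B/R_1\cong C$, which will simultaneously supply the embedding $B/R\hookrightarrow C\times D$.

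So put $\bar B=B/R_1$. It has property \PS and, by the previous paragraph, $\Ann(\bar B)=0$; hence Theorem~\ref{thgzglf} shows $\bar B$ is locally finite. By construction $\bar B$ is a subdirect product of the simple non-trivial braces $S_i=\bar B/(M_i/R_1)$, $i\in\mathcal{M}_1$; distinct maximal ideals are comaximal, so $\bar B$ surjects onto every finite sub-product $\prod_{i\in F}S_i$, and each $S_i$ is finite. The decisive step, and the main obstacle, is to upgrade this subdirect embedding into an embedding into the \emph{restricted} direct product $\operatorname{Dr}_iS_i$. Here property \PS is essential: fix $x\in\bar B$ and suppose its projection $\rho_i(x)$ were nonzero for infinitely many $i$. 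Since $\Ann(S_i)=0$, each such nonzero $\rho_i(x)$ admits, for a suitable $s_i\in S_i$, a conjugate (one of $\rho_i(x)\ast s_i$, $s_i\ast\rho_i(x)$, $s_i+\rho_i(x)-s_i$, $s_i\circ\rho_i(x)\circ s_i^{-1}$) different from $\rho_i(x)$; passing to one of the four types occurring for infinitely many coordinates and using comaximality to realise the $s_i$ simultaneously on arbitrarily large finite coordinate sets, one produces infinitely many distinct conjugates of $x$ of that single type. This contradicts property \PS, so $x$ has finite support and $\bar B\hookrightarrow\operatorname{Dr}_iS_i$.

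Finally, with $\bar B$ sitting in $\operatorname{Dr}_iS_i$ as a subdirect product with $\Ann(\bar B)=0$, I would run a Remak/Goursat analysis for simple braces of the type recorded in Lemma~\ref{remak}: intersecting $\bar B$ with the factors and grouping coordinates linked by diagonal isomorphisms identifies $\bar B$ as a restricted direct product of diagonal simple sub-braces, each isomorphic to one of the $S_i$ and each a homomorphic image of $\bar B$ (via projection), hence of $B$. This gives $\bar B\cong C$, where $C$ is the restricted direct product of the simple non-trivial homomorphic images of $B$, and completes both the embedding $B/R\hookrightarrow C\times D$ and the isomorphism $B/A\cong C$. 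I expect the genuine difficulty to be concentrated in the finite-support argument of the third paragraph, where property \PS must be converted into a coordinate bound, and in the careful bookkeeping of redundant (diagonal) factors in this last step; the reductions of the first two paragraphs are essentially formal.
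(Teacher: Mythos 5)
Your proposal follows the same architecture as the paper's proof: the same partition of the maximal ideals into $\mathcal{M}_1$ and $\mathcal{M}_2$, the same identification of $\Ann(B/R)$ with $R_1/R$ (here you are in fact more complete than the paper, which explicitly proves only the inclusion $R_1/R\subseteq\Ann(B/R)$), and the same use of property \PS to force finite support, which is the paper's claim that the sets $\mathcal{J}_b$ are finite. Two caveats on your third paragraph: pairwise comaximality of distinct maximal ideals does \emph{not} by itself yield surjectivity onto finite sub-products (for skew braces, as for groups, comaximality is weaker than independence) --- what you need is the Remak-type argument for distinct maximal ideals with simple non-trivial quotients, i.e.\ Lemma~\ref{remak} and the argument of \cite[Theorem 3.10]{MR4256133} that the paper invokes; and your claim that each $S_i$ is finite is unjustified (simplicity plus \PS gives only local finiteness, via Theorem~\ref{thgzglf}), though you never use it.

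The genuine gap is in your final paragraph. Lemma~\ref{remak} describes the \emph{ideals} of a restricted direct product of simple non-trivial braces; it says nothing about subdirect sub-braces, so there is no recorded ``Remak/Goursat analysis'' to run, and the statement you need is not a formal consequence of what you have established: finite supports, distinct kernels and surjectivity onto all finite sub-products do \emph{not} imply $\bar{B}=\operatorname{Dr}_iS_i$. Indeed, $\big\{x\in\operatorname{Dr}_{i\in\mathbb{N}}\mathbb{Z}/p\mathbb{Z}\,:\,\textstyle\sum_ix_i=0\big\}$ is a proper subdirect sub-brace with finite supports, pairwise distinct maximal coordinate kernels, and surjective finite projections; only the hypothesis $\Ann(S_i)=0$, used once more at exactly this point, rules such examples out. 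Moreover, your tolerance of diagonal factors is inconsistent with the conclusion: a diagonal linking coordinates $i\neq j$ forces $\ker(\rho_i|_{\bar{B}})=\ker(\rho_j|_{\bar{B}})$, i.e.\ $M_i=M_j$, so diagonals are excluded by distinctness of the maximal ideals --- easy, but it must be said, since a product of diagonals would be missing copies of some $S_i$ and would not be the $C$ of the statement. What actually finishes the proof, and what your sketch never supplies, is the production for each $j$ of a nonzero element of $\bar{B}$ supported exactly at $\{j\}$: take $x\in\bar{B}$ with $\rho_j(x)\neq 0$; since $\Ann(S_j)=0$, choose $c\in S_j$ with $w(\rho_j(x),c)\neq 0$ for one of the operations $w(a,b)\in\{a\ast b,\ b\ast a,\ [a,b]_+,\ [a,b]_\circ\}$; choose $y\in\bar{B}$ with $\rho_j(y)=c$ and $\rho_i(y)=0$ for every other $i$ in the finite support of $x$. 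Since each of these four operations vanishes when either argument is $0$, the element $w(x,y)$ lies in $\bar{B}\cap S_j$ and is nonzero; then $\bar{B}\cap S_j$ is a nonzero ideal of $\bar{B}$ whose image under $\rho_j$ is a nonzero ideal of the simple brace $S_j$, whence $S_j\subseteq\bar{B}$ and $\bar{B}=\operatorname{Dr}_iS_i$. This support-collapsing construction is precisely the paper's proof that $\psi$ is surjective; it is the real content of the last step, not bookkeeping.
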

\begin{proof}
We may clearly assume $R=\{0\}$. Let $\mathcal{I}$ be the set of all maximal ideals of~$B$. Let $b\in B$ and let $\mathcal{J}_b$ be the subset of $\mathcal{I}$ made by all ideals $J$  such that $$b+J\notin\operatorname{Ann}(B/J)$$ and $B/J$ is not the trivial brace. We claim that $\mathcal{J}_b$ is finite. Suppose this is not the case. Since $B$ has property \PS, we have that the number of conjugates of $b$ in $B$ is finite, say $m_b$. Let $I_1,\ldots, I_{m_b}$ be distinct elements of $\mathcal{J}_b$ such that $$I_1\supset I_1\cap I_2\supset \ldots\supset I_1\cap\ldots\cap I_{m_b}=J_b$$ (the existence of these ideal is guaranteed by Lemma \ref{remak}). The argument employed in the proof of \cite[Theorem 3.10]{MR4256133} shows that the map $$\varphi_b:\, a+J_b\in B/J_b\mapsto \big(a+I_1,\ldots,a+I_{m_b}\big)\in B/I_1\times\ldots\times B/I_{m_b}$$ is an isomorphism. However, it is easy to see that the number of conjugates of~$\varphi_b(b)$ is at least $m_b+1$, a contradiction. Thus, $\mathcal{J}_b$ is finite.

Let $\mathcal{T}$ be the subset of $\mathcal{I}$ made by all maximal ideals $I$ of $B$ such that $B/I$ is a trivial brace. The previous claim shows that for each $b\in B$ there are finitely many elements $I$ in $\mathcal{I}\setminus\mathcal{T}$ such that $b\notin I$. Let $N$ be the intersection of all members of~$\mathcal{T}$, so $B/N$ is isomorphic to an unrestricted direct product of trivial braces of prime order. It follows that the map $$\varphi:\, b\in B\mapsto \big((b+I)_{I\in\mathcal{I}\setminus\mathcal{T}}, b+N\big)\in\underset{I\in \mathcal{I}\setminus{\mathcal{T}}}{\operatorname{Dr}}(B/I)\times B/N$$ is a monomorphism. 

Finally, let $M$ be the intersection of all ideals in $\mathcal{I}\setminus\mathcal{T}$. Since $M\subseteq\varphi^{-1}(B/N)$, it follows that $M$ is contained in $\Ann(B)$. If we let $$\psi:\, B\longrightarrow \underset{I\in \mathcal{I}\setminus{\mathcal{T}}}{\operatorname{Dr}}(B/I)$$ be the natural projection, we see that $M=\Ker(\psi)$. We need to show that $\psi$ is surjective.

Let $I\in\mathcal{I}\setminus\mathcal{T}$ and take $x\in B\setminus I$. Let $J_1,\ldots,J_n$ be the elements of $\mathcal{I}\setminus\mathcal{T}$ such that $J_1=I$. Since $B/J_1$ is a simple skew brace which is not the trivial brace, we can find $c\in B$ such that one of the elements $x\ast c,c\ast x,x^{\circ, c}, x^{+,c}$, call it $u$, is not contained in $J_1$. As we notice above, there is $y\in B$ such that $y+J_1=c+J_1$ and $y\in J_2\cap\ldots\cap J_n$. It follows that $u$ belongs to $N$ and to all elements of $\mathcal{I}\setminus\mathcal{T}$ except $J_1$. The ideal generated by $u+J_1$ in $B/J_1$ is $B/J_1$. As $I$ was arbitrary, shows that $\psi$ is surjective and completes the proof.
\end{proof}

\medskip

With respect to the previous result, we observe that a simple skew brace with property \PS must be locally finite (see Theorems \ref{theoperiodicfc} and \ref{corschurlf}).

\section{Skew braces with property \PBS}\label{bfc}

\noindent Recall that a group $G$ is a {\it $BFC$-group} if there is a positive integer $n$ such that every element of $G$ has at most $n$ conjugates in $G$. The main result of this section can be understood as a brace-theoretic analogue of a well-known result of B.H. Neumann stating that a group $G$ is a $BFC$-group if and only if $[G,G]$ is finite. 

\begin{defn}\label{defbfc}
An arbitrary skew brace has property \PBS if there is a positive integer~$n$ such that $$\operatorname{sup}\big\{|(B,+):\operatorname{Fix}^r(x)\cap C_x^+|, |(B,\circ):\operatorname{Fix}^l(x)\cap C_x^\circ|\,: x\in B\big\}\leq n.$$
\end{defn}

 Of course, every skew brace $B$ such that $|B/\operatorname{Ann}(B)|<\infty$ satisfies~\PBS, so the next result can also be seen as a generalisation of the first half of~\cite[Theorem~5.4]{MR4256133}. 
 

\begin{thm}\label{thm1}
Let $B$ be a skew brace. Then $B$ has property \PBS if and only if $B^{(2)}$ and $[B,B]_+$ are finite.
\end{thm}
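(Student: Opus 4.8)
The plan is to transfer the problem to the semidirect product $G=N\rtimes_\lambda X$, where $N=(B,+)$ and $X=(B,\circ)$, and to use the translation of $\ast$ into commutation given by Equation \eqref{equationfinale}. A direct computation in $G$ yields the conjugation formulas
\[
\begin{gathered}
(c,d)(a,0)(c,d)^{-1}=\bigl(c+\lambda_d(a)-c,\,0\bigr),\\
(c,d)(0,b)(c,d)^{-1}=\bigl(c-\lambda_e(c),\,d\circ b\circ d^{-1}\bigr),\quad e=d\circ b\circ d^{-1},
\end{gathered}
\]
while $[(a,0),(c,0)]=\bigl([a,c]_+,0\bigr)$ and, by Equation \eqref{equationfinale}, $[N,X]=\{(w,0):w\in B^{(2)}\}$. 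Thus both $[B,B]_+$ and $B^{(2)}$ sit inside $[G,G]\cap N$. The strategy is then to show that property \PBS is equivalent to $G$ being a $BFC$-group, so that the Neumann result recalled above (a $BFC$-group has finite commutator subgroup) delivers the finiteness of $B^{(2)}$ and $[B,B]_+$ simultaneously.

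For the forward implication I would assume \PBS with bound $n$ and count conjugates. By the first formula the conjugates of $(a,0)$ are the values of $c+\lambda_d(a)-c$; since $\lambda_d(a)=d\ast a+a$ takes at most $|(B,\circ):\Fix^l(a)|\le n$ values, and for each such value $v$ the additive class $\{c+v-c:c\in B\}$ has at most $|(B,+):C_v^+|\le n$ elements, the element $(a,0)$ has at most $n^2$ conjugates. An analogous count based on the second formula, bounding the multiplicative conjugates $d\circ b\circ d^{-1}$ by $|(B,\circ):C_b^\circ|\le n$ and $c-\lambda_e(c)=-(e\ast c)$ by $|(B,+):\Fix^r(e)|\le n$, bounds the number of conjugates of $(0,b)$ by $n^2$. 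Since $G=NX$ and $C_G(g)\cap C_G(h)\le C_G(gh)$ gives $|G:C_G(gh)|\le|G:C_G(g)|\,|G:C_G(h)|$, every element of $G$ is a product of one element of $N$ and one of $X$ and so has at most $n^4$ conjugates; hence $G$ is a $BFC$-group and $[G,G]$, and with it $B^{(2)}$ and $[B,B]_+$, is finite.

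For the converse, suppose $B^{(2)}$ and $[B,B]_+$ are finite. As $x\ast b,\,b\ast x\in B^{(2)}$ for all $b$, the sets $\{x\ast b:b\in B\}$ and $\{b\ast x:b\in B\}$ lie in $B^{(2)}$, so $|(B,+):\Fix^r(x)|$ and $|(B,\circ):\Fix^l(x)|$ are at most $|B^{(2)}|$; likewise $c+x-c=[c,x]_++x$ gives $|(B,+):C_x^+|\le|[B,B]_+|$. The one remaining index, $|(B,\circ):C_x^\circ|$, requires knowing that $[B,B]_\circ$ is finite. Here the key point is that $B/B^{(2)}$ is a trivial skew brace, so in it the two operations coincide and the images of $[B,B]_\circ$ and $[B,B]_+$ agree; the latter is finite, whence the image of $[B,B]_\circ$ in $B/B^{(2)}$ is finite, and combined with the finiteness of $B^{(2)}$ this forces $[B,B]_\circ$ to be finite. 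Then $c\circ x\circ c^{-1}=[c,x]_\circ\circ x$ gives $|(B,\circ):C_x^\circ|\le|[B,B]_\circ|$, and intersecting the two pairs of finite-index subgroups produces the uniform bound $|B^{(2)}|\cdot\max\{|[B,B]_+|,|[B,B]_\circ|\}$ required by \PBS.

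I expect the main obstacle to be the forward direction, namely verifying that \PBS forces $G$ to be a genuine $BFC$-group and not merely an $FC$-group; this is exactly the uniform conjugacy-class bookkeeping carried out through the explicit conjugation formulas, after which Neumann's theorem does the rest. On the converse side the only non-formal step is the finiteness of $[B,B]_\circ$: this does not follow from $[B,B]_+$ and $B^{(2)}$ by abstract group theory and genuinely uses the skew brace structure through the trivial quotient $B/B^{(2)}$.
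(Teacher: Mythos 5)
Your proof is correct and takes essentially the same route as the paper's: pass to the semidirect product $G=N\rtimes_\lambda X$, show that property \PBS makes $G$ a $BFC$-group via the same $n^2$-then-$n^4$ conjugate counts, invoke Neumann's theorem to get $[G,G]$ finite, and read off $[B,B]_+$ and $B^{(2)}$ inside $[G,G]\cap N$ using Equation \eqref{equationfinale}. The only difference is one of detail, not of method: you make explicit the conjugation formulas and, for the converse, the finiteness of $[B,B]_\circ$ via the trivial quotient $B/B^{(2)}$, both of which the paper leaves implicit (its converse is a one-line appeal to the remark that finiteness of $B^{(2)}$ and $[B,B]_+$ forces $[B,B]_\circ$ finite).
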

\begin{proof}
If $B^{(2)}$ and $[B,B]_+$ are finite, then also $[B,B]_\circ$ is finite and the statement is proved. Assume conversely that $B$ has property \PBS.  Therefore there is a positive integer $n$ such that $$\operatorname{sup}\big\{|(B,+):\operatorname{Fix}^r(x)\cap C_x^+|, |(B,\circ):\operatorname{Fix}^l(x)\cap C_x^\circ|\,: x\in B\big\}\leq n.$$ Let $G=N\rtimes_\lambda X$, where $N=(B,+)$ and $X=(B,\circ)$. We claim that $G$ is a $BFC$-group. Let $g\in N$. Then $g$ has at most $n$ conjugates in $N$. It follows from Equation \eqref{equationfinale} that every element of $N$ has at most $n$ conjugates under the action of $X$, which means that every element of $N$ has at most $n^2$ conjugates in $G$. Similarly, any element of $X$ has at most $n^2$ conjugates in $G$. Finally, let $g$ be an arbitrary element of $G$. Then we can write $g=g_1g_2$, where $g_1 = (0,x)$, $g_2=(y,0)$ $x,y \in B$. Since $$|G:C_G(g_1)\cap C_G(g_2)|\leq |G:C_G(g)|,$$ it follows that $|G:C_G(g)|\leq n^4$ and the claim is proved. 

Now, by Neumann's theorem, we have that $[G,G]$ is finite; in particular $[B,B]_+$ is finite. The fact that $B^{(2)}$ is finite can be deduced from Equation~\eqref{equationfinale}. The statement is proved.
\end{proof}

\medskip

\medskip

Finally, we show how skew braces with property \PBS come into play in the context of the YBE. Let $(X,r)$ be a set-theoretic non-degenerate solution of the YBE. Recall first that the {\it \textnormal(left\textnormal) derived} solution $(X,r')$ of $(X,r)$ is the non-degenerate solution defined as follows: $r'(x,y)=(y,\eta_y(x))$, where $$\eta_y(x)=\sigma_y\big(\tau_{\sigma_x^{-1}(y)}(x)\big).$$ It is easy to see that the structure group $G(X,r')$ coincides with $A(X,r)$ (see \cite[Theorem 2.4]{ESG01}).

\begin{defn}\label{defderivedindecomposable}
    A non-degenerate solution $(X,r)$ of the YBE is said to be \emph{derived-indecomposable} if its derived solution $(X,r')$ is indecomposable.  
\end{defn}

One of the main objects controlling the decomposability of a non-degenerate solution $(X,r)$ is its \emph{permutation group}, \hbox{i.e.} the subgroup $\mathcal{G}(X,r)=\langle\sigma_x,\tau_y\,\colon\, x,y\in X\rangle$ of $\operatorname{Sym}(X)$. Indeed, by \cite[Proposition 6.6]{Cedo}, $(X,r)$ is indecomposable if and only if $\mathcal{G}(X,r)$ acts transitively on $X$. Moreover, by the proof  of \cite[Theorem 2.2]{CaMaSt22}, one obtains that the action of the permutation group is the same as the action of the group $\langle\sigma_x,\eta_x\,\colon\, x\in X\rangle$. Thus, if $(X,r')$ is indecomposable, then $\mathcal{G}(X,r')=\langle \eta_x \colon x\in X\rangle$ is transitive, so that $\langle\sigma_x,\eta_x\,\colon\, x\in X\rangle$ is transitive and hence $(X,r)$ is indecomposable. More specifically, there exists a unique derived-indecomposable involutive solution, namely the involutive solution defined on the set of one element, but
among non-involutive ones there are large classes of solutions which are derived-indecomposable (see for instance \cite[Appendix A]{LeVe19}). However, even in the non-involutive case, indecomposable does not imply derived-indecomposable, as shown by the following example. Let $X=\{1,2,3,4\}$ and define
\begin{align*}
\sigma_x=\begin{cases}(1\ 4\ 2 \ 3) \text{ if } x=1,2\\ 
(1\ 3 \ 2 \ 4) \text{ if } x =3,4 \qquad \text{and}\qquad \tau_x= (1\ 3 )(2\ 4).
\end{cases}\end{align*}
It follows that $r(x,y) = (\sigma_x(y),\rho_y(x))$ is an indecomposable solution but its derived solution defined by $$\eta_x(y)=\begin{cases}
    (3\ 4) \text{ if } x=1,2\\ (1\ 2) \text{ if } x=3,4
\end{cases}$$
is decomposable with orbits $\{1,2\}$ and $\{3, 4\}$.

\smallskip

Similarly to Corollary \ref{corinvolutive}, in the case of skew braces with property \PS, the converse holds.

\begin{thm}\label{indecomposableprima}
Let $(X,r)$ be a non-degenerate finite solution of the YBE such that $G(X,r)$ has property \PS. Then $(X,r)$ is indecomposable if and only if $(X,r)$ is derived-indecomposable.
\end{thm}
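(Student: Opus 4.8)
The implication ``derived-indecomposable $\Rightarrow$ indecomposable'' has already been established in the discussion preceding the statement, and it requires no hypothesis on $G(X,r)$; so the plan is to prove the converse. Write $B=G(X,r)$ and identify $X$ with its image in $B$. Recall that $\mathcal{G}(X,r)=\langle\sigma_x,\eta_x\rangle$ and $\mathcal{G}(X,r')=\langle\eta_x\rangle$, and that by \cite[Proposition 6.6]{Cedo} indecomposability of $(X,r)$ (resp.\ of $(X,r')$) is equivalent to transitivity of $\mathcal{G}(X,r)$ (resp.\ of $\langle\eta_x\rangle$) on $X$. The first step is to read the maps $\eta_y$ in brace-theoretic terms: using $\sigma_x(y)=\lambda_x(y)$ and $\tau_y(x)=\lambda_x(y)^{-1}\circ x\circ y$ together with $a\circ b=a+\lambda_a(b)$ and $-a=\lambda_a(a^{-1})$, a direct computation gives $\eta_y(x)=-y+x+y$ for all $x,y\in X$. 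Thus $\eta_y$ is conjugation by $y$ in $(B,+)$.

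Several structural facts follow. Since $X$ generates $(B,+)$ and $\eta_{y_1}\circ\eta_{y_2}=\eta_{y_2+y_1}$ on $X$, the set $X$ is invariant under conjugation by every element of $(B,+)$, and the orbits of $\langle\eta_x\rangle$ on $X$ are exactly the conjugacy classes of $(B,+)$ contained in $X$; hence $(X,r)$ is derived-indecomposable if and only if $X$ is a single such conjugacy class. Moreover $\lambda_x\eta_y\lambda_x^{-1}=\eta_{\lambda_x(y)}=\eta_{\sigma_x(y)}$, so each $\sigma_x$ normalises $\langle\eta_x\rangle$; consequently $\langle\eta_x\rangle\trianglelefteq\mathcal{G}(X,r)$ and the additive conjugacy classes form a block system. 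As $\mathcal{G}(X,r)$ is transitive, these (say $m$) classes are permuted transitively by the $\sigma_x=\lambda_x$, so they all have the same cardinality. It remains to prove $m=1$.

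Here property \PS enters. Since $X$ is finite, $B$ is finitely generated, so \cref{lemma1} gives that $B/\Ann(B)$ is finite, whence $B^{(2)}$ is finite by \cref{schurlf}; likewise the finitely generated $FC$-group $(B,+)$ is finite over its centre and therefore has finite commutator subgroup $[(B,+),(B,+)]$. For $g\in B$ and $x\in X$ one has $\lambda_g(x)-x=g\ast x\in B^{(2)}$, so $\lambda_g$ fixes every coset of $B^{(2)}$, while $\eta_y(x)-x$ lies in $[(B,+),(B,+)]$. Letting $M$ be the ideal generated by $B^{(2)}$ and $[(B,+),(B,+)]$ — a finite ideal, by Dietzmann's lemma applied to this finite set of periodic elements — the whole $\mathcal{G}(X,r)$-orbit of a point, which by indecomposability is all of $X$, is contained in a single coset $x_0+M$. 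Consequently $(B,+)/M$ is cyclic and $B/M$ is a cyclic trivial brace.

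The final and most delicate step is to upgrade ``$X$ lies in one coset of the finite ideal $M$'' to ``$X$ is a single conjugacy class''. The plan is to exploit the torsion theory developed above. By \cref{periodicreduction}, $B$ embeds subdirectly into $B/T_+(B)\times B/F$, where $B/T_+(B)$ is a trivial brace by \cref{torsionfreeskewlefttrivial} and $B/F$ is periodic, hence locally finite by \cref{periodicimplieslocallyfinite}. In $B/T_+(B)$ the map $\lambda$ is trivial and the additive group is abelian, so its conjugacy classes are singletons; since $\lambda(B)$ permutes the $m$ classes transitively, their images in $B/T_+(B)$ all coincide, forcing $X$ into a single coset of $T_+(B)$ and pushing the entire separation into the periodic part $B/F$. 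Working inside the finite image of $B$ there and using that it is again a homomorphic image of the \emph{structure} skew brace, one argues that a $\lambda$-transitive generating set cannot split into more than one additive conjugacy class, i.e.\ $m=1$, so $\langle\eta_x\rangle$ is already transitive and $(X,r)$ is derived-indecomposable. I expect this last rigidity in the periodic part to be the true obstacle: the example in the excerpt shows that \emph{some} hypothesis is indispensable, and the $\mathbb{Z}/4\mathbb{Z}$-type braces show that for an arbitrary finite brace a $\lambda$-transitive generating set may genuinely split into several conjugacy classes, so the argument must use not merely the finiteness granted by \PS but also the fact that $B$ is the structure skew brace of $(X,r)$.
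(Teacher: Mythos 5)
Your reduction and the first two-thirds of the argument are sound: the computation $\eta_y(x)=-y+x+y$ is correct, and so is the consequence that, for an \emph{injective} solution, derived-indecomposability amounts to $X$ being a union of complete additive conjugacy classes with $\langle\eta_x\rangle$ acting transitively; likewise, property \PS together with \cref{lemma1} and \cref{schurlf} does produce a finite ideal $M$ containing $B^{(2)}$ and $[B,B]_+$ with $X$ contained in a single coset of $M$. But the proof breaks exactly where you say it does: the assertion that ``a $\lambda$-transitive generating set cannot split into more than one additive conjugacy class'' in the periodic part is never proved --- it is the theorem itself, restated inside a finite quotient. Nothing in the finiteness data you assembled rules out $m>1$, and, as you yourself observe, for an abstract finite skew brace such splitting genuinely occurs; so the required rigidity must come from input specific to structure skew braces which your argument never actually brings into play. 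There is also a secondary unaddressed point: you identify $X$ with $\iota(X)\subseteq G(X,r)$, but finite non-degenerate solutions need not be injective (the paper exhibits one), and transitivity of $\langle\eta_x\rangle$ on $\iota(X)$ does not yield transitivity on $X$, since distinct orbits may fuse under $\iota$.

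The paper closes this gap with an external characterisation that your plan avoids but ultimately needs: by \cite[Proposition 7.3]{LeVe19}, a finite non-degenerate solution $(Y,s)$ is indecomposable if and only if $G(Y,s)/T$ is infinite cyclic, where $T$ is the set of periodic elements. Property \PS enters only through \cref{corschurlf}: $G(X,r)\ast G(X,r)$ is periodic, so $a\circ b$ and $a+b$ agree modulo $T$, whence the multiplicative and additive groups of the structure skew brace have the same quotient modulo torsion. Since $A(X,r)=G(X,r')$, indecomposability of $(X,r)$ gives that $G(X,r')/T$ is infinite cyclic, and a second application of the same proposition yields indecomposability of $(X,r')$. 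This formulation is intrinsic to the solution (no injectivity is needed) and replaces your orbit-splitting analysis by the torsion-free rank of the structure group --- which is precisely the invariant your coset argument ($X\subseteq x_0+M$, hence $(B,+)/M$ cyclic) was circling around without being able to exploit. If you wish to salvage a self-contained version of your approach, be aware that the missing step is essentially of the same strength as that proposition.
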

\begin{proof}
By our remark above, we only need to show that if $(X,r)$ is indecomposable, then $(X,r')$ is such. Therefore assume $(X,r)$ is indecomposable. It follows from \cite[Proposition 7.3]{LeVe19} that $G(X,r)/T$ is infinite cyclic, where $T$ is the subgroup of all periodic elements of $G(X,r)$. But $G(X,r)*G(X,r)$ is periodic by Corollary~\ref{corschurlf}, so also $A(X,r)/T$ is infinite cyclic. But $A(X,r) = G(X,r')$, so $G(X,r')/T$ is infinite cyclic. A further application of \cite[Proposition 7.3]{LeVe19} shows that $(X,r')$ is indecomposable.
\end{proof}

\medskip

Our last result shows that the derived-indecomposability has a very strong impact on the behaviour of the structure skew brace. 

\begin{thm}\label{finalthmBrace}
Let $(X,r)$ be a non-degenerate finite solution of the YBE. If $(X,r)$ is derived-indecomposable, then the  skew brace $G(X,r)$ has property \PBS.
\end{thm}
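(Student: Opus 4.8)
The plan is to connect derived-indecomposability of $(X,r)$ to a structural bound on the additive group $A(X,r)$ of the structure skew brace, and then transfer this bound through Theorem~\ref{thm1}. Recall that $A(X,r) = G(X,r')$ is the structure group of the derived solution $(X,r')$. Since $(X,r)$ is derived-indecomposable, the solution $(X,r')$ is indecomposable, and $(X,r')$ is the \emph{derived} solution, which is always a brace-type (involutive) solution: the derived solution of any non-degenerate solution is itself non-degenerate with the additive group playing the role of a trivial-brace structure. The key point is that for indecomposable involutive-type solutions one has much stronger finiteness: I would invoke \cite[Proposition 7.3]{LeVe19} (as in the proof of Theorem~\ref{indecomposableprima}) to conclude that $G(X,r')/T'$ is infinite cyclic, where $T'$ is the torsion subgroup of $A(X,r)$. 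Since $X$ is finite, the groups are finitely generated, and an indecomposable such solution forces the torsion part to be tightly controlled.

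First I would establish that $[B,B]_+$ is finite, where $B = G(X,r)$. The additive group $(B,+) = A(X,r)$ is finitely generated (as $X$ is finite) and, by \cite[Theorem 2.7]{structureMonoid} cited in the introduction, is finite over its centre; hence by Schur's theorem its commutator subgroup $[B,B]_+$ is finite. This already gives finiteness of $[A(X,r),A(X,r)]$ unconditionally, i.e.\ without using indecomposability.

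Next I would use derived-indecomposability to control $B^{(2)} = B\ast B$. Here the mechanism mirrors the proof of Theorem~\ref{indecomposableprima}: indecomposability of $(X,r)$ (which is implied by, and under property \PS equivalent to, derived-indecomposability) forces the permutation group $\mathcal{G}(X,r')=\langle\eta_x : x\in X\rangle$ to act transitively on $X$. Transitivity, combined with the relation $x\ast y = \lambda_x(y)-y$ and the fact that $B^{(2)}$ is generated by the $x\ast y$, should pin $B^{(2)}$ down to a bounded object. Concretely, the abstract statement of the abstract says that for every $b$ there are \emph{finitely many} elements of the form $b\ast c$ and $c\ast b$; promoting ``finitely many for each $b$'' to ``finite in total'' is exactly what transitivity of the permutation action buys, since the orbit structure makes the collection of maps $\lambda_x$ finite up to the finite-over-centre ambiguity already handled in the previous step. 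This yields $|B^{(2)}|<\infty$.

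Having shown both $B^{(2)}$ and $[B,B]_+$ are finite, I would conclude immediately by Theorem~\ref{thm1}, which states that $B$ has property \PBS if and only if $B^{(2)}$ and $[B,B]_+$ are finite. The main obstacle I anticipate is the second step: carefully deducing \emph{global} finiteness of $B^{(2)}$ from the transitivity of the derived permutation group, rather than merely the per-element finiteness furnished by property \PS. The cleanest route is probably to show directly that the finitely generated abelian-by-finite additive group $A(X,r)$, together with the transitive action forcing $A(X,r)/T'$ to be cyclic (via \cite[Proposition 7.3]{LeVe19}), leaves only the finite torsion $T'$ to absorb all the $\ast$-products; one then checks $B\ast B\subseteq T'$ using that $\lambda$ acts trivially modulo torsion on the infinite-cyclic quotient, whence $B^{(2)}$ is finite.
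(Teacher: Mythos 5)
Your overall architecture is sound and, in the key step, genuinely different from the paper's proof. The paper obtains finiteness of $B^{(2)}=G(X,r)\ast G(X,r)$ by a direct citation of \cite[Theorem~6.2]{MR4256133}, then, exactly as you do, gets finiteness of $[B,B]_+$ from the fact that the additive group is finite over its centre (adapting \cite[Theorem~2.7]{structureMonoid}) together with Schur's theorem, and concludes by Theorem~\ref{thm1}. Your attempt to replace that black-box citation by a self-contained argument via \cite[Proposition~7.3]{LeVe19} is a legitimate alternative, and your reduction is correct as far as it goes: since $(X,r')$ is indecomposable and $G(X,r')=A(X,r)$, the quotient $A(X,r)/T'$ is infinite cyclic, and $T'$ is indeed finite (because $[A,A]_+$ is finite, $A/[A,A]_+$ is a finitely generated abelian group, and $T'$ is the preimage of its torsion subgroup).

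However, the pivotal claim --- that every $\lambda_a$ ``acts trivially modulo torsion on the infinite-cyclic quotient'', so that $B\ast B\subseteq T'$ --- is asserted, not proved, and it does not follow from anything you have set up. An automorphism of an infinite cyclic group can be inversion, and this cannot be excluded on abstract group-theoretic grounds: the brace with additive group $\mathbb{Z}$ and multiplication $a\circ b=a+(-1)^a b$ has a finitely generated, torsion-free additive group (so $A/T'$ is infinite cyclic and $A$ is trivially finite over its centre), yet $\lambda_a(b)=(-1)^a b$, so $B\ast B=2\mathbb{Z}$ is infinite. Thus your step is exactly the point where structure-skew-brace-specific input must enter, and it can be filled as follows: in $G(X,r)$ one has $\lambda_{\iota(x)}(\iota(y))=\iota(\sigma_x(y))$, so every $\lambda_a$ (with $a$ a $\circ$-word in the $\iota(x)^{\pm1}$) permutes the finite set $\iota(X)$; hence every $\lambda_a$ preserves the degree homomorphism $\deg\colon A(X,r)\to\mathbb{Z}$ sending each generator to $1$ (well defined since the defining relations of $A(X,r)$ are homogeneous). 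Since $T'\subseteq\Ker(\deg)$ and $A/T'$ is infinite cyclic, the induced surjection $A/T'\to\mathbb{Z}$ is an isomorphism, so $\Ker(\deg)=T'$ and therefore $a\ast b=\lambda_a(b)-b\in T'$ for all $a,b\in B$, giving $|B^{(2)}|<\infty$. With this inserted your proof closes correctly via Theorem~\ref{thm1}. One minor, non-load-bearing error: the derived solution $(X,r')$ is \emph{not} involutive in general (the paper notes the only derived-indecomposable involutive solution is the one-element one); fortunately \cite[Proposition~7.3]{LeVe19} applies to arbitrary finite non-degenerate solutions, so your use of it survives this slip.
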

\begin{proof}
Let $A$ be the additive group of skew brace $G(X,r)$. It follows from \cite[The\-o\-rem~6.2]{MR4256133} that $G(X,r)\ast G(X,r)$ is finite. On the other hand, $A$ is finite over its centre (this is seen by adapting the proof of \cite[Theorem~2.7]{structureMonoid}) yields that $A$ is finite over its centre, so Schur's theorem gives that $[A,A]_+$ is finite. Then $G(X,r)$ has property \PBS by Theorem \ref{thm1}. 
\end{proof}

\section*{Acknowledgments}

The authors are members of the non-profit association ``Advances in Group Theory and Applications'' (www.advgrouptheory.com). Colazzo is supported by the Engineering and Physical Sciences Research Council [grant number EP/V005995/1]. Ferrara and Trombetti are supported by GNSAGA (INdAM). Ferrara is also supported by a grant from the University of Campania ``Luigi Vanvitelli'', in the framework of the projects GoAL (V:ALERE 2019) and HELM (V:ALERE 2020).

We thank the referees for their useful comments and suggestions. In particular, we are indebted to the referees for Lemma~\ref{remref2}, which showed that the ‘‘periodicity’’ assumption from Theorem~\ref{annsubgroup} could be dropped out.

We thank L. Vendramin for the useful discussion and comments on an earlier version of the paper.

\section{Statements and Declarations}

\noindent{\bf Conflict of interest}\quad The authors declare that they have no conflict of interest.

\smallskip

\noindent{\bf Data availability}\quad Data sharing is not applicable to this article as no new data were created or analysed in this study.

\bibliographystyle{abbrv}
\bibliography{refs}
\end{document}